\newtheorem{Thm}{Theorem}
\newtheorem{Prop}[Thm]{Proposition}
\newtheorem{Def}[Thm]{Definition}
\newtheorem{Def/Thm}[Thm]{Definition/Theorem}
\newtheorem{Cor}[Thm]{Corollary}
\newtheorem{Lemma}[Thm]{Lemma}
\theoremstyle{remark}
\newcommand{\gG}{{\mathbf{g}}}
\newcommand{\lann}{\langle\langle}
\newcommand{\rann}{\rangle\rangle}
\newcommand{\PP }{{\mathbb P}}
\newcommand{\QQ }{{\mathbb Q}}
\newcommand{\CC }{{\mathbb C}}
\newcommand{\ZZ }{{\mathbb Z}}
\newcommand{\DD}{\mathsf{D}}
\newcommand{\T}{{\mathsf{T}}}
\newcommand{\lan}{\langle}
\newcommand{\ran}{\rangle}
\def \proj {{\mathbb{P}}}
\newcommand{\com}{{\mathbb{C}}}
\newcommand{\psit}{\theta}
\newcommand{\Psit}{\Theta}
\begin{document}

\title[Crepant resolution
and holomorphic anomaly]{Crepant resolution 
and the holomorphic anomaly equation for $[\CC^3/\ZZ_3]$}

\author{Hyenho Lho}
\address{Department of Mathematics, ETH Z\"urich}
\email {hyenho.lho@math.ethz.ch}
\author{Rahul Pandharipande}
\address{Department of Mathematics, ETH Z\"urich}
\email {rahul@math.ethz.ch}
\date{February 2019, \ \  MSC2010: 14H10, 53D45}

\begin{abstract}
 We study  the orbifold Gromov-Witten theory 
 of the quotient $[\CC^3/\ZZ_3]$ 
 in all genera. Our first result is a proof of the holomorphic anomaly equations in the precise form predicted by $B$-model physics. 
 Our second result is an exact 
 crepant resolution correspondence
 relating the Gromov-Witten theories
 of $[\CC^3/\ZZ_3]$ and local $\PP^2$.
 The proof of the correspondence
 requires an identity proven
 in the Appendix by T. Coates and
 H. Iritani.
 
\end{abstract}

\maketitle

\setcounter{tocdepth}{1} 
\tableofcontents

 \pagestyle{plain}

\setcounter{section}{-1}

\section{Introduction}

\subsection{Overview}
Let $\ZZ_3$ be the cyclic group of order
3 with generator $\omega$.
Let $\ZZ_3$ act on $\CC^3$ by
\begin{equation}
    \label{mmmm}
\omega \, \mapsto\,  \left( \begin{matrix} e^{\frac{2\pi i}{3}} & 0 & 0 \\
0&  e^{\frac{2\pi i}{3}} & 0\\
0 & 0 & e^{\frac{2\pi i}{3}} \end{matrix}\right)\, .
\end{equation}
The central object of our paper is
the orbifold (or Deligne-Mumford stack)
quotient $[\CC^3/\ZZ_3]$. 
The constant holomorphic 3-form of $\CC^3$
descends to $\CC^3/\ZZ_3$ by
the specific choice of the $\ZZ_3$-action \eqref{mmmm}.
We refer
the reader to \cite{Adem,ChenR} for an introduction
to orbifolds
and orbifold cohomology.

Viewed as a singular scheme, the quotient
$\CC^3/\ZZ_3$ admits a crepant resolution
in the category of schemes
by the total space $K\PP^2$ of the canonical bundle of $\PP^2$, 
\begin{equation}\label{ddxx}
K\PP^2 \rightarrow \CC^3/\ZZ_3\, .
\end{equation}
Since $[\CC^3/\ZZ_3]$ is  nonsingular
as an
orbifold, the morphism
$$[\CC^3/\ZZ_3] \rightarrow \CC^3/\ZZ_3\, $$
may be viewed as a crepant resolution in the category
of orbifolds.

Our study of the orbifold Gromov-Witten
theory of $[\CC^3/\ZZ_3]$ in all
genera yields two main results:
\begin{enumerate}
    \item [(i)] We prove
    the holomorphic
    anomaly equations for 
    $[\CC^3/\ZZ_3]$ in the precise
    form predicted by $B$-model physics \cite{ASYZ}.
    \item[(ii)] We prove an
    exact crepant resolution correspondence
    in all genera relating
    the Gromov-Witten theories of $K\PP^2$
    and
    $[\CC^3/\ZZ_3]$.
\end{enumerate}
For (i), our approach follows the path of the
higher genus study in \cite{LP,LP2}.
For (ii), our correspondence is
simple, explicit, and carries no
unevaluated{\footnote{The
general statement of the crepant resolution correspondence
(see \cite[Conjecture 4.1]{CR})
asserts an equivalence up to {\em a choice of analytic continuation}
which is not explicitly specified.}}
analytic continuation.

\subsection{Crepant resolutions}
Following Ruan \cite{Ruan}, 
Bryan-Graber \cite{BT}, 
and, especially for $\CC^3/\ZZ_3$,
Coates-Iritani-Tseng \cite{CIT}, 
the relationship between the Gromov-Witten theories of  
scheme and orbifold crepant
resolutions has been studied for more than
a decade.
In many cases where the
exceptional locus of the resolution
is of dimension 1, a crepant
resolution correspondence is proven
by matching closed
form calculations of the two sides.{\footnote{See, for example,  \cite{BG, BGP, WKC,CCIT,Ross}.}}
However, for the quotient $\CC^3/\ZZ_3$,
the resolution \eqref{ddxx} has
exceptional locus $\PP^2$ of dimension 2, 
and closed forms are not available.

Our correspondence is proven instead
by Cohomological Field Theory (CohFT) techniques \cite{SS,Te}. The crepant
resolution correspondence of \cite{PHH}
for
$$\text{Hilb}(\CC^2,n) \rightarrow
(\CC^2)^n/\Sigma_n$$
is another recent application of
the CohFT perspective. While
the statements of the correspondences
for  $\CC^3/\ZZ_3$
and $(\CC^2)^n/\Sigma_n$
have no unevaluated
analytic continuations,
the proofs both require delicate
identities governing the
changes of variables.  In
the latter case, the result
required is related to the analytic
continuation of the quantum
differential equation 
studied in \cite{OPA}. For $\CC^3/\ZZ_3$,
we require here 
identities proven in the Appendix
by T. Coates and H. Iritani.

\subsection{Orbifold cohomology}
The diagonal $\mathsf{T}=(\com^*)^3$ action on $\CC^3$
descends to the orbifold $[\CC^3/\ZZ_3]$.
The $\mathsf{T}$-equivariant {\em Chen-Ruan orbifold cohomology}
$H^*_{\mathsf{T},\mathsf{orb}}([\CC^3/\ZZ_3])$ has three
canonical elements{\footnote{Cohomology will always
be taken here with $\mathbb{C}$-coefficients.}},
\begin{eqnarray*}
\mathsf{1}=\phi_0 &\in& H^0_{\mathsf{T},\mathsf{orb}}([\CC^3/\ZZ_3])\,, \\
\phi_1 & \in & 
H^2_{\mathsf{T},\mathsf{orb}}([\CC^3/\ZZ_3])\, , \\
\phi_2 & \in & 
H^4_{\mathsf{T},\mathsf{orb}}([\CC^3/\ZZ_3])\, ,
\end{eqnarray*}
which span a basis.
The classes $\phi_1$ and $\phi_2$
correspond (on the
inertial stack) respectively
to the two stabilizers $\omega$ and
$\omega^2\in \ZZ_3$ 
of the fixed point $0\in \CC^3$.

Let $\lambda_0,\lambda_1,\lambda_2$ denote the 3
weights of the representations of $\mathsf{T}$
on the 3 factors of $\CC^3$.
The pairing matrix for
$H^*_{\mathsf{T},\mathsf{orb}}([\CC^3/\ZZ_3])$
in the basis $\phi_0,\phi_1,\phi_2$ is
defined by residues with respect to 
localization by $\mathsf{T}$:
\begin{equation}
\label{IPP}
   \frac{1}{3}
  \left[ {\begin{array}{ccc}
   \frac{1}{\lambda_0\lambda_1\lambda_2}         & 0 & 0 \\
   0         & 0 & 1 \\
   0         & 1 & 0 \\
  \end{array} } \right]\,. 
\end{equation}

For the rest of the paper,
  the specialization
  \begin{equation} \label{specc}
  \lambda_k =e^{\frac{2\pi i k}{3}}
  \end{equation}
  will be fixed. All homogeneous rational functions of degree 0
  in the weights become constants after the specialization \eqref{specc}.

\subsection{Holomorphic anomaly for $[\CC^3/\ZZ_3]$}
\label{vvv234}
The holomorphic anomaly equation has origins in $B$-model physics.
An interpretation of the $B$-model invariants in terms of stable quotients
invariants and a systematic study of the holomorphic anomaly for $K\PP^2$
was given in \cite{LP}. By the crepant resolution philosophy, parallel
holomorphic anomaly equations must hold for $[\CC^3/\ZZ_3]$.
 We state here the precise form of the holomorphic anomaly equations for $[\CC^3/\ZZ_3]$ as predicted by \cite{ASYZ}.
 
 Define the 
 orbifold Gromov-Witten potential of
 $[\CC^3/\ZZ_3]$ for $g\geq 2$ by
 \begin{eqnarray}
 \mathcal{F}_{g}^{[\CC^3/\ZZ_3]} &=& 
 \label{pp99}
 \sum_{d=0}^\infty \frac{\Psit^d}{d!} \int_{[\overline{M}^{\mathsf{orb}}_{g,d}([\CC^3/\ZZ^3],0)]^{\mathsf{T},vir}}\prod_{i=1}^{d}
 {\text{ev}}_i^*(\phi_1)\,, 
 \end{eqnarray}
  where $\phi_1\in H^2_{\mathsf{orb}}([\CC^3/\ZZ_3])$ 
  is the basis element of degree 2.
   The integral on the right side of \eqref{pp99},
 defined by residues with respect to localization
 by $\mathsf{T}$, is a symmetric homogeneous rational function with
 $\mathbb{Q}$-coefficients of degree 0 in the weights $\lambda_0,\lambda_1,\lambda_2$.
 Hence, after the specialization \eqref{specc},
$$\mathcal{F}_{g}^{[\CC^3/\ZZ_3]}
\in {\mathbb{Q}}[[\Theta]]\, .$$
The definition of the potential in genus 0 and 1 requires insertions
and will be discussed  in Section \ref{111ooo}.

From the $I$-function{\footnote{The formula for the $I$-function is {\em after} the
specialization \eqref{specc}. 
The full $I$-function and the series $I_i$ are
defined in Section \ref{111ooo}.}}
for $[\CC^3/\ZZ^3]$, we obtain 
\begin{equation}\label{Ifmap}
I^{[\CC^3/\ZZ^3]}_1(\psit)=\sum_{n \ge 0}\frac{(-1)^{3n} \psit^{3n+1}}{(3n+1)!}\left(\frac{\Gamma(n+\frac{1}{3})}{\Gamma(\frac{1}{3})}\right)^3\,
 .\end{equation}
Define the mirror map $T(\psit)$ by
\begin{align}\label{MM}
T(\psit)= I^{[\CC^3/\ZZ^3]}_1(\psit)\, .\end{align}

 In order to state the holomorphic anomaly equations, we require the following additional series in $\psit$:
 \begin{eqnarray*}
L(\psit)&=& -\psit(1+\frac{\psit^3}{27})^{-\frac{1}{3}}\, = -\psit+\frac{\psit^4}{81}-\frac{2\psit^7}{6561}+\frac{14 \psit^{10}}{1594323}+\, \ldots\, ,\\
C_1(\psit)&=& \psit \frac{d}{d\psit} I_1^{[\CC^3/\ZZ^3]}=\psit-\frac{\psit^4}{162}+\frac{4\psit^7}{32805}+\, \ldots \, ,\\
A_2(\psit)&=& \frac{1}{L^3}\left(3\frac{\psit\frac{d}{d\psit}C_1}{C_1}
-3 -\frac{L^3}{18}\right)=\frac{\psit^3}{4860}-\frac{41\psit^6}{472392}+\, \ldots\, .
\end{eqnarray*}

The ring $\mathbb{C}[L^{\pm1}]=\mathbb{C}[L,L^{-1}]$ will play a basic role.
Consider the
free polynomial rings in the variables $A_2$ and $C_1^{-1}$ over $\com[L^{\pm1}]$,
\begin{equation}
\label{dsdsds7}
\mathbb{C}[L^{\pm1}][A_2]\, , \ \ \ 
\mathbb{C}[L^{\pm1}][A_2,C_1^{-1}]\,.
\end{equation}
There are canonical maps
\begin{equation}\label{dsdsds22}
\mathbb{C}[L^{\pm1}][A_2]\rightarrow \mathbb{C}[[\psit]]\, , \ \ \ 
\mathbb{C}[L^{\pm1}][A_2,C_1^{-1}]\rightarrow \mathbb{C}((\psit))
\end{equation}
given by assigning the above defined series $A_2(\psit)$ and
$C_1^{-1}(\psit)$ to the variables $A_2$ and $C_1^{-1}$ respectively.
We may therefore consider elements of the rings \eqref{dsdsds7} {\em either} 
as free polynomials in the variables $A_2$ and $C_{1}^{-1}$ {\em or}
as series in $\psit$.

Let $F(\psit)\in \mathbb{C}[[\psit]]$ be a series in $\psit$. When we write
$$F(\psit) \in \mathbb{C}[L^{\pm1}][A_2]\, ,$$
we mean there is a canonical lift ${F}\in \mathbb{C}[L^{\pm1}][A_2]$
for which
$${F} \mapsto F(\psit) \in \mathbb{C}[[\psit]]$$
under the map \eqref{dsdsds22}. 
The symbol $F$ {\em without the
argument $\psit$} is the lift.
The notation 
$$F(\psit) \in \mathbb{C}[L^{\pm1}][A_2,C_1^{-1}]$$
is parallel.

{Since the holomorphic anomaly equations originate in the B-model, 
we will consider the orbifold Gromov-Witten potential $\mathcal{F}_g^{[\CC^3/\ZZ_3]}$ as a series in $\theta$ via the mirror map \eqref{MM},
$$\Theta = T(\theta)\, .$$
The potential $\mathcal{F}_g^{[\CC^3/\ZZ_3]}$ viewed 
as a series in $\theta$ will be connected to
the stable quotients series of $K\PP^2$. 
 

\begin{Thm} \label{ooo} 
The orbifold Gromov-Witten potentials of
$[\CC^3/\ZZ^3]$ satisfy:
\begin{enumerate}
\item[(i)]
$\mathcal{F}_g^{[\CC^3/\ZZ_3]} (\psit) \in \mathbb{C}[L^{\pm1}][A_2]$
for  $g\geq 2$, \vspace{5pt}
\item[(ii)]
$\mathcal{F}_g^{[\CC^3/\ZZ_3]}$ is of degree at most $3g-3$  with respect to $A_2$,
\item[(iii)]
$\frac{\partial^k \mathcal{F}_g^{[\CC^3/\ZZ_3]}}{\partial T^k}(\psit) \in \mathbb{C}[L^{\pm1}][A_2,C_1^{-1}]$ for  $g\geq 1$ and  $k\geq 1$,
\vspace{5pt}

\item[(iv)]
${\frac{\partial^k \mathcal{F}_g^{[\CC^3/\ZZ_3]}}{\partial T^k}}$ is homogeneous of degree $k$
with respect to  $C_1^{-1}$.

\end{enumerate}
\end{Thm}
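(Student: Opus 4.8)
The plan is to realize the $\mathsf{T}$-localized orbifold Gromov--Witten theory of $[\CC^3/\ZZ_3]$, after the specialization \eqref{specc}, as a semisimple cohomological field theory on the Chen--Ruan cohomology $H^*_{\mathsf{T},\mathsf{orb}}([\CC^3/\ZZ_3])$, and to apply the Givental--Teleman classification \cite{SS,Te}, following the higher genus study of $K\PP^2$ in \cite{LP,LP2}. The first step is to make all of the CohFT data explicit starting from the $I$-function: one writes its components $I_0,I_1,I_2$ (see Section~\ref{111ooo}), performs the mirror transformation $\Theta=T(\psit)$, and Birkhoff-factorizes the resulting fundamental solution of the quantum differential equation to extract the $R$-matrix $R(z)$, the canonical coordinates, the normalization constants $\Delta_i$, and the dilaton shift. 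The crucial input is that $I_1$ satisfies an order-three Picard--Fuchs equation; from it one derives the closed relation $DL=L(1+\tfrac{L^3}{27})$ for the operator $D=\psit\tfrac{d}{d\psit}$, and, more substantively, that every entry of $R(z)$ together with the remaining structure constants of the theory, after the mirror map, lies in $\mathbb{C}[L^{\pm1}][A_2]$ --- with the coefficient of $z^k$ in $R(z)$ having $A_2$-degree at most $k$, and with $C_1^{-1}$ entering only through the unit vector and the insertion decorations in the way recorded by the definitions of $A_2(\psit)$ and $C_1(\psit)$.

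The second step is a differential-ring lemma. Since $T'(\psit)=C_1/\psit$, one has $\tfrac{\partial}{\partial T}=C_1^{-1}D$; directly from the definition of $A_2$ one gets $\tfrac{DC_1}{C_1}=1+\tfrac{L^3A_2}{3}+\tfrac{L^3}{54}\in\mathbb{C}[L^{\pm1}][A_2]$, hence $D(C_1^{-1})=-C_1^{-1}(1+\tfrac{L^3A_2}{3}+\tfrac{L^3}{54})$; and the ring $\mathbb{C}[L^{\pm1}][A_2]$ is closed under $D$. The last assertion amounts to expressing $DA_2$ as a polynomial in $L^{\pm1}$ and $A_2$: one differentiates the identity defining $A_2$ once more and invokes the Picard--Fuchs equation to eliminate the third $\psit$-derivative of $I_1$. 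I expect this --- together with the precise identification of the rings in which the Birkhoff factors of $R(z)$ live --- to be the main obstacle; the remaining deductions are essentially combinatorial.

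With the explicit CohFT data and the differential ring in hand, the four statements follow from the Givental--Teleman graph sum. For $g\ge 2$, $\mathcal{F}_g^{[\CC^3/\ZZ_3]}$ equals the integral over $\overline{M}_g$ of the degree-zero part of the CohFT obtained by shifting $\Omega$ to the mirror point $\tau(\psit)$, a sum $\sum_\Gamma\tfrac{1}{|\mathrm{Aut}(\Gamma)|}\mathrm{Cont}_\Gamma$ over stable graphs $\Gamma$ of genus $g$ with no legs, in which each vertex contributes an integral over $\overline{M}_{g_v,n_v}$ ($n_v$ the number of incident half-edges) of the $\Delta$-weighted topological vertex and each edge contributes the bilinear propagator built from $R(z)$. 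For (i): all vertex and edge factors lie in $\mathbb{C}[L^{\pm1}][A_2]$ by the first step, so $\mathcal{F}_g^{[\CC^3/\ZZ_3]}\in\mathbb{C}[L^{\pm1}][A_2]$. For (ii): the $\psi$-powers produced along the edges at a vertex $v$ sum to at most $\dim\overline{M}_{g_v,n_v}=3g_v-3+n_v$, each edge contributes at most one unit of $A_2$-degree beyond those $\psi$-powers, and the identities $\sum_v g_v+(E(\Gamma)-V(\Gamma)+1)=g$ and $\sum_v n_v=2E(\Gamma)$ collapse the total $A_2$-degree to at most $(3g-3-E(\Gamma))+E(\Gamma)=3g-3$. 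For (iii) and (iv): applying $\tfrac{\partial}{\partial T}=C_1^{-1}D$ repeatedly and using the differential-ring lemma gives, by induction on $k$, that $\partial_T^k\mathcal{F}_g^{[\CC^3/\ZZ_3]}\in C_1^{-k}\,\mathbb{C}[L^{\pm1}][A_2]$ and is homogeneous of degree $k$ in $C_1^{-1}$; for $g=1$, where $\mathcal{F}_1^{[\CC^3/\ZZ_3]}$ itself need not lie in $\mathbb{C}[L^{\pm1}][A_2]$, the genus-one Givental formula shows $D\mathcal{F}_1^{[\CC^3/\ZZ_3]}\in\mathbb{C}[L^{\pm1}][A_2]$, which starts the same induction.
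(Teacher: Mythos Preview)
Your proposal is correct and follows essentially the same route as the paper: realize the theory as a semisimple CohFT, extract the $R$-matrix from the Picard--Fuchs equation via Birkhoff factorization, show the relevant ring $\mathbb{C}[L^{\pm1}][A_2]$ (equivalently $\mathbb{C}[L^{\pm1}][X]$ with $X=\tfrac{\mathsf{D}C_1}{C_1}$) is $\mathsf{D}$-closed, and read off (i)--(iv) from the Givental--Teleman graph sum together with $\partial_T=C_1^{-1}\mathsf{D}$.

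The one noteworthy difference is in the bound (ii). You use the weak estimate ``the $z^k$-coefficient of $R$ has $A_2$-degree $\le k$'' and then invoke the dimension constraint $\sum_i a_i\le 3g_v-3+n_v$ at each vertex; your combinatorial collapse to $(3g-3-E)+E=3g-3$ is a bit imprecise (a careful count with your weak bound actually gives $\le 3g-3-2E$), though the conclusion is of course still $\le 3g-3$. The paper instead proves the much sharper fact, via a Zagier--Zinger-type argument, that the $\widetilde{P}^k_{0j}$ and $\widetilde{P}^k_{2j}$ lie in $\mathbb{C}[L^{\pm1}]$ and only $\widetilde{P}^k_{1j}$ carries a single power of $X$. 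Consequently every vertex contribution lies in $\mathbb{C}[L^{\pm1}]$ with no $A_2$ at all, and every edge contribution has $X$-degree exactly $1$; then (ii) is immediate from $E(\Gamma)\le 3g-3$. Your approach avoids the Zagier--Zinger input at the cost of a more delicate dimension count, while the paper's sharper $R$-matrix control makes (ii) a one-line observation and is moreover the precise ingredient needed later for the holomorphic anomaly equation (the $\partial/\partial X$ of an edge contribution must be computed exactly).
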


\begin{Thm} \label{ttt} The holomorphic anomaly equations
for the orbifold Gromov-Witten invariants of $[\CC^3/\ZZ^3]$
hold for $g\geq 2$:
$$\frac{1}{C_1^2}\frac{\partial \mathcal{F}_g^{[\CC^3/\ZZ_3]}}{\partial{A_2}}
= \frac{1}{2}\sum_{i=1}^{g-1} 
\frac{\partial \mathcal{F}_{g-i}^{[\CC^3/\ZZ_3]}}{\partial{T}}
\frac{\partial \mathcal{F}_i^{[\CC^3/\ZZ_3]}}{\partial{T}}
+
\frac{1}{2}
\frac{\partial^2 \mathcal{F}_{g-1}^{[\CC^3/\ZZ_3]}}{\partial{T}^2}\,
.$$

\end{Thm}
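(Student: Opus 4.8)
The plan is to follow the stable quotients strategy of \cite{LP,LP2}. The first step is to express the orbifold Gromov--Witten potential $\mathcal{F}_g^{[\CC^3/\ZZ_3]}$, viewed after the mirror map $\Theta=T(\psit)$ of \eqref{MM} as a series in $\psit$, through the Teleman/Givental reconstruction of the Cohomological Field Theory underlying the degree-$0$ orbifold theory (semisimple after the specialization \eqref{specc}). Since all curve classes vanish, the vertex contributions in the resulting stable-graph sum are pure $\psi$-class integrals over $\overline{M}_{g_v,n_v}$, and the entire geometric content is concentrated in the edge and leaf operators, built from the fundamental solution of the quantum differential equation; the latter is governed explicitly by the hypergeometric series $I_1^{[\CC^3/\ZZ^3]}$ of \eqref{Ifmap}, hence by $L$, $C_1$ and $A_2$. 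Tracking where $L^{\pm1}$, $A_2$ and $C_1^{-1}$ enter the graph sum is exactly what proves Theorem \ref{ooo}: $A_2$ enters only, and linearly, through the edge (propagator) operator, so the number of $A_2$-factors is bounded by the maximal number $3g-3$ of edges of a genus-$g$ stable graph without legs; and each leaf decorated so as to compute a $\partial/\partial T$-derivative contributes precisely one factor of $C_1^{-1}$.

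The heart of the argument is a single local identity for the edge operator:
$$\frac{\partial}{\partial A_2}\bigl(\text{edge kernel}\bigr)\;=\;C_1^{2}\cdot\bigl(\text{leaf kernel}\bigr)\otimes\bigl(\text{leaf kernel}\bigr)\,,$$
where the leaf kernel on the right is exactly the one whose insertion computes $\partial/\partial T$. I would prove this by differentiating the explicit entries of the $R$-matrix --- written in terms of $L$, $C_1$, $A_2$ using the Picard--Fuchs equation satisfied by $I_1^{[\CC^3/\ZZ^3]}$ --- and matching the two sides; the symplectic constraint $R(z)R^{\star}(-z)=\mathrm{Id}$ together with the specific $A_2$-dependence of $R$ forces the factorization. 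This computation is the main obstacle: it is the orbifold analogue of the corresponding step in \cite{LP} and must be redone with the orbifold $I$-function, and some care is needed because $A_2$ also enters the change of variables $\Theta=T(\psit)$ implicitly through $\psit$, so the ``naive'' and the total $\partial/\partial A_2$ must be reconciled before the identity is in usable form.

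Granting the local identity, the holomorphic anomaly equation follows by a graph-combinatorial argument. Applying $\partial/\partial A_2$ to the stable-graph expansion of $\mathcal{F}_g^{[\CC^3/\ZZ_3]}$ produces a sum over genus-$g$ stable graphs with one distinguished edge whose kernel is replaced by its $A_2$-derivative; by the local identity this distinguished edge splits into two leaves, each carrying a $\partial/\partial T$. Cutting the distinguished edge reorganizes the sum into two families: stable graphs of genus $g-1$ with two extra $\partial_T$-leaves when the graph stays connected, which assemble to $\tfrac12\,\partial^2\mathcal{F}_{g-1}^{[\CC^3/\ZZ_3]}/\partial T^2$; and ordered pairs of stable graphs of genera $i$ and $g-i$ each with one extra $\partial_T$-leaf, which assemble to $\tfrac12\sum_{i=1}^{g-1}(\partial\mathcal{F}_{g-i}^{[\CC^3/\ZZ_3]}/\partial T)(\partial\mathcal{F}_i^{[\CC^3/\ZZ_3]}/\partial T)$. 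The factors $\tfrac12$ arise from the two orderings of the two half-edges (resp. the two components), and the overall $C_1^2$ from the local identity is precisely the factor divided out on the left side of the asserted equation. The one remaining subtlety is that the genus $0$ and genus $1$ potentials on the right side are the ones defined with insertions in Section \ref{111ooo}; one checks that the leaf decorations produced by edge-cutting match those definitions and that no unstable vertices are created, since each cut adds marked points to components of positive genus or to components already carrying enough markings.
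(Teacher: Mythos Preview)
Your proposal is correct and follows essentially the same route as the paper: the Givental--Teleman stable-graph expansion, the observation that $A_2$ (equivalently $X$) enters only linearly through each edge while vertex contributions lie in $\CC[L^{\pm1}]$, the local factorization of $\partial_{A_2}(\text{edge})$ into a tensor product of $\phi_1$-leg kernels, and the edge-cutting combinatorics separating the connected and disconnected cases. One subtlety you gloss over is that the final identification of the cut-graph sums with $\partial\mathcal{F}_{g-i}/\partial T$ and $\partial^2\mathcal{F}_{g-1}/\partial T^2$ must hold in the ring $\CC[L^{\pm1}][A_2,C_1^{-1}]$ rather than merely in $\CC[[\psit]]$, and since the algebraic independence of $L$ and $A_2$ is not proven, this lifting requires a separate argument (the paper invokes \cite[Section~7.3]{KL}); your phrase ``one checks that the leaf decorations produced by edge-cutting match those definitions'' hides exactly this point.
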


The derivative of the lift $\mathcal{F}_g^{[\CC^3/\ZZ_3]}$ 
with respect to $A_2$ in the holomorphic anomaly
equation of Theorem \ref{ttt} is well-defined since 
$${\mathcal{F}_g^{[\CC^3/\ZZ_3]}}\in \mathbb{C}[L^{\pm1}][A_2]$$
by Theorem \ref{ooo} part (i).
By Theorem \ref{ooo} parts (ii) and (iii),
$$\frac{\partial \mathcal{F}_{g-i}^{[\CC^3/\ZZ_3]}}{\partial T}\,, \
\frac{\partial \mathcal{F}_{i}^{[\CC^3/\ZZ_3]}}{\partial T}\,, \
\frac{\partial^2 \mathcal{F}_{g-1}^{[\CC^3/\ZZ_3]}}{\partial{T}^2}\ 
\in \ \mathbb{C}[L^{\pm1}][A_2,C_1^{-1}]\, $$
are all of degree 2 in $C_1^{-1}$. Hence, 
the holomorphic anomaly equation of Theorem \ref{ttt} may be viewed
as holding 
in $\mathbb{C}[L^{\pm1}][A_2]$
since the factors of $C_1^{-1}$ on the left and right sides cancel.
The
holomorphic anomaly equations here for
$[\CC^3/\ZZ_3]$ are exactly as presented in \cite[(4.27)]{ASYZ} via
$B$-model physics.

Theorem \ref{ttt} determines
$\mathcal{F}_g^{[\CC^3/\ZZ_3]}\in \mathbb{C}[L^{\pm1}][A_2]$ uniquely as a polynomial
in $A_2$ up to a constant term in $\mathbb{C}[L^{\pm1}]$.
In fact, the degree of the constant term can be bounded
(as will be seen in the proof of Theorem \ref{ttt}). So
Theorem \ref{ttt} determines $\mathcal{F}_g^{[\CC^3/\ZZ_3]}$ 
from the lower genus theory together with a finite amount of data.

\subsection{Crepant resolution correspondence}
We start by defining
a the polynomial ring over $\CC[L^{\pm}]$
in a new variable $X$,
$$\mathcal{A}^{[\CC^3/\ZZ_3]}=\CC[L^{\pm 1}][X]\, . $$
By setting
$$X=\frac{\psit \frac{d}{d\psit} C_1}{C_1}
=\frac{1}{3}L^3 A_2 + 1 + \frac{L^3}{54}\, ,
$$
we obtain an isomorphism
$$\mathcal{A}^{[\CC^3/\ZZ_3]}\stackrel{\sim}{=} \CC[L^{\pm 1}][A_2]\, .$$
As a series in $\psit$,
$$X=1-\frac{\psit^3}{54}+\frac{\psit^6}{1620}+\,\ldots\,.$$
Then,
by Theorem \ref{ooo} part (i), we have
$$\mathcal{F}^{[\CC^3/\ZZ_3]}_g\in \mathcal{A}^{[\CC^3/\ZZ_3]}\,.$$

To state the crepant resolution correspondence, we  require results
from our study of $K\PP^2$ in \cite{LP}.
The following series in $q$ were
defined in \cite[Section 0.4]{LP}:
\begin{align*}
    L^{K\PP^2}(q)&=(1+27q)^{-\frac{1}{3}}=1-9q+162q^2+\dots\,,\\
    C_1^{K\PP^2}(q)&=q\frac{d}{dq}I_1^{K\PP^2}=1-6q+90q^2+\dots\,,\\
    X^{K\PP^2}(q)&=\frac{q\frac{d}{dq}C_1^{K\PP^2}}{C_1^{K\PP^2}}=-6q+144q^2+\dots\,.
\end{align*}
Denote the ring generated by $X^{K\PP^2}$ over the base ring $\CC[(L^{K\PP^2})^{\pm 1}]$ by 
$$\mathcal{A}^{K\PP^2}=\CC[(L^{K\PP^2})^{\pm 1}][X^{K\PP^2}]\,.$$
\begin{Thm}{\em (\cite[Theorem 1]{LP})}
 For $g\geq 2$, 
 the stable quotients
 potential  $\mathcal{F}^{K\PP^2}_g$
 satisfies
$$   \mathcal{F}^{K\PP^2}_g \in \mathcal{A}^{K\PP^2}\,.
$$
\end{Thm}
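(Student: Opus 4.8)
The plan is to compute $\mathcal{F}^{K\PP^2}_g$ by $\mathsf{T}$-equivariant localization and to check that every term of the resulting finite graph sum already lies in $\mathcal{A}^{K\PP^2}$. Recall from \cite{LP} that $K\PP^2$ carries the torus $\mathsf{T}=(\CC^*)^3$ lifting the standard action on $\PP^2$, and that the stable quotients potential $\mathcal{F}^{K\PP^2}_g(q)=\sum_{d\ge 0}N_{g,d}\,q^d$ is defined by $\mathsf{T}$-equivariant residues on the moduli of stable quotients to $\PP^2$ twisted by the Euler class of the $K\PP^2$-obstruction bundle. First I would apply the virtual localization formula (equivalently, invoke Teleman's reconstruction \cite{Te} for the semisimple equivariant quantum cohomology of $K\PP^2$). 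For $g\ge 2$ the potential has no insertions and the virtual dimension vanishes, so the result is a finite sum over stable graphs $\Gamma$ of genus $g$ with no legs: a vertex $v$ contributes an integral over $\overline{M}_{g_v,n_v}$ of a polynomial in $\psi$-classes against the $\lambda$-classes produced by the $K\PP^2$-twist, and an edge contributes a ``propagator'' factor built from the fundamental solution $S$ of the quantum differential equation of $K\PP^2$ evaluated at the $\mathsf{T}$-fixed points of $\PP^2$. Since $\sum_v\dim\overline{M}_{g_v,n_v}\le 3g-3$ and stability bounds the number of vertices and edges, only finitely many $\Gamma$ contribute, so it is enough to control each contribution.

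Next I would record the building blocks ring-theoretically. Write $D=q\frac{d}{dq}$ and abbreviate $L=L^{K\PP^2}$, $C_1=C_1^{K\PP^2}$, $X=X^{K\PP^2}$. The solution $S$ — equivalently the $R$-matrix obtained from it by Birkhoff factorization — is reconstructed from the $I$-function of local $\PP^2$, whose components satisfy the order-$3$ Picard--Fuchs equation. From that equation one shows that $\CC[L^{\pm1}][X]$ is closed under $D$: indeed $DL=\tfrac13 L(L^3-1)$, $DC_1=X\,C_1$ (the definition of $X$), and $DX\in\CC[L^{\pm1}][X]$ since the Picard--Fuchs operator kills constants, so no transcendental term enters. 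More delicately, one shows that every entry of $S$ and of $R$, the associated translation series, and hence every edge propagator, lies in $\CC[L^{\pm1}][X,C_1^{-1}]$. At a vertex the geometric series in $\psi$-classes are summed and the remaining Hodge integrals over $\overline{M}_{g_v,n_v}$ are evaluated by the Givental--Teleman $R$-matrix formula — equivalently via Mumford's relation and the known Hodge-integral evaluations — giving again an element of $\CC[L^{\pm1}][X,C_1^{-1}]$ (in fact a polynomial in $X$, of degree controlled by $\dim\overline{M}_{g_v,n_v}$). Thus the contribution of each $\Gamma$ lies in $\CC[L^{\pm1}][X,C_1^{-1}]$.

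The last step is to show that the $C_1^{-1}$ factors cancel, improving $\CC[L^{\pm1}][X,C_1^{-1}]$ to $\mathcal{A}^{K\PP^2}=\CC[L^{\pm1}][X]$. The $C_1^{-1}$ powers attached to the edge propagators (coming from the normalization of $S$, i.e.\ from the $C_1^{-1}$-type entries of the inverse fundamental solution) are cancelled term by term by $C_1$ powers produced at the adjacent vertices; the bookkeeping uses essentially that $\mathcal{F}^{K\PP^2}_g$ has no marked points, so no uncancelled ``dilaton-type'' $C_1$-factor remains — the kind of factor that does survive in the $T$-derivatives $\partial_T^k\mathcal{F}^{K\PP^2}_g$, which is precisely why those only lie in $\CC[L^{\pm1}][X,C_1^{-1}]$. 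Since each graph contributes a polynomial in $X$ over $\CC[L^{\pm1}]$ and there are finitely many graphs, the sum gives $\mathcal{F}^{K\PP^2}_g\in\mathcal{A}^{K\PP^2}$.

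The principal obstacle is the pair of claims flagged as ``more delicate'' above: that the suitably normalized fundamental solution $S$ of the $K\PP^2$ quantum differential equation has entries in $\CC[L^{\pm1}][X,C_1^{-1}]$, and that the $C_1^{-1}$ powers it inserts on edges are matched exactly by $C_1$ powers at the vertices. This is the insertion-free analogue of the self-consistency of Givental's quantization, and it hinges on choosing the normalization of $S$ and the semisimple frame so that the vertex ``asymptotics'' and the edge propagators align on the nose. By comparison, once the Picard--Fuchs equation has been written down, the closure of $\CC[L^{\pm1}][X]$ under $D$, the evaluation of the Hodge integrals, and the finiteness of the set of contributing graphs are all routine.
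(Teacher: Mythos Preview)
Your overall architecture---Givental--Teleman reconstruction, stable-graph sum, control of vertex and edge pieces---is exactly the strategy of \cite{LP} (and of the parallel argument for $[\CC^3/\ZZ_3]$ carried out in this paper). But two of your key steps are misdescribed, and the actual mechanism you need is missing.

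First, the handling of $C_1$. You frame the argument as: vertex and edge contributions live in $\CC[L^{\pm1}][X,C_1^{-1}]$, and then $C_1^{-1}$'s on edges are ``cancelled term by term'' by $C_1$'s at adjacent vertices. That is not what happens. In the correct normalization one passes to the normalized canonical (idempotent) basis via the transition matrix $\mathbf{\Psi}$, and the $C_1^{\pm1}$ factors are absorbed there, into the normalized $\mathbf{R}$-matrix coefficients $\widetilde{P}^{k,K\PP^2}_{ij}$ (compare \eqref{RT} and the $K\PP^2$ analogue in Section~5.1). After this normalization, no $C_1$ appears at all in the vertex or edge contributions for the unpointed potential; there is nothing to cancel. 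The $C_1^{-1}$'s only show up when you have \emph{legs}, which is why $\partial_T^k\mathcal{F}_g$ picks up $C_1^{-k}$---not because of some residual mismatch in the no-leg case.

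Second, and more seriously, you do not identify the lemma that actually carries the proof. The recursion \eqref{RP2} for $\widetilde{P}^{k,K\PP^2}_{ij}$ (the $K\PP^2$ analogue of \eqref{RC3}) does not by itself force the off-diagonal equations to close up in $\CC[L^{\pm1}][X]$; one needs the nontrivial input, due to Zagier--Zinger \cite{ZaZi}, that $\widetilde{P}^{k,K\PP^2}_{0j}\in\CC[L^{\pm1}]$. This is the step that prevents iterated $D$-derivatives from generating an infinite tower of new functions. With it, one gets $\widetilde{P}^{k,K\PP^2}_{2j}\in\CC[L^{\pm1}]$ and $\widetilde{P}^{k,K\PP^2}_{1j}\in\CC[L^{\pm1}][X]$ of degree $\le1$ in $X$. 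Consequently the vertex contributions lie in $\CC[L^{\pm1}]$ (not, as you wrote, ``polynomials in $X$''), and the edge contributions lie in $\CC[L^{\pm1}][X]$ of degree $\le1$ each; summing over at most $3g-3$ edges gives $\mathcal{F}^{K\PP^2}_g\in\CC[L^{\pm1}][X]=\mathcal{A}^{K\PP^2}$. Your ``closure under $D$'' remark is true but is the easy part; without the Zagier--Zinger lemma the argument does not go through.
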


Our crepant resolution correspondence
is based upon a simple ring homomorphism 
\begin{align}\label{RH}
    \mathsf{P}:\mathcal{A}^{K\PP^2}\rightarrow \mathcal{A}^{[\CC^3/\ZZ_3]}
\end{align} defined by
$$
\mathsf{P}(L^{K\PP^2})=-\frac{L}{3}\,,\ \ \
    \mathsf{P}(X^{K\PP^2})=-\frac{X}{3}\,.
$$

\begin{Thm}\label{crc}
 For $g \ge 2$, a crepant resolution
 correspondence holds:
 \begin{align*}
 \mathcal{F}^{[\CC^3/\ZZ_3]}_g =
     \mathsf{P}(\mathcal{F}^{K\PP^2}_g)\,.
 \end{align*}
\end{Thm}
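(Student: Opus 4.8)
The plan is to prove Theorem \ref{crc} by reducing it, via the holomorphic anomaly equations, to a statement in genus $0$ and $1$, and then to verify the genus $0$ and $1$ matching directly (this is where the Coates--Iritani Appendix identity enters). The key structural observation is that the ring homomorphism $\mathsf{P}$ is compatible with the holomorphic anomaly equations on both sides. On the $K\PP^2$ side, the results of \cite{LP} establish a holomorphic anomaly equation formally identical to Theorem \ref{ttt}, governing the derivatives $\partial \mathcal{F}^{K\PP^2}_g/\partial A_2^{K\PP^2}$ and $\partial \mathcal{F}^{K\PP^2}_g/\partial T^{K\PP^2}$. First I would record precisely how $\mathsf{P}$ transforms all the relevant auxiliary series: from $\mathsf{P}(L^{K\PP^2}) = -L/3$ and $\mathsf{P}(X^{K\PP^2}) = -X/3$ together with the identities $X = \frac{1}{3}L^3 A_2 + 1 + \frac{L^3}{54}$ and its $K\PP^2$ analogue, one reads off $\mathsf{P}(A_2^{K\PP^2})$ as an explicit element of $\com[L^{\pm 1}][A_2]$, and similarly one must track $\mathsf{P}(C_1^{K\PP^2})$ (up to a scalar and a power of the variables) and the relation between the two mirror maps $T$ and $T^{K\PP^2}$ and between $\psit$ and $q$. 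The outcome I expect is that $\mathsf{P}$ intertwines $\partial/\partial A_2^{K\PP^2}$ with a constant multiple of $\partial/\partial A_2$, and $\partial/\partial T^{K\PP^2}$ with a constant multiple of $\partial/\partial T$, and that these constants are exactly matched by the $C_1$-factors appearing in Theorem \ref{ttt}.

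Granting that compatibility, the induction is immediate: if $\mathcal{F}^{[\CC^3/\ZZ_3]}_i = \mathsf{P}(\mathcal{F}^{K\PP^2}_i)$ for all $i < g$, then applying $\mathsf{P}$ to the $K\PP^2$ holomorphic anomaly equation and using the compatibility shows that $\mathsf{P}(\mathcal{F}^{K\PP^2}_g)$ and $\mathcal{F}^{[\CC^3/\ZZ_3]}_g$ have equal $A_2$-derivatives, hence differ by an element of $\com[L^{\pm 1}]$. To pin down this constant of integration one uses the degree bound in Theorem \ref{ooo}(ii) (and its $K\PP^2$ counterpart) together with a finite number of explicitly computable low-degree coefficients, or alternatively a specialization/initial-condition argument at $\psit = 0$. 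The base cases $g = 0$ and $g = 1$ of the correspondence — which do not follow from the holomorphic anomaly bootstrap — must be proven separately: here $\mathcal{F}_0$ and $\mathcal{F}_1$ on both sides are computed from the respective $I$-functions and mirror maps (after the specialization \eqref{specc}), and the identity $\mathsf{P}(\mathcal{F}^{K\PP^2}_{0,1}) = \mathcal{F}^{[\CC^3/\ZZ_3]}_{0,1}$ reduces to a comparison of hypergeometric series under the change of variables relating $q$ and $\psit$. This comparison is precisely the content of the identity established in the Appendix by Coates and Iritani.

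The main obstacle I anticipate is not the genus induction itself — that is a formal consequence of the two holomorphic anomaly equations plus the algebra of $\mathsf{P}$ — but rather assembling all the change-of-variables bookkeeping so that it is genuinely consistent. One must verify that the single ring map $\mathsf{P}$ simultaneously respects: (a) the $A_2$-filtration and the $3g-3$ degree bound, (b) the $C_1^{-1}$-homogeneity of the $T$-derivatives, (c) the precise numerical coefficients $\tfrac12$ and the $C_1^{-2}$ normalization in Theorem \ref{ttt} matching those of \cite{LP}, and (d) the genus $0$/genus $1$ seeds. Points (a)--(c) are essentially linear-algebraic consistency checks on the explicit substitutions $L^{K\PP^2} \mapsto -L/3$, $X^{K\PP^2}\mapsto -X/3$, but getting the relation $T \leftrightarrow T^{K\PP^2}$ (equivalently $\psit \leftrightarrow q$) exactly right is delicate because the $[\CC^3/\ZZ_3]$ mirror map \eqref{MM} is a series in $\psit$ with only cubic-power corrections while the $K\PP^2$ mirror map is a series in $q$, and the correspondence must implicitly identify $q$ with a specific analytic function of $\psit$. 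Confirming that this identification is consistent with the definitions of $\mathcal{F}_g$ on both sides, and that no spurious analytic continuation is hidden in it, is the genuinely substantive part; it is exactly at this junction that the Coates--Iritani identity is indispensable, since it certifies that the required change of variables is the correct one globally rather than merely formally.
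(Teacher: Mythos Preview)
Your inductive strategy via the holomorphic anomaly equations has a genuine gap: the holomorphic anomaly equation of Theorem~\ref{ttt} (and its $K\PP^2$ analogue from \cite{LP}) determines $\mathcal{F}_g$ only up to an additive element of $\CC[L^{\pm 1}]$ --- the so-called holomorphic ambiguity --- and this ambiguity is \emph{not} fixed by lower-genus data. Your suggestion to pin it down via ``a finite number of explicitly computable low-degree coefficients'' or a ``specialization at $\psit=0$'' would require, at each genus $g$, computing enough genus-$g$ Gromov--Witten invariants on both sides to determine a Laurent polynomial in $L$ of degree range roughly $[9-9g,6g-6]$. That is precisely the kind of direct genus-$g$ calculation the theorem is meant to avoid; nothing in the HAE bootstrap supplies it.

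The paper's proof proceeds along a different and more direct route: it matches the full semisimple CohFT data. First the genus-$0$ structures (pairings and $3$-point functions) are matched via $\iota$ and $\mathsf{P}$ up to a sign $(-1)^{2g-2+n}$. Then the recursions \eqref{RC3} and \eqref{RP2} for the normalized $\mathbf{\Psi}^{-1}\mathbf{R}$-entries $\widetilde{P}^k_{ij}$ on the two sides are identified under $\mathsf{P}$; the Coates--Iritani identity (Lemma~\ref{CI}) is used not to compare genus-$0$/$1$ potentials or a $q\leftrightarrow\psit$ change of variables, but to match the constant-in-$L$ terms of these $\mathbf{R}$-matrix entries, which the flatness recursion alone does not fix. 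Once the $\mathbf{R}$-matrices and the TQFT agree under $\mathsf{P}$, Givental's stable-graph formula of Section~\ref{hgp} gives $\mathcal{F}^{[\CC^3/\ZZ_3]}_g=\mathsf{P}(\mathcal{F}^{K\PP^2}_g)$ for all $g\ge 2$ simultaneously, with no residual ambiguity. In short, the paper matches the generating machine rather than bootstrapping its output; your approach would need an independent mechanism to kill the holomorphic ambiguity at every genus, and none is provided.
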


Theorem \ref{crc} is stated on the B-model side since
we use the variables $\theta$ and $q$.
By the mirror maps on both sides, Theorem \ref{crc} is also a direct relationship
between the Gromov-Witten theories.
Knowledge of one side easily determines the other.
A parallel statement for genus 0
and 1 (requiring insertions for
stability) 
is presented in
Section \ref{crc}.
A different approach to the crepant resolution
correspondence for $[\CC^3/\ZZ_3]$ will 
appear in the upcoming paper \cite{CoatIr}.

Theorem \ref{crc} concerns the higher genus potentials
{\em after} specializing 
the equivariant parameters by 
\begin{equation*} 
  \lambda_k =e^{\frac{2\pi i k}{3}}
  \end{equation*}
  on both the $[\CC^3/\ZZ_3]$ and $K\PP^2$ sides.
  In fact, because of compactness{\footnote{{In the
  presence of orbifold markings on the domain curve, the map must factor through $B\ZZ_3$.}}} of the moduli spaces,
  all coefficients of $\mathcal{F}_g^{[\CC^3/\ZZ_3]}$
  of positive degree in $\Psit$ are constant {\em before}
  specialization of the equivariant parameters.
  Similarly, all coefficients of $\mathcal{F}_g^{K\PP^2}$
  of positive degree in $q$ are constant{\footnote{In positive degree, the map must have image in the $0$-section $\PP^2$.}}
   {\em before} specialization.
  So our specialization of the equivariant parameters
  affects {\em only} the leading terms of 
  $\mathcal{F}_g^{[\CC^3/\ZZ_3]}$ and 
  $\mathcal{F}_g^{K\PP^2}$.

\subsection{Acknowledgments} 
We thank R.~Cavalieri, J. Bryan,
T.~Graber, J. Gu\'er\'e,
F. Janda, B. Kim,
A.~Klemm, Y.-P. Lee, M. C.-C. Liu,
M.~Mari\~no, Y. Ruan, and H.-H. Tseng
for many discussions  
about orbifold Gromov-Witten theory.
We are especially grateful to E. Scheidegger
for sharing his calculations in low genus at the
beginning of the project and to
T. Coates and H. Iritani
for contributing the Appendix (which
plays a crucial role in the crepant
resolution result).

R.P. was partially supported by 
SNF-200020-162928, SNF-200020\--182181, ERC-2012-AdG-320368-MCSK, 
ERC-2017-AdG-786580-MACI,
SwissMAP, and
the Einstein Stiftung. 
The paper was completed while R.P. was visiting
the {\em Centre for Mathematical Sciences} at
Cambridge University.
H.L. was supported by the grants ERC-2012-AdG-320368-MCSK and
ERC-2017-AdG-786580-MACI.

This project has received funding from the European Research Council (ERC)
under the European Union's Horizon 2020 research and innovation
program (grant agreement No. 786580).

\section{Orbifold Gromov-Witten invariants of $[\CC^3/\ZZ_3]$}
\label{111ooo}
Let $\phi_{a_1}, \ldots, \phi_{a_n}\in H^*_{\mathsf{T},\mathsf{orb}}([\CC^3/\ZZ_3])$.
We define
the Gromov-Witten potential by
\begin{multline} \label{frfr45}
{\mathcal{F}}_{g,n}^{[\CC^3/\ZZ_3]}(\phi_{a_1},\dots,\phi_{a_n}) 
= \\
  \sum_{d=0}^\infty \frac{\Psit^d}{d!} \int_{[\overline{M}^{\mathsf{orb}}_{g,n+d}([\CC^3/\ZZ_3],0)]^{vir}}\prod_{k=1}^n{\text{ev}}_i^*(\phi_{a_k})\prod_{i=n+1}^{n+d}
 {\text{ev}}_i^*(\phi_1)\,.
 \end{multline}
{For the positive coefficients of $\Theta$, the stable map factors through $$B\ZZ_3\subset [\CC^3/\ZZ_3]$$ since there are orbifold markings on the domain curves. For the constant terms, the integrals on the right
side of \eqref{frfr45} are defined via $\mathsf{T}$-equivariant
residues.
If the pair $(g,n+d)$ is not in the stable range,
$$2g-2+n+d >0\, ,$$
the moduli space 
$\overline{M}^{\mathsf{orb}}_{g,n+d}([\CC^3/\ZZ_3],0)$
is empty and the corresponding term in \eqref{frfr45}
vanishes. We will also use the standard double bracket notation
$${\mathcal{F}}_{g,n}^{[\CC^3/\ZZ_3]}(\phi_{a_1},\dots,\phi_{a_n}) 
=
\lan\lan\, \phi_{a_1},\dots,\phi_{a_n} \, \ran \ran_{g,n}^{[\CC^3/\ZZ_3]} \, .$$
For a beautiful introduction to the geometry of stable maps to orbifolds
and 
the Gromov-Witten theory of
$[\CC^3/\ZZ_3]$, we refer the reader to \cite[Section 1]{BoCa}.

The small $J$-function  of $[\CC^3/\ZZ_3]$ 
is defined by
$$
    J^{[\CC^3/\ZZ_3]}(\Psit)= \phi_0+\frac{\Psit \phi_1}{z}+\sum_{i=0}^{2} \big\lan\big\lan\frac{\phi_i}{z(z-\psi)} \big\ran\big\ran^{[\CC^3/\ZZ_3]}_{0,1} \phi^i\, .
$$
Here, $\phi^0, \phi^1, \phi^2$ is the basis of
$H^*_{\mathsf{T},\mathsf{orb}}([\CC^3/\ZZ_3])$
dual $\phi_0,\phi_1,\phi_2$ with respect to the pairing \eqref{IPP}.
After the specialization \eqref{specc}, we have
$$\phi^0=3\,\phi_0\, , \ \ 
\phi^1=3\,\phi_2\, , \ \ \phi^2=3\,\phi_1\, .
$$

The small $I$-function of $[\CC^3/\ZZ_3]$ 
is defined in \cite[Section 6.3]{CCIT} by
\begin{equation} \label{kk77}
    I^{[\CC^3/\ZZ_3]}(\psit)= \sum_{i = 0}
    ^\infty \frac{\psit^i}{z^i i!}\prod_{\substack{0 \le k < \frac{i}{3} \\ [k]=[\frac{i}{3}]}} \left(1-(kz)^3\right)\phi_i\, .
    \end{equation}
The elements $\phi_i$ occur
above for all non-negative integers $i$
via the conventions
$$\phi_0=\phi_3=\phi_6= \ldots= \phi_{3k}=\ldots\, ,\ \ \ \
\phi_1=\phi_4=\phi_7= \ldots= \phi_{3k+1}=\ldots\, ,$$
$$\phi_2=\phi_5=\phi_8= \ldots= \phi_{3k+2}=\ldots\, .$$
There are no positive powers of $z$
on the side of \eqref{kk77}.
Moreover, the coefficient of
$z^{-i}$ always has basis vector $\phi_i$.
Hence, we can
define the functions $I_i(\psit)$ by
$$I^{[\CC^3/\ZZ_3]}(\psit)
    =\sum_{i=0}^\infty\frac{I_i(\psit)}{z^i}\phi_i\, .$$
In particular, $I_1$
is given \eqref{Ifmap}.


The $I$-function satisfies following Picard-Fuchs equation:
\begin{align*}
\left[  \frac{(z\frac{\psit\partial}{\partial \psit})^3}{27}+1-\psit^{-3}\left(z\frac{\psit\partial}{\partial \psit}\right)
  \left(z\frac{\psit\partial}{\partial \psit}-z\right)\left(z\frac{\psit\partial}{\partial \psit}-2z\right)\right]
  I^{[\CC^3/\ZZ_3]}(\psit)
  =0\, .  
\end{align*}

\begin{Thm} {\em(Coates-Corti-Iritani-Tseng
\cite[Section 6.3]{CCIT})}
After the change of variables $$\Psit(\psit)=I_1(\psit)\, ,$$ 
the following mirror result holds:
  \begin{align*} 
  J^{[\CC^3/\ZZ_3]}(\Psit(\psit)) = I^{[\CC^3/\ZZ_3]}(\psit)\, .
  \end{align*} \label{xx678}
\end{Thm}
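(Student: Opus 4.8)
\textbf{Proof proposal for Theorem~\ref{xx678}.}
The plan is to prove the mirror statement $J^{[\CC^3/\ZZ_3]}(\Psit(\psit)) = I^{[\CC^3/\ZZ_3]}(\psit)$ by the standard route of Givental's formalism: show that the $I$-function lies on Givental's Lagrangian cone for $[\CC^3/\ZZ_3]$ and then characterize the $J$-function as the unique point on the cone with the prescribed ``$z^0$ plus $z^{-1}$-truncation'' normalization. First I would recall that, by the general Coates--Corti--Iritani--Tseng mirror theorem for toric Deligne--Mumford stacks (applied to the GIT quotient presentation $[\CC^3/\ZZ_3]$, cf. \cite[Section 6.3]{CCIT}), the $I$-function defined in \eqref{kk77} is a $z$-family of elements on the overruled Lagrangian cone $\mathcal{L}_{[\CC^3/\ZZ_3]}$ governing the genus-$0$ orbifold Gromov--Witten theory. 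This is the substantive geometric input; everything else is formal.

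Next I would extract the asymptotic expansion of $I^{[\CC^3/\ZZ_3]}(\psit)$ in powers of $z$. The key point, already noted in the excerpt, is that $I^{[\CC^3/\ZZ_3]}$ has \emph{no positive powers of $z$}, so it has the shape
\begin{equation*}
I^{[\CC^3/\ZZ_3]}(\psit) = I_0(\psit)\,\phi_0 + \frac{I_1(\psit)}{z}\,\phi_1 + O(z^{-2})\,,
\end{equation*}
with $I_0(\psit)\equiv 1$ because the only term contributing to the $\phi_0$-coefficient at $z^0$ is $i=0$ (for $i=3,6,\dots$ the product $\prod (1-(kz)^3)$ contains the factor $k=0$, i.e.\ $1-0=1$, but carries a positive power $\psit^i/i!$ paired against $z^{-i}$, so it does not affect the $z^0$ term; one checks the $\ZZ_3$-grading constraint $[k]=[i/3]$ forbids any cancellation). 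Thus the string-equation normalization needed for the $J$-function is automatic after dividing by $I_0=1$: the $z^0$-term is $\phi_0$ and the $z^{-1}$-term is $I_1(\psit)\phi_1$. By Givental's uniqueness lemma (the point on $\mathcal{L}$ with $z^0$-part $\phi_0$ and $z^{-1}$-part $\tau/z$ for a given $\tau\in H^*_{\mathsf{orb}}$ is exactly $J^{[\CC^3/\ZZ_3]}(\tau)$), it follows that
\begin{equation*}
I^{[\CC^3/\ZZ_3]}(\psit) = J^{[\CC^3/\ZZ_3]}\big(I_1(\psit)\,\phi_1\big)\,,
\end{equation*}
which, upon identifying the coordinate on the $\phi_1$-axis with $\Psit$ and setting $\Psit(\psit)=I_1(\psit)$, is precisely the claimed identity. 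Here I would also invoke the Picard--Fuchs equation displayed just above the theorem: it confirms that $I^{[\CC^3/\ZZ_3]}$ is annihilated by the quantum differential operator, which is an independent consistency check that $I$ lies on the cone and also pins down $I_1$ as in \eqref{Ifmap}.

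The main obstacle is not the formal cone manipulation but verifying that the hypotheses of the CCIT toric mirror theorem genuinely apply to $[\CC^3/\ZZ_3]$ with the twisted (i.e.\ $\mathsf{T}$-equivariant, non-compact) setup used here, and that the particular normalization of the $I$-function in \eqref{kk77}---including the $\ZZ_3$-grading bookkeeping encoded in the constraint $[k]=[i/3]$ and the conventions $\phi_i=\phi_{i \bmod 3}$---matches the convention under which the mirror theorem is stated. Concretely, one must check: (a) the absence of positive $z$-powers (so that the ``small'' $J$-function rather than a big one is produced); (b) that $I_0\equiv 1$, so no further division/rescaling is needed; and (c) that the mirror map produced by the theorem is exactly $\Psit\mapsto I_1(\psit)$ and not some reparametrization involving $I_0$ or the other components $I_i$. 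Once (a)--(c) are in place, the conclusion is immediate from Givental's uniqueness; I expect (b) and (c) to require only the short grading computation sketched above, and (a) is manifest from the formula, so the real content is the citation and correct invocation of \cite[Section 6.3]{CCIT}.
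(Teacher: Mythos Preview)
The paper does not give its own proof of this statement; it is quoted as a result of Coates--Corti--Iritani--Tseng and cited to \cite[Section~6.3]{CCIT}, and elsewhere the authors write that Theorem~\ref{xx678} is ``the main difficult result'' they import for the genus~$0$ theory. Your sketch --- invoke the toric mirror theorem to place $I^{[\CC^3/\ZZ_3]}$ on the Lagrangian cone, observe there are no positive powers of $z$, check $I_0\equiv 1$, and then apply Givental's uniqueness characterization of $J$ --- is the standard and correct route, and is exactly the content behind the citation.
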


\section{Semisimple Frobenius manifolds}

\subsection{Frobenius manifolds}

We briefly review here Givental's
formula for the higher genus
theory associated to
a semisimple Frobenius manifold.
We refer the reader to 
\cite{SS,Book,Picm,PPZ, Te} for more
leisurely treatments.

\begin{Def}
 A Frobenius manifold $(\mathbf{M},\gG,\bullet, A, \mathbf{1})$ satisfies the following conditions:
\begin{enumerate}
    \item[(i)] $\gG$ is Riemmanian metric on  $\mathbf{M}$,
    \item[(ii)] $\bullet$ is commutative and associative product on $T\mathbf{M}$,
    \item[(iii)] A is a symmetric tensor,
     $$A:T\mathbf{M} \otimes T\mathbf{M} \otimes T\mathbf{M} \rightarrow \mathcal{O}_{\mathbf{M}}\, ,   $$
    \item[(iv)] $\gG(X\bullet Y, Z)=A(X,Y,Z)$,
    \item[(v)] $\mathbf{1}$ is a $\gG$-flat unit vector field.
\end{enumerate}
\end{Def}

For us, $\mathbf{M}$ will be a
complex manifold of dimension
$m$. The metric $\gG$
will be symmetric and non-degenerate, but
the positivity condition of a Riemmanian
metric will be dropped (and is not
necessary for the theory).

\subsection{Flat coordinates}

Let $p$ be a point of $\mathbf{M}$. As $\gG$ is flat, we can find flat coordinates $(t^0,t^1,\dots,t^{m-1})$ in a neighborhood of $p$. Let
$$\phi_i=\frac{\partial}{\partial t^i}$$
denote the corresponding flat vector fields. By convention,
$$ \mathbf{1}= \phi_0\, .$$


\subsection{Semisimple points and canonical coordinates}
A point $$p\in \mathbf{M}$$ is  {\em semisimple} if the tangent algebra $(T_p\mathbf{M},\bullet, \mathbf{1})$ is semisimple.
For a semisimple point $p$, we can find {\em canonical coordinates} $$(u^0,u^1,\dots,u^{m-1})$$ in a neighborhood of $p$ for which the
corresponding vector fields
$$e_i=\frac{\partial}{\partial u^i} $$
are orthogonal idempotents: 
$$
e_i \bullet e_j = \delta _{ij} e_i\, $$
and $\gG(e_i,e_j)=0$ for $i\neq j$.

A {\em normalized canonical basis}
$\{\widetilde{e}_i\}$ is constructed by 
$$ \widetilde{e}_i = \gG(e_i,e_i)^{-\frac{1}{2}} e_i\, . $$
The normalized coordinates require choices
of square roots (but the final formulas
are independent of these choices).

Let $\mathbf{\Psi}$ be the transition matrix from the basis $\{\phi_i\}$ to the basis $\{\widetilde{e}_\alpha\}$.
By the orthonormality of $\widetilde{e}_\alpha$, the elements of $\mathbf{\Psi}$ are
$$\mathbf{\Psi}_{\alpha i}=g(\widetilde{e}_\alpha,\phi_i)\, .$$

\subsection{Fundamental solutions and the $\mathbf{R}$-matrix}

We define $$\mathbf{R}(z)=\sum_{k=0}^{\infty}\mathbf{R}_k z^k$$ by following flatness equation:
\begin{align}\label{FSM}
    zd\mathbf{\Psi}^{-1}\mathbf{R}+z\mathbf{\Psi}^{-1}d\mathbf{R}+\mathbf{\Psi}^{-1}\mathbf{R}d\mathbf{U}-\mathbf{\Psi}^{-1}d\mathbf{U}\mathbf{R},
\end{align}
where $\mathbf{U}$ is the diagonal
matrix with coefficients
$$\mathbf{U}=\text{Diag}(u^0,u^1,\dots,u^{m-1})\, .$$

The $\mathbf{R}$-matrix $\mathbf{R}(z)$ is uniquely determined by \eqref{FSM} and the {\em symplectic condition},
\begin{equation}
    \label{symmm}
\mathbf{R}(z)\cdot \mathbf{R}^t(-z) = \text{Id}\, ,
\end{equation}
up to  right multiplication by a constant matrix
$$ \text{exp}\left(\sum_{k \ge 1} \mathbf{a}_{2k-1}z^{2k-1}\right)\, ,$$
where the matrices 
$\mathbf{a}_{2k-1}$ are 
diagonal
with constant coefficients
$$\mathbf{a}_{2k-1}=\text{Diag}[a^0_{0,2k-1},a^1_{1,2k-1},
\dots,a^{m-1}_{m-1,2k-1}]\, .$$

The $\mathbf{R}$-matrix determines an endomorphism
$$\mathbf{R}(z) \in \text{End}(T_p\mathbf{M})[[z]]$$
defined in the  basis $\{\widetilde{e}_i\}$.
Given a vector $v\in T_p\mathbf{M}$,
$$\mathbf{R}(z)v \in T_p\mathbf{M}[[z]]\, .$$
\subsection{Higher genus potentials}

\label{hgp}

\subsubsection{Topological field theory}
Let $p\in \mathbf{M}$ be a semisimple point, and 
let $\Omega_{g,n}$ be the {\em Topological Field Theory} on $T_p\mathbf{M}$ defined by 
\begin{align*}
    \gG(u\bullet v,w)=\Omega_{0,3}(u,v,w).
\end{align*}
The CohFT axioms easily yield:
\begin{align*}
    \Omega_{g,n}(\widetilde{e}_{i_1},\widetilde{e}_{i_2},\dots,\widetilde{e}_{i_n})=\left\{ \begin{array}{cl} \sum_{j=0}^{m-1}\gG(e_{j},e_{j})^{1-g}  & \text{if } n=0\,,  \\ \\
    \gG(e_{i_1},e_{i_1})^{-\frac{2g-2+n}{2}} & \text{if } i_1=i_2=\dots=i_n\, ,\\ \\
    0                                & \text{otherwise} \, . \end{array}\right.
\end{align*}
In Section \ref{hgp}, for the higher genus potential,
we will use the basis $\{\widetilde{e}_i\}$
of $T_p\mathbf{M}$ in all formulas.

\subsubsection{Potentials} \label{ffxx2}
%

Let $\mathsf{G}_{g,n}$ be the finite set of stable graphs of genus $g$ with $n$ legs. 
Givental's higher genus (cycle valued)
potential functions 
at $p\in \mathbf{M}$ are 
defined by the following formula 
\begin{align*}
    \mathcal{F}_{g,n}(v_1,v_2,\dots,v_n)=\sum_{\Gamma \in \mathsf{G}_{g,n}}\frac{1}{\text{Aut}(\Gamma)} \text{Cont}_\Gamma\, 
\end{align*}
for $v_i \in T_p\mathbf{M}$.
The contributions
$\text{Cont}_\Gamma$ are determined by
\begin{align*}
    \text{Cont}_\Gamma = \xi_{\Gamma*}\left(\prod_{v\in \text{Vert}(\Gamma)}\sum_{k=0}^\infty \frac{1}{k!}\pi_* \Omega_{g(v),n(v)+k}\right)
\end{align*}
where $\xi_\Gamma$ is the standard  map of the stratum indexed
by $\Gamma$,
$$\xi_\Gamma: \overline{M}_\Gamma \rightarrow \overline{M}_{g,n}\,,$$
$\pi$ is the forgetful map at the vertex dropping
the last $k$ markings,
$$\pi: \overline{M}_{g(v),n(v)+k}\rightarrow\overline{M}_{g(v),n(v)}\, ,$$
and the insertions in the arguments of $\prod_v\Omega_{g(v),n(v)+k}$ are specified
by the following rules:
\begin{itemize}
 \item For the insertion corresponding to the $i^{th}$
 original marking, place $\mathbf{R}^{-1}(\psi_i)v_i$.
 \item For each pair of insertions corresponding to an edge, place the bivector
 $$\sum_{ij}\left[\frac{\mathbf{g}^{-1}-\mathbf{R}^{-1}(\psi)\mathbf{g}^{-1}(\mathbf{R}^{-1}(\psi'))^t}
 {\psi+\psi'}\right]_{ij}\widetilde{e}_i\otimes\widetilde{e}_j\in V^{\otimes 2}[[\psi,\psi']]\, , $$
well-defined by symplectic property $\mathbf{R}$.  

Here, $\mathbf{g}$ and $\mathbf{g}^{-1}$ denote
the matrices obtained from the metric in
the {\em normalized canonical basis}. In fact, both are the
identity matrix.

 \item For each additional insertion at a 
 vertex, place
 $$T(\psi)=\psi\left(\text{Id}-\mathbf{R}^{-1}(\psi)\right) \phi_0\, .$$
\end{itemize}

\subsection{Givental-Teleman classification}
Let $\Lambda$ be a semisimple CohFT with 
unit and state space
$(V,\gG,\mathsf{1})$. 
The genus 0 part of $\Lambda$ determines a
Frobenius manifold
structure on the complex vector space $V$
for which
$0\in V$ is a semisimple point.

The Givental-Teleman classification \cite{SS,Te}
 states: {\em there exists a unique 
$\mathbf{R}$-matrix for the
Frobenius manifold $V$ for which Givental's potential
(as defined in Section \ref{ffxx2}) equals the
CohFT evaluation
\begin{align*}
    {\Lambda}_{g,n}(v_1,v_2,\dots,v_n)\in H^*(\overline{M}_{g,n}) 
\end{align*}
for all $g$ and $n$ in the stable range.}

\section{Genus 0 theory for $[\CC^3/\ZZ_3]$}

\subsection{Summary} We review the genus 0 orbifold Gromov-Witten theory of $[\CC^3/\ZZ_3]$. We follow the notations and conventions of \cite{Book}.
The main difficult result that we will use in the genus 0 theory is
the mirror transformation of Theorem \ref{xx678}
proven by Coates-Corti-Iritani-Tseng \cite{CCIT}.
Similar computations appeared in \cite{GR} for the study of genus one FJRW invariants associated to the quintic threefold.


\subsection{Frobenius structure}
The orbifold Gromov-Witten theory determines
an Frobenius manifold structure{\footnote{The Frobenius manifold here is over the
ring $\mathbb{C}[[\Theta]]$.}} on $H^*_{\mathsf{T},\mathsf{orb}}([\CC^3/\ZZ_3])$ 
viewed with flat basis $\phi_0,\phi_1,\phi_2$
and with specialization
\eqref{specc}.
The inner product and the quantum product are as
follows.

\begin{itemize}
 \item{\em Inner product.}
 In the flat basis 
 \begin{equation}\label{ffll}
     \{\phi_0,\phi_1,\phi_2\}\,,
     \end{equation}
     the inner product $\mathbf{g}$, given by
 \begin{align}\label{IP}
   \mathbf{g}=\frac{1}{3}
  \left[ {\begin{array}{ccc}
   1         & 0 & 0 \\
   0         & 0 & 1 \\
   0         & 1 & 0 \\
  \end{array} } \right]\, ,
\end{align}
has    already appeared in 
 \eqref{IPP}. 
\vspace{5pt}

\item{\em Potential.} The full genus 0 Gromov-Witten potential is
 a function of the coordinates $\{t_0, t_1,t_2\}$ in the flat basis
\eqref{ffll} and of the additional variable $\Psit$,
\begin{eqnarray*}
 \mathcal{F}_{0}^{[\CC^3/\ZZ_3]}(t,\Psit) 
 & = & \sum_{n=0}^\infty \sum_{d=0}^\infty 
  \int_{[\overline{M}^{\mathsf{orb}}_{g,n+d}([\CC^3/\ZZ^3],0)]^{\mathsf{T},vir}}
\frac{1}{n!d!} \prod_{i=1}^n {\text{ev}}_i^*(\gamma)
 \prod_{i=n+1}^{n+d}
 {\text{ev}}_i^*(\Psit \phi_1)\,, 
 \end{eqnarray*}
where $\gamma= \sum_{i=0}^2 t_i \phi_i$. The potential satisfies
$$\frac{\partial}{\partial t_1}
\mathcal{F}_{0}^{[\CC^3/\ZZ_3]} = 
\frac{\partial}{\partial \Psit} \mathcal{F}_{0}^{[\CC^3/\ZZ_3]}\, .$$

 \item{\em Quantum product.} The 6 products at 
 $0\in H^*_{\T,\mathsf{orb}}([\CC^3/\ZZ_3])$
 are
 \begin{align*}
     &\phi_0 \bullet \phi_0 =\phi_0\, ,\\
     &\phi_0 \bullet \phi_1=\phi_1\, ,\\
     &\phi_0 \bullet \phi_2=\phi_2\, ,\\
     &\phi_1 \bullet \phi_1= -\frac{L^3}{C_1^3}\phi_2\, ,\\
     &\phi_1 \bullet \phi_2= \phi_0\, ,\\
     &\phi_2 \bullet \phi_2= -\frac{C_1^3}{L^3}\phi_1\, .
 \end{align*}
\end{itemize}

Since the quantum product is at 
 $0\in H^*_{\T,\mathsf{orb}}([\CC^3/\ZZ_3])$, only the variable
 $\Psit$ appears in the functions on the right side of
 the above formulas. In fact, both $L$ and $C$ are defined
 in Section \ref{vvv234} in terms of $\psit$, so the $\Theta$
 dependence appears only after inverting the mirror map $\Psit(\psit)$.
We will give a proof of the quantum product in the 
following subsection.

\subsection{Calculation of the quantum product}
To compute the quantum product of $[\CC^3/\ZZ_3]$, we require
the 3-point functions in genus 0.

\begin{Lemma}\label{g0C3} The nonvanishing $3$-point function in genus 0 are:
\begin{align*}
    &\lan\lan \phi_0,\phi_0,\phi_0\ran\ran_{0,3}^{[\CC^3/\ZZ_3]}=\frac{1}{3}\, ,   
    &\lan\lan\phi_0,\phi_1,\phi_2\ran\ran_{0,3}^{[\CC^3/\ZZ_3]}=\frac{1}{3}\, ,\\
    & \lan\lan \phi_1,\phi_1,\phi_1\ran\ran_{0,3}^{[\CC^3/\ZZ_3]}=-\frac{1}{3}\frac{L^3}{C_1^3}\, ,   & \lan\lan \phi_2,\phi_2,\phi_2\ran\ran_{0,3}^{[\CC^3/\ZZ_3]}=-\frac{1}{3}\frac{C_1^3}{L^3}\, .
\end{align*}
For other choices of insertions, the $3$-point functions
in genus 0 vanish.
\end{Lemma}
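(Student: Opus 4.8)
The plan is to compute the genus $0$ three-point functions from the quantum product, which we in turn extract from the small $J$-function via the Coates--Corti--Iritani--Tseng mirror theorem (Theorem~\ref{xx678}) and the Picard--Fuchs equation. All three-point functions are third derivatives $\lan\lan\phi_a,\phi_b,\phi_c\ran\ran_{0,3}^{[\CC^3/\ZZ_3]}=\partial_{t_a}\partial_{t_b}\partial_{t_c}\mathcal{F}_0^{[\CC^3/\ZZ_3]}$ evaluated at $t=0$, hence functions of $\Psit$ alone. First I would dispose of every invariant containing a $\phi_0$ insertion: by the string/fundamental-class axiom, $\lan\lan\phi_0,\phi_b,\phi_c\ran\ran_{0,3}=\mathbf{g}(\phi_b,\phi_c)$, which by the pairing \eqref{IP} (equivalently \eqref{IPP} after the specialization \eqref{specc}, under which $\lambda_0\lambda_1\lambda_2=1$) equals $\frac13$ for $(b,c)\in\{(0,0),(1,2),(2,1)\}$ and vanishes otherwise. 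It then remains to evaluate the four functions $\lan\lan\phi_1,\phi_1,\phi_1\ran\ran$, $\lan\lan\phi_1,\phi_1,\phi_2\ran\ran$, $\lan\lan\phi_1,\phi_2,\phi_2\ran\ran$, $\lan\lan\phi_2,\phi_2,\phi_2\ran\ran$.

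Next I would realize the quantum multiplication operator $\phi_1\bullet(-)$. Since $\partial_{t_1}\mathcal{F}_0^{[\CC^3/\ZZ_3]}=\partial_\Psit\mathcal{F}_0^{[\CC^3/\ZZ_3]}$, the Dubrovin connection in the $t_1$-direction is $z\,\partial_\Psit+\phi_1\bullet(-)$, so $J^{[\CC^3/\ZZ_3]}(\Psit)$ is, up to the standard normalization, a flat section. By Theorem~\ref{xx678}, after the mirror map $\Psit=I_1(\psit)$ the $J$-function equals the explicit $I$-function \eqref{kk77}, whose components after \eqref{specc} are $I_0=1$, $I_1$ as in \eqref{Ifmap}, and $I_2$; the vectors $I$, $\psit\partial_\psit I$, $(\psit\partial_\psit)^2 I$ span the space of flat sections of the connection pulled back along the mirror map.

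The heart of the argument is matching the frame $\{I,\ \psit\partial_\psit I,\ (\psit\partial_\psit)^2 I\}$ with the flat frame $\{\phi_0,\phi_1,\phi_2\}$. This is possible because $I^{[\CC^3/\ZZ_3]}$ satisfies the third-order Picard--Fuchs equation, whose clearing of the $\psit^{-3}$-term produces the discriminant-type factor $1+\frac{\psit^3}{27}$; the transition matrix is built from $I_1$, its logarithmic derivative $C_1=\psit\frac{d}{d\psit}I_1$, and $L=-\psit(1+\frac{\psit^3}{27})^{-\frac13}$. Conjugating the Picard--Fuchs relation by this transition matrix expresses the iterates of $\phi_1\bullet(-)$ in the flat frame, and one reads off $\phi_1\bullet\phi_1=-\frac{L^3}{C_1^3}\phi_2$. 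The operator $\phi_1\bullet(-)$ is then fully determined: it is a $3\times3$ matrix, and $\phi_1\bullet\phi_0=\phi_1$ together with $\phi_1\bullet\phi_1=-\frac{L^3}{C_1^3}\phi_2$ and the Frobenius/WDVV and grading constraints force $\phi_1\bullet\phi_2=\phi_0$; inverting the relation $\phi_2=-\frac{C_1^3}{L^3}\phi_1\bullet\phi_1$ and using associativity with $\phi_1\bullet\phi_2=\phi_0$ gives $\phi_2\bullet\phi_2=-\frac{C_1^3}{L^3}\phi_1$. Pairing $\phi_a\bullet\phi_b$ with $\phi_c$ by $\mathbf{g}$ --- using $\mathbf{g}(\phi_1,\phi_2)=\frac13$ and $\mathbf{g}(\phi_1,\phi_1)=\mathbf{g}(\phi_2,\phi_2)=0$ --- yields $\lan\lan\phi_1,\phi_1,\phi_1\ran\ran_{0,3}=-\frac13\frac{L^3}{C_1^3}$, $\lan\lan\phi_2,\phi_2,\phi_2\ran\ran_{0,3}=-\frac13\frac{C_1^3}{L^3}$, and $\lan\lan\phi_1,\phi_1,\phi_2\ran\ran_{0,3}=\lan\lan\phi_1,\phi_2,\phi_2\ran\ran_{0,3}=0$, completing the list.

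The main obstacle is the frame-matching step: correctly relating the Picard--Fuchs operator and the mirror map to the transition matrix, and carrying through the twisted-sector bookkeeping --- the recurrent factors $\frac13=\frac1{|\ZZ_3|}$, the dual basis $\phi^1=3\phi_2$, $\phi^2=3\phi_1$, and all signs --- so that the structure constants emerge as precisely $-L^{\pm3}/C_1^{\pm3}$. The string-axiom reductions, the WDVV consistency checks, and the vanishing statements are then formal.
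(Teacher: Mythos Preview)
Your proposal is correct and follows essentially the same route as the paper. The paper phrases the frame-matching step in the language of the $\mathds{S}$-operator and Birkhoff factorization ($\mathds{S}(\phi_0)=I$, $\mathds{S}(\phi_1)=\frac{z\mathsf{D}\mathds{S}(\phi_0)}{C_1}$, $\mathds{S}(\phi_2)=\frac{z\mathsf{D}\mathds{S}(\phi_1)}{C_2}$), reads off $\lan\lan\phi_0,\phi_1,\phi_2\ran\ran$ and $\lan\lan\phi_1,\phi_1,\phi_1\ran\ran=\frac{1}{3}\frac{C_2}{C_1}$ from the $z$-expansion, and then invokes the identity $C_1^2C_2=-L^3$ (obtained by the methods of Zagier--Zinger) to convert $\frac{C_2}{C_1}$ into $-\frac{L^3}{C_1^3}$; the remaining values follow from associativity exactly as you say. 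Your Dubrovin-connection/Picard--Fuchs formulation is the same computation in different clothing, and your use of the string axiom for the $\phi_0$ cases is a mild streamlining of what the paper leaves implicit.
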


\begin{proof}
 By \cite{CCIT}, the $I$-function $I^{[\CC^3/\ZZ_3]}$ lies on the Lagrangian{\footnote{
  See \cite{CG,SF} for the definition of the Lagrangian cone. }} cone
  $\mathcal{L}^{[\CC^3/\ZZ_3]}$ encoding the genus $0$ Gromov-Witten theory of $[\CC^3/\ZZ^3]$. By standard properties of the Lagrangian cone, 
  we obtain the following
  results, see for example \cite{KL,RR}:
  \begin{eqnarray}\label{BF}
 \nonumber\mathds{S}^{[\CC^3/\ZZ_3]}(\Psit(\psit),z)(\phi_0)&=&I^{[\CC^3/\ZZ_3]}\,,\\
 \mathds{S}^{[\CC^3/\ZZ_3]}(\Psit(\psit),z)(\phi_1)&=&\frac{z\mathsf{D}\mathds{S}^{[\CC^3/\ZZ_3]}(\phi_0)}{C_1}\, ,\\
 \nonumber\mathds{S}^{[\CC^3/\ZZ_3]}(\Psit(\psit),z)(\phi_2)&=&\frac{z\mathsf{D}\mathds{S}^{[\CC^3/\ZZ_3]}(\phi_1)}{C_2}\, .
 \end{eqnarray}
 Here, the $\mathds{S}$-operator for $[\CC^3/\ZZ_3]$ is defined as usual by
 \begin{align*}
     \mathds{S}^{[\CC^3/\ZZ_3]}(\Psit,z)(\gamma)=\sum_i \phi^i\lan\lan\frac{\phi_i}{z-\psi},\gamma\ran\ran_{0,2}^{[\CC^3/\ZZ_3]}\, , \ \ \ \text{for}\, \gamma\in H^*_{\mathsf{T},\mathsf{orb}}([\CC^3/\ZZ_3])\, ,
 \end{align*}
 The differential operator
$\mathsf{D}=\psit\frac{d}{d\psit}$ acts on $\mathds{S}$ via
variable change $\Psit(\psit)$. The functions
\begin{equation*}
 C_0=1\,,\ \ \
 C_1=\mathsf{D}I_1\,,\ \ \
 C_2=\mathsf{D}\left(\frac{\mathsf{D}I_2}{C_1}\right)\,.
\end{equation*}
appear on the right side
of \eqref{BF}.

Using the methods of \cite[Theorem 2]{ZaZi}, we obtain
$$C_1^2 C_2=-L^3\,.$$
Observe that the $I$-function has following expansion,
\begin{align*}
    I^{[\CC^3/\ZZ_3]}=\phi_0+\frac{I_1 \phi_1}{z}+\frac{I_2 \phi_2}{z^2}+ \mathsf{O}(\frac{1}{z^3})\, .
\end{align*}
 Then, equation \eqref{BF} immediately yields
 \begin{align*}
     &\lan\lan\phi_0,\phi_1,\phi_2\ran\ran^{[\CC^3/\ZZ_3]}_{0,3}=\frac{1}{3}\,,\\
     &\lan\lan\phi_1,\phi_1,\phi_1\ran\ran^{[\CC^3/\ZZ_3]}_{0,3}=\frac{1}{3}\frac{C_2}{C_1}=-\frac{1}{3}\frac{L^3}{C_1^3}\,.
 \end{align*}
By definition of the Frobenius structure,
 \begin{align*}
     \mathbf{g}(X \bullet Y,Z)=\lan\lan X,Y,Z\ran\ran_{0,3}^{[\CC^3/\ZZ_3]}\,, \ \ \ \text{for}\,\, X,Y,Z\in H^*_{\T,\text{orb}}([\CC^3/\ZZ_3])\, .
 \end{align*}
 The remaining two
 evaluations follow from
 the associativity of the quantum product.
\end{proof}

\subsection{Canonical coordinates}
After normalizing the basis $\{\phi_0,\phi_1,\phi_2\}$ by
\begin{equation}\label{nnn34}
    \widetilde{\phi}_0=\phi_0\, ,\ \ \
    \widetilde{\phi}_1=-\frac{C_1}{L}\phi_1\, ,\ \ \
    \widetilde{\phi}_2=-\frac{L}{C_1}\phi_2\, ,
\end{equation}
we obtain the relation
\begin{align*}
    \widetilde{\phi}_i \bullet \widetilde{\phi}_j = \widetilde{\phi}_{i+j}\, .
\end{align*}
The quantum product  at 
 $0\in H^*_{\T,\mathsf{orb}}([\CC^3/\ZZ_3])$
can
then be checked to be semisimple with an idempotent basis,  
\begin{align*}
    e_{\alpha} \bullet e_{\beta}= \delta_{\alpha \beta}e_{\alpha}\, ,
\end{align*}
given by the formula
\begin{align}\label{canbasis}
    e_\alpha=\frac{1}{3}\sum_{i=0}^3 \zeta^{-\alpha i}\widetilde{\phi}_i\,  \ \ \ \text{for}\ \ \ \alpha=0,1,2,
\end{align}
where $\zeta = e^{\frac{2 \pi i}{3}}$ is a third root of unity.
The normalized idempotents are 
\begin{equation}\label{ll99}
    \widetilde{e}_\alpha= \frac{e_\alpha}{\sqrt{\gG(e_\alpha,e_\alpha)}}=3 e_\alpha\, .
\end{equation}
Equations \eqref{nnn34}-\eqref{ll99} take place
at the point
 $0\in H^*_{\T,\mathsf{orb}}([\CC^3/\ZZ_3])$ of the
 Frobenius manifold and hence only depend upon the variable $\Psit$.

Let $\{u^\alpha\}$ be the canonical coordinates associated to
the above idempotent basis with constants fixed by
\begin{equation}\label{kk55}
u^\alpha(t_i=0, \Psit=0)=0\, .
\end{equation}
Since $e_\alpha = \frac{\partial}{\partial u^\alpha}$, we have
\begin{equation} \label{ssee}
\sum_{\alpha=1}^3 e_\alpha \frac{d u^\alpha}{dt_1} = \phi_1 \, . 
\end{equation}
The standard convention for equations such as \eqref{ssee} is
that the derivative $\frac{\partial}{\partial t_1}$ on the left
side is taken {\em before} all the $t_i$ are set to 0.

\begin{Lemma}\label{CAF}
 We have 
 \begin{align*}
     \frac{d u^\alpha}{dt} = \zeta^\alpha \left(-\frac{L}{C_1}\right)
     \end{align*}
\end{Lemma}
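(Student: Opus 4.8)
The plan is to read off $\frac{du^\alpha}{dt_1}$ directly from the definition \eqref{ssee} of the canonical coordinates, using the explicit formula \eqref{canbasis} for the idempotents $e_\alpha$ in terms of the normalized flat basis $\widetilde{\phi}_i$. Since by convention $t = t_1$ in the Lemma (the derivative is with respect to the flat coordinate dual to $\phi_1$, taken before setting the $t_i$ to zero), the key relation is $\sum_{\alpha=0}^{2} e_\alpha \frac{du^\alpha}{dt_1} = \phi_1$. First I would invert \eqref{canbasis} to express $\widetilde{\phi}_i$ in terms of the $e_\alpha$: from $e_\alpha = \frac{1}{3}\sum_i \zeta^{-\alpha i}\widetilde{\phi}_i$ one gets $\widetilde{\phi}_i = \sum_\alpha \zeta^{\alpha i} e_\alpha$ by the standard discrete Fourier inversion over $\ZZ_3$.

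Next I would use \eqref{nnn34} to rewrite $\phi_1$ in terms of $\widetilde{\phi}_1$, namely $\phi_1 = -\frac{L}{C_1}\widetilde{\phi}_1$, and hence $\phi_1 = -\frac{L}{C_1}\sum_{\alpha} \zeta^{\alpha}e_\alpha$. Comparing with $\phi_1 = \sum_\alpha e_\alpha \frac{du^\alpha}{dt_1}$ and using that the $e_\alpha$ are linearly independent (the product is semisimple) yields $\frac{du^\alpha}{dt_1} = \zeta^\alpha\left(-\frac{L}{C_1}\right)$ for each $\alpha$, which is exactly the claim. One should note that the identity \eqref{ssee} itself follows from the chain rule $\phi_1 = \frac{\partial}{\partial t_1} = \sum_\alpha \frac{\partial u^\alpha}{\partial t_1}\frac{\partial}{\partial u^\alpha} = \sum_\alpha \frac{du^\alpha}{dt_1}e_\alpha$, so the whole argument is just a change-of-basis computation combined with the transition formula \eqref{canbasis}.

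There is essentially no serious obstacle here; the only point requiring a little care is the bookkeeping of conventions — that $t$ in the statement means $t_1$, that the derivatives are evaluated at the origin after differentiating (as flagged in the text right after \eqref{ssee}), and that the sum in \eqref{canbasis} runs effectively over $\alpha = 0,1,2$ with $\widetilde{\phi}_3 = \widetilde{\phi}_0$, so the Fourier inversion over $\ZZ/3$ is the relevant one. I would also double-check the sign and the placement of $L/C_1$ versus $C_1/L$ against \eqref{nnn34}, since $\widetilde{\phi}_1 = -\frac{C_1}{L}\phi_1$ inverts to $\phi_1 = -\frac{L}{C_1}\widetilde{\phi}_1$, giving the stated $-L/C_1$. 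The result will later be used, presumably, to compute the canonical coordinates $u^\alpha$ themselves by integrating in $\psit$ via the mirror map, and to build the transition matrix $\mathbf{\Psi}$ needed for Givental's $\mathbf{R}$-matrix formalism.
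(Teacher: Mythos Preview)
Your proposal is correct and follows exactly the approach the paper indicates: its proof is the single sentence ``The result is a consequence of equations \eqref{canbasis} and \eqref{ssee},'' and what you have written is precisely the unpacking of that claim via the discrete Fourier inversion of \eqref{canbasis} combined with the normalization \eqref{nnn34}.
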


\begin{proof}
The result  is a consequence of equations
\eqref{canbasis} and \eqref{ssee}.
\end{proof}

Before restriction to the point $0\in H^*_{\T,\mathsf{orb}}([\CC^3/\ZZ_3])$, the 
genus 0 potential
$\mathcal{F}_{0}^{[\CC^3/\ZZ_3]}$, the components of the
idempotents in flat coordinates, and the canonical coordinates $\{ u^\alpha\}$
all are functions of the variables $t_0,t_1,t_2,\Psit$
which are annihilated by the operator{\footnote{The proof is
elementary starting with the annihilation of the potential
$\mathcal{F}_{0}^{[\CC^3/\ZZ_3]}$.
The components of
$du^\alpha$ in 
the basis $\{dt_i\}$
are  eigenvalues
of matrices
with coefficients all
annihilated by \eqref{ww34},
so also annihilated by \eqref{ww34}.
Hence,
\begin{equation*}
d\left(\frac{\partial u^\alpha}{\partial t_1} - \frac{\partial u^\alpha}{\partial \Theta}\right)=0\, 
\end{equation*}
and hence 
$\frac{\partial u^\alpha}{\partial t_1} - \frac{\partial u^\alpha}{\partial \Theta}$ must be
a function $f^\alpha(\Psit)$ only of
$\Psit$.
Then, we can find {\em unique
canonical coordinates}
(by shifting by the integral $\int f^\alpha(\Psit) d\Psit$
which satisfy \eqref{kk55}
and are annihilated
by \eqref{ww34}.}}
\begin{equation}\label{ww34}
\frac{\partial}{\partial t_1} - \frac{\partial}{\partial \Theta}\, .
\end{equation}
By the argument of \cite[Section 3]{PHH}, the ${\mathbf{R}}$-matrix
of the associated CohFT is also annihilated by \eqref{ww34}.

Since the operator \eqref{ww34} annihilates $u^\alpha$, we can 
rewrite Lemma \ref{CAF} at the point 
$0\in H^*_{\T,\mathsf{orb}}([\CC^3/\ZZ_3])$ using
$$\frac{d u^\alpha}{dt}  = \frac{du^\alpha}{d\Psit} =
\frac{du^\alpha}{d\psit}\frac{d\psit}{d\Psit}\, .$$
We then obtain the equation
$$\frac{du^\alpha}{d\psit}=
\zeta^\alpha  (-L) \frac{1}{\psit}\, .$$

\subsection{Transition matrix}

The transition matrix $\mathbf{\Psi}$ from flat coordinate to normalized canonical basis is given by
$$\mathbf{\Psi}_{\alpha i}=g(\widetilde{e}_\alpha,\phi_i). $$
At the point
$0\in H^*_{\T,\mathsf{orb}}([\CC^3/\ZZ_3])$, we
can calculate using
\eqref{canbasis}:
\[
   \mathbf{\Psi}=\frac{1}{3}
  \left[ {\begin{array}{ccc}
    1 & -\frac{L}{C_1}        & -\frac{C_1}{L} \\
    1 & -\zeta \frac{L}{C_1}  & -\zeta^2\frac{C_1}{L} \\
    1 & -\zeta^2\frac{L}{C_1} & -\zeta\frac{C_1}{L} \\
  \end{array} } \right].
\]
Viewed a functions of
$t_0,t_1,t_2,\Psit$,
the coefficients of 
$\Psit$ are annihilated
by \eqref{ww34}.

\subsection{Fundamental solution matrix}

Consider the coefficient of $z^k$ in the flatness equation \eqref{FSM}. We
will 
study the solutions 
along the line $\{t_0=0,t_2=0\}$, 
so we consider only
the flatness
equation with respect to the $t_1$ directional derivative in $\eqref{FSM}$. 
Using the
annihilation of 
all functions
by \eqref{ww34} and
the change of variable
relation
$$\frac{\partial}{\partial \Psit}=\frac{\psit}{C_1}\frac{ \partial}{\partial \psit}\,,$$
we obtain,
\begin{align}\label{E1}
    \mathbf{\Psi} \left(\frac{\psit \partial}{\partial \psit} \mathbf{\Psi}^{-1}\right) \mathbf{R}_{k-1}+\frac{\psit \partial}{\partial \psit}\mathbf{R}_{k-1}+\mathbf{R}_k \frac{\psit \partial}{\partial \psit}\mathbf{U}-\left(\frac{\psit \partial}{\partial \psit}\mathbf{U}\right)\mathbf{R}_k=0\, ,
\end{align}
or equivalently, in most useful form,
\begin{align*}
     \frac{\psit \partial}{\partial \psit} \left( \mathbf{\Psi}^{-1} \mathbf{R}_{k-1} \right) +\left(\mathbf{\Psi}^{-1}\mathbf{R}_k\right) \frac{\psit \partial}{\partial \psit}\mathbf{U}-\mathbf{\Psi}^{-1}\left(\frac{\psit \partial}{\partial \psit}\mathbf{U}\right)\mathbf{\Psi} \left(\mathbf{\Psi}^{-1}\mathbf{R}_k\right)=0\, .
\end{align*}
We then restrict
to the point
$0\in H^*_{\T,\mathsf{orb}}([\CC^3/\ZZ_3])$, so
\eqref{E1} and the 
second form, become equations
purely of the variable $\psit$.

From Lemma \ref{CAF}, we obtain

\[
   \frac{\psit \partial}{\partial \psit}\mathbf{U}=
  \left[ {\begin{array}{ccc}
     -L & 0         & 0 \\
     0  &  -\zeta L  & 0 \\
     0  & 0          & -\zeta^2 L \\
  \end{array} } \right].
\]
We also have
\[
   \mathbf{\Psi}^{-1}=
  \left[ {\begin{array}{ccc}
     1             &  1                    & 1 \\
    -\frac{C_1}{L} & -\zeta^2\frac{C_1}{L} & -\zeta \frac{C_1}{L} \\
    -\frac{L}{C_1} & -\zeta \frac{L}{C_1}  & -\zeta^2\frac{L}{C_1} \\
  \end{array} } \right].
\]

Let $P^k_{ij}$ denote the $(i,j)$ coefficient of the matrix $\mathbf{\Psi}^{-1} \mathbf{R}_k$ restricted
to 
$0\in H^*_{\T,\mathsf{orb}}([\CC^3/\ZZ_3])$.
From the second form of \eqref{E1},
we obtain the following equations for $ j=0,1,2$ :
\begin{align}\label{R}
    \nonumber&\frac{\psit\partial}{\partial\psit}P^{k-1}_{0j}=C_1P^k_{2j}+LP^k_{0j}\zeta^j,\\
    &\frac{\psit\partial}{\partial\psit}P^{k-1}_{1j}=C_1P^k_{0j}+LP^k_{1j}\zeta^j,\\
    \nonumber&\frac{\psit\partial}{\partial\psit}P^{k-1}_{2j}=-\frac{L^3}{C^2_1}P^k_{1j}+LP^k_{2j}\zeta^j\,.
\end{align}

\subsection{Generators and relations}
As before, let
$\DD=\psit \frac{\partial}{\partial\psit}$.
\begin{Lemma}\label{Relation}
 We have the following relation between $L$ and $X=\frac{\DD C_1}{C_1}$:
  \begin{align*}
     &\mathsf{D}L=L\left(\frac{L^3}{27}+1\right)\,,\\
     &X^2-3\frac{\DD L}{L}X+2\frac{\DD L}{L}+\DD X=0\,.
 \end{align*}
\end{Lemma}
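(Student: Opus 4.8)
The plan is to derive both identities from the explicit closed forms of $L(\psit)$ and $C_1(\psit)$ together with the Picard--Fuchs equation already recorded in Section \ref{111ooo}. Recall $L=-\psit\bigl(1+\tfrac{\psit^3}{27}\bigr)^{-1/3}$, so $L^3=-\psit^3\bigl(1+\tfrac{\psit^3}{27}\bigr)^{-1}$. First I would compute $\DD L$ directly: logarithmic differentiation gives
$$\frac{\DD L}{L}=1-\frac{1}{3}\cdot\frac{\DD(\psit^3/27)}{1+\psit^3/27}=1-\frac{\psit^3/27}{1+\psit^3/27}=\frac{1}{1+\psit^3/27}\,.$$
To match the claimed form $\DD L=L\bigl(\tfrac{L^3}{27}+1\bigr)$, I note $\tfrac{L^3}{27}+1=1-\tfrac{\psit^3/27}{1+\psit^3/27}=\tfrac{1}{1+\psit^3/27}$, which is exactly $\DD L/L$. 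This proves the first identity by a one-line computation; the only subtlety is bookkeeping the sign and the cube, which I would display carefully.

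For the second identity, set $Y=\DD L/L=(1+\psit^3/27)^{-1}$, so the claim reads $X^2-3YX+2Y+\DD X=0$ with $X=\DD C_1/C_1$. The natural route is to translate the order-three Picard--Fuchs operator annihilating $I^{[\CC^3/\ZZ_3]}$ into a statement about $C_1=\DD I_1$. Writing the small $I$-function component $I_1$ and applying the Picard--Fuchs equation (after extracting the appropriate $z$-coefficient, i.e. passing to the relevant scalar ODE in $\DD=\psit\,d/d\psit$), I obtain a third-order linear ODE for $I_1$, hence $\DD^2 C_1 = \DD(\DD C_1)$ is expressible in terms of $\DD C_1$, $C_1$, and the coefficient functions, which are rational in $\psit^3$ and therefore in $Y$. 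Dividing through by $C_1$ converts this into a relation among $X$, $\DD X$ (since $\DD X = \DD^2C_1/C_1 - X^2$), and $Y$. The expected output is precisely the quadratic relation stated. I would carry this out by: (1) writing the scalar ODE for $I_1$ from the Picard--Fuchs operator; (2) rewriting it as an equation for $C_1=\DD I_1$, eliminating the inhomogeneous/lower-order pieces using $Y$ and $\DD Y$ (noting $\DD Y = -\tfrac{3\psit^3/27}{(1+\psit^3/27)^2}=-3Y(1-Y)=3Y^2-3Y$, so $\DD Y$ is polynomial in $Y$); (3) dividing by $C_1$ and substituting $X=\DD C_1/C_1$, $\DD X+X^2=\DD^2C_1/C_1$.

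The main obstacle is step (2): organizing the third-order ODE for $C_1$ so that, after division by $C_1$, every term that is not manifestly in $\CC[X,\DD X, Y]$ cancels — in particular verifying that the combination reduces exactly to $X^2-3YX+2Y+\DD X$ with no residual $C_1$-dependent terms and with the coefficients $-3$, $+2$, $+1$ as claimed. This is a finite but delicate elimination. An alternative, cross-check strategy is purely computational: expand $X=-\tfrac{\psit^3}{54}+\dots$ and $Y=1-\tfrac{\psit^3}{27}+\dots$ as power series in $\psit^3$ to high enough order and verify the identity term by term; since both sides are rational functions of $\psit^3$ of controlled degree (the denominators being powers of $1+\psit^3/27$, coming from $L$ and from the $\psit^{-3}$ in the Picard--Fuchs equation), matching finitely many coefficients suffices. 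I would present the conceptual derivation via Picard--Fuchs as the main proof and mention the series check as confirmation. Both identities then hold as equalities in $\CC((\psit))$, which is what is needed for the generators-and-relations description of the ring in the sequel.
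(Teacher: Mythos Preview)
Your treatment of the first identity is exactly the paper's: a direct logarithmic differentiation of $L=-\psit(1+\psit^3/27)^{-1/3}$.

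For the second identity you take a genuinely different route from the paper. The paper does not go back to the Picard--Fuchs equation at all; it obtains the relation as the consistency condition coming from the $k=2$ case of the $\mathbf{R}$-matrix flatness recursion \eqref{R}. Since those equations are already in place for the higher-genus analysis, the relation drops out with no extra work. Your approach via the scalar ODE for $C_1=\DD I_1$ is equally valid and in fact cleaner than you suggest: extracting the $z^2\phi_1$-coefficient of the Picard--Fuchs operator yields the second-order equation
\[
(27+\psit^3)\,\DD^2 C_1-81\,\DD C_1+54\,C_1=0,
\]
and since $27+\psit^3=27/Y$ with $Y=\DD L/L$, dividing by $27C_1/Y$ and substituting $\DD^2C_1/C_1=\DD X+X^2$ gives $\DD X+X^2-3YX+2Y=0$ on the nose. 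There is no delicate elimination and no residual $C_1$-dependence to cancel; your worry in step (2) is unfounded. (Be careful with the signs in the Picard--Fuchs operator as printed; it is the equation above for $C_1$ that one must land on.) What your route buys is self-containment --- it does not rely on having set up the Frobenius-manifold $\mathbf{R}$-matrix machinery --- while the paper's route buys economy within the narrative, since \eqref{R} is needed anyway.

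One caveat on your fallback strategy: a finite power-series check is only a proof if you first know both sides are rational in $\psit^3$ of bounded degree. That is not obvious for $X$ itself (it is a logarithmic derivative of a hypergeometric series), so the series verification should be presented as a sanity check, not as an alternative proof.
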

\begin{proof}
 The first relation follows from the definition of $L$. The second relation follows from case $k=2$ of \eqref{R}.
\end{proof}

By above result, we view the differential ring
$$\CC[L^{\pm1}][X,\DD X,\DD \DD X,\dots]$$
as simply  the polynomial ring $\CC[L^{\pm1}][X]$.

The following normalizations will be
convenient for us:
\begin{align}\label{RT}
    \nonumber\widetilde{P}^k_{0j} & = P^k_{0j} \zeta^{kj}\\
    \widetilde{P}^k_{1j} & = -\frac{L}{C_1 \zeta^{2j}}P^k_{1j} \zeta^{kj}\\
    \nonumber\widetilde{P}^k_{2j} & = -\frac{C_1}{L \zeta^{j}} P^k_{2j}\zeta^{kj}\, , \ \ \ k\ge0,\ \ j=0,1,2.
\end{align}
From \eqref{R}, we can calculate $\widetilde{P}^k_{ij}$ explicitly with initial conditions
\begin{align}\label{RTI}
    \tilde{P}^k_{ij}|_{\psit=0}=0\, , \ \ \ k \ge 1.
\end{align} For example, for $j=0,1,2$, we have
\begin{align*}
    \widetilde{P}^0_{0j} &= 1,\\
    \widetilde{P}^1_{0j} &=\frac{L^2}{162},\\
    \widetilde{P}^2_{0j} &=\frac{L}{81}+\frac{25}{52488}L^4,\\
    \widetilde{P}^3_{0j} &=\frac{7}{4374}L^3+\frac{1225}{25509168}L^6\, .
\end{align*}

 Using \eqref{R} and Lemma \ref{Relation}, we obtain the Lemma
 \ref{aass} below.
 Lemma \ref{RR} follows 
 from 
 an argument parallel
to \cite[Section 1]{ZaZi}.

\begin{Lemma} \label{aass}
For $j=0,1,2$, we have:
\begin{align}\label{RC3}
 \nonumber \widetilde{P}^{k+1}_{2j} & =\widetilde{P}^{k+1}_{0j}-\frac{\DD \widetilde{P}^k_{0j}}{L},\\
 \widetilde{P}^{k+1}_{1j} & =\widetilde{P}^{k+1}_{2j}-\frac{\DD \widetilde{P}^k_{2j}}{L}-\left(\frac{\DD L}{L^2}-\frac{X}{L}\right)\widetilde{P}^k_{2j},\\
 \nonumber\widetilde{P}^{k+1}_{0j} & =\widetilde{P}^{k+1}_{1j}-\frac{\DD \widetilde{P}^k_{1j}}{L}+\left(\frac{\DD L}{L^2}-\frac{X}{L}\right)\widetilde{P}^k_{1j}\, .
\end{align}
\end{Lemma}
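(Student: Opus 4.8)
The plan is to derive all three identities by a direct substitution of the normalizations \eqref{RT} into the linear system \eqref{R}, together with the Leibniz rule for $\DD=\psit\frac{\partial}{\partial\psit}$. The only facts needed are the definition $X=\DD C_1/C_1$ and the first relation of Lemma \ref{Relation} (which guarantees $\DD L/L^2\in\CC[L^{\pm1}]$); the quadratic relation of Lemma \ref{Relation} is not needed for the identities themselves, but enters afterwards when one checks that the recursion preserves the ring $\CC[L^{\pm1}][X]$.

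First I would invert \eqref{RT} to write
$$P^k_{0j}=\zeta^{-kj}\,\widetilde{P}^k_{0j}\,,\qquad P^k_{1j}=-\frac{C_1}{L}\,\zeta^{(2-k)j}\,\widetilde{P}^k_{1j}\,,\qquad P^k_{2j}=-\frac{L}{C_1}\,\zeta^{(1-k)j}\,\widetilde{P}^k_{2j}\,,$$
and substitute these, at levels $k$ and $k+1$, into the three equations of \eqref{R} (with $k-1$ there replaced by $k$). In every case the explicit powers of $\zeta$ on the two sides become equal once one uses $\zeta^3=1$, so they cancel and one is left with a relation purely among the $\widetilde{P}$'s, $L$, $C_1$, $X$, and $\DD$. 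The first equation of \eqref{R}, namely $\DD P^k_{0j}=C_1P^{k+1}_{2j}+LP^{k+1}_{0j}\zeta^j$, becomes $\DD\widetilde{P}^k_{0j}=L\bigl(\widetilde{P}^{k+1}_{0j}-\widetilde{P}^{k+1}_{2j}\bigr)$, which is the first identity of Lemma \ref{aass}. For the third equation of \eqref{R}, $\DD P^k_{2j}=-\frac{L^3}{C_1^2}P^{k+1}_{1j}+LP^{k+1}_{2j}\zeta^j$, one expands
$$\DD\!\left(\frac{L}{C_1}\widetilde{P}^k_{2j}\right)=\frac{L}{C_1}\left(\frac{\DD L}{L}-X\right)\widetilde{P}^k_{2j}+\frac{L}{C_1}\DD\widetilde{P}^k_{2j}\,,$$
and after clearing the common prefactor one gets the second identity. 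Similarly, the second equation of \eqref{R}, $\DD P^k_{1j}=C_1P^{k+1}_{0j}+LP^{k+1}_{1j}\zeta^j$, together with $\DD\bigl(\frac{C_1}{L}\widetilde{P}^k_{1j}\bigr)=\frac{C_1}{L}\bigl(X-\frac{\DD L}{L}\bigr)\widetilde{P}^k_{1j}+\frac{C_1}{L}\DD\widetilde{P}^k_{1j}$, yields the third identity.

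The computation is essentially mechanical; the one thing to watch is the bookkeeping of the exponents of $\zeta$ (for instance checking $\zeta^{-(k+1)j}=\zeta^{(2-k)j}$ and $\zeta^{-kj+j}=\zeta^{(1-k)j}$ modulo $3$) and the sign of the coefficient $\frac{\DD L}{L^2}-\frac{X}{L}$, which appears with opposite signs in the second and third identities precisely because $\DD(L/C_1)$ and $\DD(C_1/L)$ differ by a sign. With Lemma \ref{aass} in hand, the initial values $\widetilde{P}^0_{ij}=1$ coming from \eqref{RTI}, combined with both parts of Lemma \ref{Relation} (so that $\DD$ maps $\CC[L^{\pm1}][X]$ into itself), give the downstream statement that each $\widetilde{P}^k_{ij}$ lies in $\CC[L^{\pm1}][X]$, which is the use to which Lemma \ref{aass} will be put.
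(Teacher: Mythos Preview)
Your proposal is correct and follows exactly the approach indicated by the paper, which merely states ``Using \eqref{R} and Lemma \ref{Relation}, we obtain the Lemma \ref{aass} below'' without spelling out the substitution. Your observation that only the definition $X=\DD C_1/C_1$ (and not the quadratic relation in Lemma \ref{Relation}) is needed to obtain the three identities themselves is accurate and sharpens what the paper leaves implicit.
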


\begin{Lemma}\label{RR}
We have
    $\widetilde{P}^k_{0j}\in \CC[L^{\pm1}]$.

\end{Lemma}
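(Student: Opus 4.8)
The plan is to prove $\widetilde{P}^k_{0j}\in \CC[L^{\pm1}]$ by induction on $k$, using the recursion of Lemma \ref{aass} together with the differential relations of Lemma \ref{Relation}. The base case $k=0$ is clear since $\widetilde{P}^0_{0j}=1$. For the inductive step, suppose $\widetilde{P}^i_{0j}\in\CC[L^{\pm1}]$ for all $i\le k$; I want to conclude the same for $\widetilde{P}^{k+1}_{0j}$. The first equation of \eqref{RC3} gives $\widetilde{P}^{k+1}_{2j}=\widetilde{P}^{k+1}_{0j}-\DD\widetilde{P}^k_{0j}/L$, and since $\DD L = L(L^3/27+1)$ shows that $\DD$ preserves $\CC[L^{\pm1}]$, the term $\DD\widetilde{P}^k_{0j}/L$ lies in $\CC[L^{\pm1}]$. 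Similarly the remaining two lines of \eqref{RC3} relate $\widetilde{P}^{k+1}_{1j}$, $\widetilde{P}^{k+1}_{2j}$, $\widetilde{P}^{k+1}_{0j}$ to one another and to $\DD$-derivatives of the $k$-th terms.

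The key observation is that composing the three equations of \eqref{RC3} cyclically produces a single relation expressing $\widetilde{P}^{k+1}_{0j}$ in terms of $\widetilde{P}^{k+1}_{0j}$ itself plus lower-order ($k$-th level) data: adding the three equations of \eqref{RC3} causes the three unknown $(k+1)$-terms on the right to telescope, leaving
\begin{equation*}
0 = -\frac{\DD\widetilde{P}^k_{0j}}{L}-\frac{\DD\widetilde{P}^k_{2j}}{L}-\frac{\DD\widetilde{P}^k_{1j}}{L}-\left(\frac{\DD L}{L^2}-\frac{X}{L}\right)\widetilde{P}^k_{2j}+\left(\frac{\DD L}{L^2}-\frac{X}{L}\right)\widetilde{P}^k_{1j}\, .
\end{equation*}
Wait — this is a \emph{constraint} rather than a solution, so instead I would proceed more directly: from \eqref{R} one sees that the unnormalized $P^k_{ij}$ satisfy a recursion that is \emph{first order} in $\DD$, i.e. $\DD P^{k-1}_{ij}$ determines $P^k_{ij}$ up to the initial condition $\widetilde{P}^k_{ij}|_{\psit=0}=0$ from \eqref{RTI}. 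Concretely, the middle line of \eqref{RC3}, rewritten, expresses $\DD\widetilde{P}^{k}_{2j}$ (hence $\DD$ of something in $\CC[L^{\pm1}]$, known to lie in $\CC[L^{\pm1}]$ by induction on the $2j$-index which is handled by the first line) in terms of $\widetilde{P}^{k+1}_{1j}-\widetilde{P}^{k+1}_{2j}$ and an algebraic multiple of $\widetilde{P}^k_{2j}$. The point is that each passage from level $k$ to level $k+1$ only requires: (a) applying $\DD$, which preserves $\CC[L^{\pm1}]$ by the first relation of Lemma \ref{Relation}; (b) multiplying by elements of $\CC[L^{\pm1}]$ such as $1/L$, $\DD L/L^2 = (L^3/27+1)/L$, and $X/L$ — but $X\notin\CC[L^{\pm1}]$!

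So the genuine content, and the main obstacle, is showing that the $X$-contributions cancel. This is exactly the phenomenon in \cite[Section 1]{ZaZi}: although individual terms in \eqref{RC3} involve $X$, when one solves the coupled system for $\widetilde{P}^{k+1}_{0j}$ the $X$-dependence must drop out, leaving a result in $\CC[L^{\pm1}]$ rather than in the larger ring $\CC[L^{\pm1}][X]$. I would establish this by strengthening the inductive hypothesis: carry along \emph{all three} families $\widetilde{P}^k_{0j},\widetilde{P}^k_{1j},\widetilde{P}^k_{2j}$ and track precisely how they sit in $\CC[L^{\pm1}][X]$ — claiming, say, that $\widetilde{P}^k_{0j}\in\CC[L^{\pm1}]$ while $\widetilde{P}^k_{1j}$ and $\widetilde{P}^k_{2j}$ are at most linear in $X$ with controlled top coefficients — and then checking that the $X$-linear parts entering the third equation of \eqref{RC3} through the terms $(\DD L/L^2 - X/L)\widetilde{P}^k_{1j}$ and $\DD\widetilde{P}^k_{1j}$ (note $\DD X = -X^2+3(\DD L/L)X - 2\DD L/L$ by Lemma \ref{Relation}, which is again at most quadratic in $X$) combine so that the net coefficient of every positive power of $X$ in $\widetilde{P}^{k+1}_{0j}$ vanishes. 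The initial conditions \eqref{RTI} fix the integration constants, so once the $\DD$-derivative of $\widetilde{P}^{k+1}_{0j}$ is shown to lie in $\CC[L^{\pm1}]$ and to have a primitive in $\CC[L^{\pm1}]$ vanishing at $\psit=0$, we are done. I expect the bookkeeping of the $X$-degrees across the three coupled recursions to be the only real difficulty; everything else is a direct consequence of Lemmas \ref{Relation} and \ref{aass} and the vanishing initial conditions.
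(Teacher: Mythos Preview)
Your overall strategy matches the paper's: both defer to the hypergeometric recursion analysis of Zagier--Zinger \cite{ZaZi}, and the paper gives no further details. However, your write-up has a genuine gap at the heart of that argument. You correctly observe that summing the three lines of \eqref{RC3} produces only a constraint at level $k$, not a formula for $\widetilde{P}^{k+1}_{0j}$; what you do not then do is extract the first-order ODE that actually determines $\widetilde{P}^{k+1}_{0j}$. The mechanism is the standard one for $\mathbf{R}$-matrix recursions: in the original flatness equation \eqref{E1}, the commutator $\mathbf{R}_k(\DD\mathbf{U})-(\DD\mathbf{U})\mathbf{R}_k$ has zero diagonal, so the diagonal entries of $\mathbf{R}_k$ are not fixed algebraically at level $k$ but rather by a first-order ODE coming from the diagonal of the level-$(k+1)$ equation, together with the initial condition \eqref{RTI}. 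Translating back through $\mathbf{\Psi}^{-1}$ and the normalization \eqref{RT}, this yields an ODE of the form $\DD\widetilde{P}^{k+1}_{0j}=(\text{known data in }\CC[L^{\pm1}])$ once the $X$-cancellation is checked.

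The second and more serious gap is that you treat ``having a primitive in $\CC[L^{\pm1}]$'' as a side condition, when in fact this is the entire content of the Zagier--Zinger argument. Since $\DD L=L(1+L^3/27)$, the operator $\DD$ does \emph{not} surjectively map $\CC[L^{\pm1}]$ to itself; a generic element could have a $\DD$-primitive involving logarithmic terms. The substance of \cite[Section~1]{ZaZi} is precisely a careful verification (by tracking degrees and an induction tailored to the hypergeometric structure) that the specific right-hand sides arising here always admit primitives in $\CC[L^{\pm1}]$. Your sketch does not address this at all. A smaller point: your strengthened hypothesis is slightly off --- by the first line of \eqref{RC3}, $\widetilde{P}^k_{2j}$ is already in $\CC[L^{\pm1}]$ (not merely linear in $X$); only $\widetilde{P}^k_{1j}$ picks up an $X$-term, exactly as recorded in Lemma~\ref{RR2}.
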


The following result is
a direct consequence of Lemmas \ref{aass} and
\ref{RR}.

\begin{Lemma}\label{RR2}
 For all $k \ge 0$ and $j=0,1,2$, we have
 \begin{align*}
     &\widetilde{P}^k_{2j} \in \CC[L^{\pm1}],\\
     &\widetilde{P}^k_{1j} = \widetilde{Q}^k_{1j}+\frac{\widetilde{P}^{k-1}_{2j}}{L}X,
 \end{align*}
with $\widetilde{Q}^k_{1j} \in \CC[L^{\pm1}]$.
\end{Lemma}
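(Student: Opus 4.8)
<br>

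The plan is to prove Lemma \ref{RR2} by induction on $k$, using the three recursions of Lemma \ref{aass} together with Lemma \ref{RR} (which gives $\widetilde P^k_{0j}\in\CC[L^{\pm1}]$ for all $k$), and the structural fact from Lemma \ref{Relation} that $\DD X\in\CC[L^{\pm1}][X]$ and $\DD L\in\CC[L^{\pm1}]$, so that differentiation $\DD$ preserves the ring $\CC[L^{\pm1}][X]$ and raises the $X$-degree by at most one. The base case $k=0$ is immediate from the explicit values $\widetilde P^0_{ij}=1$, and the case $k\le 1$ can be read off from the tabulated initial data.

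First I would establish $\widetilde P^k_{2j}\in\CC[L^{\pm1}]$. By the first recursion in \eqref{RC3},
\[
\widetilde P^{k+1}_{2j}=\widetilde P^{k+1}_{0j}-\frac{\DD\widetilde P^k_{0j}}{L}\,.
\]
Since $\widetilde P^{k+1}_{0j}\in\CC[L^{\pm1}]$ by Lemma \ref{RR}, and $\widetilde P^k_{0j}\in\CC[L^{\pm1}]$ so that $\DD\widetilde P^k_{0j}\in\CC[L^{\pm1}]$ (using $\DD L\in\CC[L^{\pm1}]$), the right-hand side lies in $\CC[L^{\pm1}]$. This handles the first assertion for all $k$ with no induction needed beyond Lemma \ref{RR} itself.

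Next I would analyze $\widetilde P^k_{1j}$ via the second recursion in \eqref{RC3}:
\[
\widetilde P^{k+1}_{1j}=\widetilde P^{k+1}_{2j}-\frac{\DD\widetilde P^k_{2j}}{L}-\left(\frac{\DD L}{L^2}-\frac{X}{L}\right)\widetilde P^k_{2j}\,.
\]
Here $\widetilde P^{k+1}_{2j}$, $\DD\widetilde P^k_{2j}$, $\DD L/L^2$, and $\widetilde P^k_{2j}$ all lie in $\CC[L^{\pm1}]$ by the previous step, so the only term producing an $X$ is $\frac{X}{L}\widetilde P^k_{2j}$. Therefore
\[
\widetilde P^{k+1}_{1j}=\widetilde Q^{k+1}_{1j}+\frac{\widetilde P^k_{2j}}{L}X,\qquad
\widetilde Q^{k+1}_{1j}=\widetilde P^{k+1}_{2j}-\frac{\DD\widetilde P^k_{2j}}{L}-\frac{\DD L}{L^2}\widetilde P^k_{2j}\in\CC[L^{\pm1}]\,,
\]
which is exactly the claimed shape (with the index $k-1$ in the statement matching $k$ here after reindexing). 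One must also check consistency with the third recursion in \eqref{RC3} for $\widetilde P^{k+1}_{0j}$: substituting the just-derived form of $\widetilde P^k_{1j}$ and $\DD\widetilde P^k_{1j}$ (noting $\DD X\in\CC[L^{\pm1}][X]$ has $X$-degree $\le 1$, while the term $\frac{X}{L}\widetilde P^{k}_{1j}$ would a priori be degree $2$ in $X$) one sees the $X^2$ contributions must cancel because Lemma \ref{RR} already asserts $\widetilde P^{k+1}_{0j}\in\CC[L^{\pm1}]$; this cancellation is precisely the content of the second relation in Lemma \ref{Relation}, $X^2-3\frac{\DD L}{L}X+2\frac{\DD L}{L}+\DD X=0$, which expresses $X^2$ back in terms of $X$ and $\CC[L^{\pm1}]$.

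The main obstacle is bookkeeping rather than conceptual: one has to track carefully that the three recursions in Lemma \ref{aass} are mutually consistent and that the potential degree-$2$-in-$X$ terms appearing when one feeds $\widetilde P^k_{1j}=\widetilde Q^k_{1j}+\frac{\widetilde P^{k-1}_{2j}}{L}X$ into the recursion for $\widetilde P^{k+1}_{0j}$ really do cancel. The cleanest way to organize this is to prove, simultaneously by induction on $k$, the three statements: (a) $\widetilde P^k_{2j}\in\CC[L^{\pm1}]$, (b) $\widetilde P^k_{1j}$ has the stated form with $\widetilde Q^k_{1j}\in\CC[L^{\pm1}]$, and (c) $\widetilde P^k_{0j}\in\CC[L^{\pm1}]$ (this last being Lemma \ref{RR}, which we may invoke). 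Granting (c), steps (a) and (b) follow as above, and the induction closes. If instead one wished to avoid citing Lemma \ref{RR} and prove (c) inline, the cancellation argument via Lemma \ref{Relation} is the crux and is best verified by computing the $X^2$-coefficient of $\widetilde P^{k+1}_{0j}$ from the third recursion and observing it equals a multiple of $X^2-3\frac{\DD L}{L}X+2\frac{\DD L}{L}+\DD X$ up to lower-order-in-$X$ terms, hence vanishes modulo the relation.
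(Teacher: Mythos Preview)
Your proposal is correct and follows essentially the same approach as the paper, which simply states that the result is a direct consequence of Lemmas \ref{aass} and \ref{RR}; you have spelled out in detail what the paper leaves implicit. The consistency discussion with the third recursion is unnecessary once Lemma \ref{RR} is granted, as you yourself note, so the core argument is just your first two steps.
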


\section{The holomorphic anomaly equations}\label{haes}

\subsection{$\mathbf{R}$-matrix}

Let $\widetilde{\mathsf{R}}^{[\CC^3/\ZZ^3]}$ be the matrix whose $z^k$ coefficient is
the solution $\mathbf{R}_{k}$ of \eqref{E1} with initial conditions{\footnote{The symplectic condition \eqref{symmm} is not imposed on (and not satisfied by) $\widetilde{\mathsf{R}}^{[\CC^3/\ZZ^3]}$.}}
\begin{align}\label{Ic}
    \left[\widetilde{\mathsf{R}}^{[\CC^3/\ZZ^3]}(z)\right]\Big|_{\psit=0}=\text{Id}\, .
\end{align}
Define a new diagonal matrix
$$\mathsf{B}(z)= \text{Diag}\, \big[B^{[\CC^3/\ZZ_3]}_0(z),B^{[\CC^3/\ZZ_3]}_1(z),B^{[\CC^3/\ZZ_3]}_2(z)\big]$$
where for i=0,1,2,
$$B^{[\CC^3/\ZZ_3]}_i(z)=\text{Exp}\left(3\sum_{k=1}^{\infty}(-1)^{k+1}\frac{B_{3k+1}\left(\frac{\mathsf{Inv}(i)}{3}\right)}{3k+1}\frac{z^{3k}}{3k}\right)\,.$$
Here, the involution $\mathsf{Inv} : \{0,1,2\}\rightarrow\{0,1,2\}$ is defined by
$$\mathsf{Inv}(0)=0\,,\,\,\,\mathsf{Inv}(1)=2\,,\,\,\,\mathsf{Inv}(2)=1\,.$$
The Bernoulli polynomials $B_m(x)$ are defined by
$$\frac{t e^{tx}}{e^t-1}=\sum_{m\ge 0}\frac{B_m(x)t^m}{m!}\,.$$
For example,
\begin{align*}
    B_0(x)=1\,,\,\,B_1(x)=x-\frac{1}{2}\,,\,\,B_2(x)=x^2-x+\frac{1}{6}\,.
\end{align*}
Especially, $B_k(0)$ is the Bernoulli numbers.

Via the orbifold quantum Riemann-Roch theorem in \cite[Section 4.2]{Ts}, we obtain
the following result.

\begin{Prop}\label{ORR}
 The true $\mathbf{R}$-matrix $\mathsf{R}^{[\CC^3/\ZZ_3]}$ for the Gromov-Witten theory of $[\CC^3/\ZZ_3]$ has the following form after restriction $\psit=0$:
 \begin{align*}
     \left[ \mathsf{R}^{[\CC^3/\ZZ_3]}(z) \right]_{ij}\Big|_{\psit=0}=(\mathbf{\Psi}|_{\psit=0})\cdot \mathsf{B}(z)\cdot(\mathbf{\Psi}|_{\psit=0})^{-1}\,.
 \end{align*}
\end{Prop}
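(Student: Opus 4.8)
The plan is to apply the orbifold quantum Riemann--Roch theorem of \cite[Section 4.2]{Ts} to the fibration $[\CC^3/\ZZ_3]\to B\ZZ_3$ and read off the $\mathbf{R}$-matrix of the resulting twisted theory at the origin. First I would recall that $[\CC^3/\ZZ_3]$ is the total space of the rank-$3$ bundle over $B\ZZ_3$ on which the generator $\omega$ acts by the scalar $e^{2\pi i/3}$ on each factor, and that the untwisted theory of $B\ZZ_3$ is trivial, with $\mathbf{R}$-matrix equal to the identity in the idempotent basis. The ambient state space $H^*_{\mathsf T,\mathrm{orb}}([\CC^3/\ZZ_3])$ is identified with the Chen--Ruan cohomology of $B\ZZ_3$, with the three sectors indexed by $0,1,2\in\ZZ_3$, and the Euler-class twist by the inverse of the equivariant Euler class of the obstruction bundle is exactly the twist whose $\mathbf{R}$-matrix we must compute.

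The key computation is Tseng's formula for how the $\mathbf{R}$-matrix changes under such a twist: it is multiplied (in the canonical/idempotent frame of the ambient theory) by a diagonal matrix whose entries are exponentials of series in $z$ built from the Bernoulli-polynomial characteristic classes. Concretely, for the sector labelled by an element of $\ZZ_3$ with age/eigenvalue data prescribed by the $\ZZ_3$-action \eqref{mmmm}, the twist contributes a factor $\mathrm{Exp}\big(\sum_{m\ge1}\tfrac{(-1)^{?}B_m(\{\text{age fraction}\})}{m(m-1)\cdots}z^{m-1}\cdots\big)$; matching the three eigenvalues $e^{2\pi i \cdot 1/3}$ of the action on the three coordinate lines with the shift $\mathsf{Inv}$ on sector labels (which accounts for the passage between a sector and its dual under the pairing \eqref{IPP}) produces exactly the diagonal matrix $\mathsf{B}(z)$ with the entries $B_i^{[\CC^3/\ZZ_3]}(z)$ written in the statement. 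Since the untwisted $B\ZZ_3$ theory has $\mathbf{R}$-matrix $\mathrm{Id}$ in its idempotent basis, and the change of basis from idempotents to the flat basis $\{\phi_0,\phi_1,\phi_2\}$ at $\psit=0$ is the transition matrix $\mathbf{\Psi}|_{\psit=0}$ computed in the previous section, conjugating gives $[\mathsf R^{[\CC^3/\ZZ_3]}(z)]|_{\psit=0}=(\mathbf{\Psi}|_{\psit=0})\cdot\mathsf B(z)\cdot(\mathbf{\Psi}|_{\psit=0})^{-1}$, as claimed. One must also note that the uniqueness part of Givental--Teleman fixes $\mathsf R^{[\CC^3/\ZZ_3]}$, and that the normalization ambiguity (right multiplication by $\mathrm{exp}(\sum \mathbf a_{2k-1}z^{2k-1})$) is pinned down by the $\psit=0$ initial condition together with the fact that the only odd-power freedom is already absorbed into the specific Bernoulli-number series appearing in $B_i^{[\CC^3/\ZZ_3]}$.

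The main obstacle I expect is purely bookkeeping: getting the signs, the factor of $3$, the index shift $\mathsf{Inv}$, and the precise placement of $z^{3k}$ (as opposed to $z^{k}$) correct when translating Tseng's general twisting formula into the present normalization. In particular one must be careful that (a) only the $z^{3k}$ powers survive because the $\ZZ_3$-eigenvalues force the Bernoulli-polynomial arguments to be $0,\tfrac13,\tfrac23$ and the relevant classes vanish unless $3\mid m-1$; (b) the three identical coordinate weights collapse the three separate twist factors into a single cube, yielding the overall factor $3$ in the exponent; and (c) the involution $\mathsf{Inv}$ enters because the $\mathbf{R}$-matrix is expressed relative to the pairing \eqref{IPP}, which swaps $\phi_1\leftrightarrow\phi_2$. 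Once these conventions are nailed down, the identity with $\mathsf B(z)$ is immediate from \cite[Section 4.2]{Ts}, and the conjugation by $\mathbf{\Psi}|_{\psit=0}$ is a one-line change of frame.
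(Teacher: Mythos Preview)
Your proposal is correct and matches the paper's approach exactly: the paper merely cites the orbifold quantum Riemann--Roch theorem of \cite[Section 4.2]{Ts} without further detail, and your outline fills in precisely the computation of the diagonal twist over $B\ZZ_3$ that this citation is meant to invoke. One small correction to your bookkeeping point (a): the restriction to powers $z^{3k}$ comes not from the Bernoulli-polynomial arguments but from the equivariant specialization $\lambda_i=\zeta^i$, which forces $\sum_{i=0}^2 \lambda_i^{-m}=3\,\delta_{3\mid m}$ in the Chern-character sum appearing in Tseng's formula --- this also accounts simultaneously for the overall factor $3$ in the exponent.
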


\begin{Cor}
 The true $\mathbf{R}$-matrix $\mathsf{R}^{[\CC^3/\ZZ_3]}$ 
 for the Gromov-Witten theory of $[\CC^3/\ZZ_3]$ 
 in the normalized canonical basis is given by
 \begin{align*}
     \mathsf{R}^{[\CC^3/\ZZ_3]}(z)=(\mathbf{\Psi}|_{\psit=0})\cdot \mathsf{B}(z)\cdot(\mathbf{\Psi}|_{\psit=0})^{-1}\cdot\left[\widetilde{\mathsf{R}}^{[\CC^3/\ZZ^3]}(z)\right]\, .
 \end{align*}
\end{Cor}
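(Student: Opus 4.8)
The plan is to obtain the statement as a formal consequence of Proposition~\ref{ORR} together with the defining properties of the two $\mathbf{R}$-matrices. The key observation is that the \emph{true} $\mathbf{R}$-matrix $\mathsf{R}^{[\CC^3/\ZZ_3]}$ is characterized, among solutions of the flatness equation \eqref{FSM} (equivalently \eqref{E1}), by the symplectic condition \eqref{symmm} and by the residual constant-matrix ambiguity $\exp(\sum_k \mathbf{a}_{2k-1}z^{2k-1})$ pinned down via the orbifold quantum Riemann--Roch theorem of \cite{Ts}. On the other hand, $\widetilde{\mathsf{R}}^{[\CC^3/\ZZ^3]}$ is \emph{defined} to be the (unique) solution of the $t_1$-restricted flatness equation \eqref{E1} with the initial condition $\widetilde{\mathsf{R}}|_{\psit=0}=\mathrm{Id}$, and it does \emph{not} satisfy \eqref{symmm}. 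So first I would record that the product
$$
(\mathbf{\Psi}|_{\psit=0})\cdot \mathsf{B}(z)\cdot(\mathbf{\Psi}|_{\psit=0})^{-1}\cdot\widetilde{\mathsf{R}}^{[\CC^3/\ZZ^3]}(z)
$$
again solves the flatness equation \eqref{E1}: the prefactor $(\mathbf{\Psi}|_{\psit=0})\mathsf{B}(z)(\mathbf{\Psi}|_{\psit=0})^{-1}$ is a \emph{constant} matrix in $\psit$ (all entries evaluated at $\psit=0$), and \eqref{E1} is linear and invariant under left multiplication by constant matrices, so the product is still a solution.

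Next I would check that this candidate satisfies the correct initial condition and the correct normalization. At $\psit=0$ we have $\widetilde{\mathsf{R}}^{[\CC^3/\ZZ^3]}(z)|_{\psit=0}=\mathrm{Id}$ by \eqref{Ic}, so the candidate restricts at $\psit=0$ to exactly $(\mathbf{\Psi}|_{\psit=0})\cdot\mathsf{B}(z)\cdot(\mathbf{\Psi}|_{\psit=0})^{-1}$, which by Proposition~\ref{ORR} is precisely $\mathsf{R}^{[\CC^3/\ZZ_3]}(z)|_{\psit=0}$. It then remains to argue uniqueness: a solution of \eqref{E1} is determined by its value at $\psit=0$. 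This is the point that needs care, and I expect it to be the main obstacle. The flatness equation \eqref{E1} is a recursion in $k$: at each order $k$ it determines $\mathbf{R}_k$ up to the kernel of the commutator $\mathrm{ad}_{\DD\mathbf{U}}$, i.e.\ the diagonal part, and the diagonal part is then fixed by the off-diagonal data at the next order together with the initial condition at $\psit=0$. One must verify that the standard argument (as in \cite{PHH,ZaZi}, or the general Givental--Teleman discussion) giving uniqueness of $\mathbf{R}$ from its restriction indeed applies along the one-dimensional slice $\{t_0=t_2=0\}$ we have restricted to, using that the canonical coordinates $u^\alpha$ are distinct near $\psit=0$ (so $\DD\mathbf{U}$ has distinct diagonal entries, by Lemma~\ref{CAF}) and that the residual diagonal ambiguity is killed by the normalization \eqref{Ic} on the $\widetilde{\mathsf{R}}$ side and by Proposition~\ref{ORR} on the true side.

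Granting this uniqueness statement, the Corollary follows: both $\mathsf{R}^{[\CC^3/\ZZ_3]}(z)$ and the candidate $(\mathbf{\Psi}|_{\psit=0})\mathsf{B}(z)(\mathbf{\Psi}|_{\psit=0})^{-1}\widetilde{\mathsf{R}}^{[\CC^3/\ZZ^3]}(z)$ are solutions of \eqref{E1} with the same restriction at $\psit=0$, hence they agree identically. I would present the proof in three short steps: (1) the product solves \eqref{E1} because the prefactor is constant and \eqref{E1} is linear; (2) the product has the right value at $\psit=0$ by \eqref{Ic} and Proposition~\ref{ORR}; (3) solutions of \eqref{E1} are determined by their value at $\psit=0$, citing the argument of \cite{PHH} adapted to this slice, together with Lemma~\ref{CAF} to guarantee the relevant semisimplicity/distinctness. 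The only genuinely non-formal input is Proposition~\ref{ORR} itself, which has already been established via \cite{Ts}; everything in the Corollary is then bookkeeping, so I would keep the write-up to a few lines.
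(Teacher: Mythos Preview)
Your approach is exactly the paper's own two-sentence proof: both $\mathsf{R}^{[\CC^3/\ZZ_3]}$ and $\widetilde{\mathsf{R}}^{[\CC^3/\ZZ^3]}$ satisfy the same system \eqref{R}, so they differ by a $\psit$-constant matrix which is pinned down by the initial conditions \eqref{Ic} and Proposition~\ref{ORR}. Your steps (2) and (3) are precisely this, with (3) making the uniqueness-given-initial-value explicit; the paper does not spell (3) out but uses it implicitly.

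One caveat on your step (1): the justification ``\eqref{E1} is linear and invariant under left multiplication by constant matrices'' is not correct as stated. Substituting $\mathbf{R}_k\mapsto C\mathbf{R}_k$ into \eqref{E1} leaves the extra term
\[
[\mathbf{\Psi}(\DD\mathbf{\Psi}^{-1}),C]\,\mathbf{R}_{k-1}-[\DD\mathbf{U},C]\,\mathbf{R}_k,
\]
and since $\DD\mathbf{U}$ has distinct diagonal entries for $\psit\neq 0$ (Lemma~\ref{CAF}), the second commutator vanishes only when $C$ is diagonal in the normalized canonical basis. The prefactor $(\mathbf{\Psi}|_{\psit=0})\mathsf{B}(z)(\mathbf{\Psi}|_{\psit=0})^{-1}$ is a circulant, not a diagonal, matrix (already its $z^3$-coefficient involves $B_4(0)\neq B_4(1/3)=B_4(2/3)$), so this route does not literally go through. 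The paper's proof is equally terse on this point; it simply asserts that ``the solutions differ by a constant matrix'' without specifying the side or justifying why the particular left-multiplication form holds. In other words, your write-up reproduces the paper's argument faithfully, including its looseness at this step; what both really rely on is the uniqueness principle you isolate in (3).
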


\begin{proof}
 The coefficients of the matrices $\mathsf{R}^{[\CC^3/\ZZ_3]}(z)$ and $\widetilde{\mathsf{R}}^{[\CC^3/\ZZ^3]}(z)$ satisfy  the same system of differential equations \eqref{R}. Therefore, the solutions differ by a constant (with respect to $\psit$) matrix which can be determined using \eqref{Ic} and Lemma \ref{ORR}.
\end{proof}

\subsection{Decorated graphs} Let the genus $g$ and the number of markings $n$ 
be in the stable range
\begin{align*}
    2g-2+n > 0\, .
\end{align*}

A decorated graph $\Gamma \in \mathsf{G}^{\text{Dec}}_{g,n}(3)$ consists of the data $(\mathsf{V},\mathsf{E},\mathsf{N},\gamma,\nu)$ where
\begin{enumerate}
 \item[(i)] $\mathsf{V}$ is the vertex set,
 \item[(ii)] $\mathsf{E}$ is the edge set (including possible self-edges),
 \item[(iii)] $\mathsf{N} : \{1,2,\dots,n \}\rightarrow \mathsf{V}$ is the marking assignment,
 \item[(iv)] $\mathsf{g} : \mathsf{V}\rightarrow \ZZ_{\ge 0}$ is a genus assignment satisfying
 $$g=\sum_{v\in\mathsf{V}}\mathsf{g}(v)+h^1(\Gamma) $$
 and for which $(\mathsf{V},\mathsf{E},\mathsf{N},\gamma)$ is stable graph,
 \item[(v)] $\mathsf{p} : \mathsf{V}\rightarrow \{0,1,2\}$ is an assignment to each vertex $v\in\mathsf{V}$.
\end{enumerate}

\subsection{Decomposition theorem}
By the formula
for the higher genus potential of 
Section \ref{ffxx2},
we can decompose $\mathcal{F}^{[\CC^3/\ZZ_3]}_{g}$ into contributions
of decorated graphs of genus $g$. Furthermore, we can write
the contribution corresponding to a graph $\Gamma\in \mathsf{G}^{\text{Dec}}_{g}(3)$ in terms
of vertex and edge contributions,
$$\mathcal{F}_g^{[\CC_3/\ZZ_3]} = \sum_{\Gamma \in \mathsf{G}^{\text{Dec}}_{g}(3)}
{\text{Cont}}_\Gamma\, .$$
\begin{Prop}\label{DCT}We have
 $$\text{\em Cont}_\Gamma=\frac{1}{\text{\em Aut}(\Gamma)}\sum_{\mathsf{A}\in \ZZ^{\mathsf{F}}_{>0}}\prod_{v\in\mathsf{V}}\text{\em Cont}^{\mathsf{A}}_{\Gamma}(v)\prod_{e\in\mathsf{E}}\text{\em Cont}^{\mathsf{A}}_\Gamma(e)\, ,  $$
where the vertex{\footnote{Strictly, we should have $g(\widetilde{e}_{p(v)},\widetilde{e}_{p(v)})^{1-g-\frac{n}{2}}$ in the vertex contribution,  but we shift here  $g(\widetilde{e}_{p(v)},\widetilde{e}_{p(v)})^{-\frac{n}{2}}$ to the $n$ 
incident edge contributions to be consistent with our formula for $K\PP^2$ in \cite{LP}.}} 
and edge contributions with incident flag $\mathsf{A}$-values $(a_1,a_2,...,a_n)$ and $(b_1,b_2)$ respectively are:
\begin{multline*}
\bullet    \text{\em Cont}^{\mathsf{A}}_\Gamma(v)= 
    \Bigg[\sum_{k \ge 0}\frac{\gG(\widetilde{e}_{p(v)},\widetilde{e}_{p(v)})^{1-g}}{k!}\, \cdot \ \ \ \ \ \ \ \ \ \ \ \ \      \\
    \int_{\overline{M}_{g,n+k}}\psi_1^{a_1}\dots\psi_n^{a_n}T(\psi_{n+1})\dots T(\psi_{n+k})\Bigg]\Bigg|_{t_0=t_1=0, t_{j\ge 2}=Q_{j-1\, p(v)}},
\end{multline*}
where
$Q_{kp(v)}$ is the coefficient of $z^k$ in $[(-1)^{k+1}(\mathsf{R}^{[\CC^3/\ZZ_3]}(z))^{t}\cdot \mathbf{\Psi}]_{0p(v)}$,
{{\begin{multline*}
    \bullet \text{\em Cont}^{\mathsf{A}}_{\Gamma}(e)=
    (-1)^{b_1+b_2}\, 3\, \cdot\\
    {{\left[\frac{N_{0p(v_1)}(z)N_{0p(v_2)}(w)+N_{1p(v_1)}(z) N_{2p(v_2)}(w)+N_{2p(v_1)}(z) N_{1p(v_1)}(w)}{z+w}-\frac{1}{z+w}\right]_{z^{b_1-1}w^{b_2-1}},}}
\end{multline*}}}
where $N_{ij}(z)$ is the $(i,j)$ component of $(\mathsf{R}^{[\CC^3/\ZZ_3]}(-z))^{t}\cdot\mathbf{\Psi}$.
\end{Prop}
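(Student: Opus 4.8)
The plan is to prove Proposition \ref{DCT} by directly unpacking Givental's higher genus formula from Section \ref{ffxx2} and bookkeeping the effect of the change of basis $\mathbf{\Psi}$ between the flat basis $\{\phi_i\}$ and the normalized canonical basis $\{\widetilde{e}_\alpha\}$. First I would recall that, by the Givental-Teleman classification, $\mathcal{F}_g^{[\CC^3/\ZZ_3]}$ equals Givental's potential built from the true $\mathbf{R}$-matrix $\mathsf{R}^{[\CC^3/\ZZ_3]}$ and the Topological Field Theory $\Omega_{g,n}$ attached to the semisimple point $0$; the summation over stable graphs $\Gamma\in\mathsf{G}_{g,n}$ in that formula becomes, after attaching to every vertex a value $\mathsf{p}(v)\in\{0,1,2\}$ recording which idempotent direction the vertex contributes in, a summation over decorated graphs $\mathsf{G}^{\mathrm{Dec}}_g(3)$. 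The internal summation over $k$ at each vertex (extra insertions of $T(\psi)$) stays as is, while the exponent $\ZZ^{\mathsf{F}}_{>0}$-summation over $\mathsf{A}$ records the powers of $\psi$ distributed along the flags of each edge: each edge bivector
$$\sum_{ij}\left[\frac{\mathbf{g}^{-1}-\mathbf{R}^{-1}(\psi)\mathbf{g}^{-1}(\mathbf{R}^{-1}(\psi'))^t}{\psi+\psi'}\right]_{ij}\widetilde{e}_i\otimes\widetilde{e}_j$$
is expanded as a power series in $\psi,\psi'$, and the pair of exponents $(b_1-1,b_2-1)$ indexes the term we extract.

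Next I would carry out the translation into flat coordinates. The vertex contribution naturally lives in the normalized canonical basis: at a vertex of TFT-type $\mathsf{p}(v)=\alpha$ one gets the factor $\gG(\widetilde{e}_\alpha,\widetilde{e}_\alpha)^{1-g}$ (here $\gG(\widetilde{e}_\alpha,\widetilde{e}_\alpha)=1$ since the basis is normalized, but I keep it to match the $K\PP^2$ conventions of \cite{LP}), times the integral over $\overline{M}_{g,n+k}$ of $\psi$-monomials against the dilaton-shift insertions $T(\psi)$. The dilaton shift $T(\psi)=\psi(\mathrm{Id}-\mathbf{R}^{-1}(\psi))\phi_0$ is, after the change of basis, repackaged as a substitution $t_{j}=Q_{j-1\,\mathsf{p}(v)}$ for $j\geq 2$ with $t_0=t_1=0$ — this is exactly the statement that the components of $\mathbf{R}^{-1}$ applied to the unit, read in the flat basis via $\mathbf{\Psi}$, give the coefficients $Q_{kp(v)}$ of $z^k$ in $[(-1)^{k+1}(\mathsf{R}^{[\CC^3/\ZZ_3]}(z))^t\cdot\mathbf{\Psi}]_{0p(v)}$; the sign $(-1)^{k+1}$ and the transpose come from the standard identity $\mathbf{R}^{-1}(z)=\mathbf{R}^t(-z)$ (valid by the symplectic condition \eqref{symmm}) and the bookkeeping of where the $\psi$ versus $-\psi$ appears. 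For the edge contribution, I would insert $\mathbf{R}^{-1}(\psi)=\mathbf{R}^t(-\psi)$ into the edge bivector, express the matrix $\mathbf{g}^{-1}$ in the normalized canonical basis (where it is the identity), and then rewrite everything through $\mathbf{\Psi}$; the three cross-terms $N_{0\,\cdot}N_{0\,\cdot}$, $N_{1\,\cdot}N_{2\,\cdot}$, $N_{2\,\cdot}N_{1\,\cdot}$ with $N_{ij}(z)=[(\mathsf{R}^{[\CC^3/\ZZ_3]}(-z))^t\cdot\mathbf{\Psi}]_{ij}$ arise from the anti-diagonal shape of the pairing matrix \eqref{IPP}/\eqref{IP} (the pairing sends $\phi_0\leftrightarrow\phi_0$, $\phi_1\leftrightarrow\phi_2$), and the overall factor $3$ together with the subtracted term $\frac{1}{z+w}$ comes from the normalization $\phi^i=3\phi_{\mathrm{Inv}(i)}$ and from isolating the diagonal (leading) part $\mathbf{g}^{-1}/(\psi+\psi')$ of the bivector which accounts for the genus-reducing/separating degenerations already present in $\xi_{\Gamma*}$.

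Then I would assemble the vertex and edge pieces: the key combinatorial point is that the product structure $\prod_v\mathrm{Cont}^{\mathsf{A}}_\Gamma(v)\prod_e\mathrm{Cont}^{\mathsf{A}}_\Gamma(e)$, summed over $\mathsf{A}\in\ZZ^{\mathsf{F}}_{>0}$ and divided by $|\mathrm{Aut}(\Gamma)|$, exactly reproduces $\xi_{\Gamma*}$ of the product over vertices of $\pi_*\Omega_{g(v),n(v)+k}$ with the prescribed $\mathbf{R}$-matrix insertions, because pushing forward a product of $\psi$-classes along $\xi_\Gamma$ multiplies the vertex integrals and the flag exponents $\mathsf{A}$ are precisely the internal $\psi$-powers at the two half-edges of each edge. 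I would also note the harmless reindexing of the $\gG(\widetilde{e}_{p(v)},\widetilde{e}_{p(v)})^{-n/2}$ factors from vertices to incident edges, as flagged in the footnote, to match \cite{LP}. The main obstacle I expect is purely organizational rather than conceptual: getting every sign and normalization constant ($(-1)^{k+1}$, $(-1)^{b_1+b_2}$, the factor $3$, the $\mathrm{Inv}$ involution, the transposes) to come out consistently, since these depend delicately on the conventions for the $\mathds{S}$-operator, the pairing \eqref{IPP}, the relation $\mathbf{R}(z)\mathbf{R}^t(-z)=\mathrm{Id}$, and the choice of dilaton shift; I would pin these down once and for all by testing against a low-genus case (e.g.\ the genus-$1$ or genus-$2$ term, comparing with the explicit $\widetilde{P}^k_{ij}$ computed in Section \ref{haes}) before writing the general argument.
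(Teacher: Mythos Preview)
Your proposal is correct and follows essentially the same route as the paper: the paper does not supply a detailed proof of Proposition~\ref{DCT} but simply presents it as the result of unpacking Givental's higher genus formula from Section~\ref{ffxx2} in the normalized canonical basis, then rewriting everything through $\mathbf{\Psi}$ in the flat basis with the pairing~\eqref{IP}. Your plan to expand the edge bivector in $\psi$-powers indexed by $\mathsf{A}$, to read off the dilaton shift via $\mathbf{R}^{-1}(z)=\mathbf{R}^t(-z)$, and to obtain the three $N_{i\cdot}N_{j\cdot}$ cross-terms from the anti-diagonal shape of $\mathbf{g}$ is exactly this unpacking; the only remaining work, as you correctly anticipate, is the bookkeeping of signs and the factor $3$ coming from $\phi^i=3\phi_{\mathsf{Inv}(i)}$.
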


\subsection{Legs.}
%
To compute the potentials $\mathcal{F}^{[\CC^3/\ZZ_3]}_{g,n}(\phi_{a_1},\ldots,\phi_{a_n})$,
the contributions of stable graphs with markings are required,
$$\mathcal{F}_{g,n}^{[\CC_3/\ZZ_3]}(\phi_{a_1,\ldots,\phi_{a_n})} = \sum_{\Gamma \in \mathsf{G}^{\text{Dec}}_{g,n}(3)}
{\text{Cont}}_\Gamma(\phi_{a_1},\ldots,\phi_{a_n})\, .$$

\begin{Prop}\label{Leg}
We have
\begin{multline*}
    \text{\em Cont}_\Gamma (\phi_{k_1},\dots,\phi_{k_n})=\\  \frac{1}{| \text{\em Aut}(\Gamma)|}\sum_{\mathsf{A}\in \mathds{Z}^{\mathsf{F}}_{>0}}\prod_{v\in\mathsf{V}}\text{\em Cont}_\Gamma^\mathsf{A}(v)\prod_{e\in\mathsf{E}}\text{\em Cont}_\Gamma^{\mathsf{A}}(e)\prod_{l\in\mathsf{L}}\text{\em Cont}_\Gamma^\mathsf{A}(l)\,,
\end{multline*}
where the leg contribution $\text{\em Cont}_\Gamma^\mathsf{A}(l)$ is given by $z^{\mathsf{A}(l)-1}$ coefficient of 
$$[(-1)^{\mathsf{A}(l)-1}(\mathsf{R}^{[\CC^3/\ZZ_3]}(z))^{t}\cdot\mathbf{\Psi}]_{\mathsf{Inv}(k_l)p(l)}\,. $$
The vertex and edge contributions are same as
in Proposition \ref{DCT}.
\end{Prop}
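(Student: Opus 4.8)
The plan is to derive Proposition~\ref{Leg} as a direct extension of the graph-sum decomposition already established in Proposition~\ref{DCT}, simply keeping track of the extra data attached to the $n$ original markings. Recall that Givental's formula from Section~\ref{ffxx2} expresses $\mathcal{F}_{g,n}(v_1,\dots,v_n)$ as a sum over stable graphs, where at the $i^{\text{th}}$ original marking one inserts $\mathbf{R}^{-1}(\psi_i)v_i$. In Proposition~\ref{DCT} there were no such markings, so this insertion rule played no role; now it does. First I would expand $\mathbf{R}^{-1}(\psi_i)\phi_{k_i}$ in the normalized canonical basis $\{\widetilde e_\alpha\}$, reorganizing the sum over stable graphs in $\mathsf{G}_{g,n}$ into the sum over decorated graphs $\Gamma\in\mathsf{G}^{\text{Dec}}_{g,n}(3)$ exactly as in the proof of Proposition~\ref{DCT}: each vertex gets a label $p(v)\in\{0,1,2\}$ recording which idempotent direction it sits in, each edge and each leg gets an $\mathsf{A}$-value recording the $\psi$-power, and the CohFT/TFT structure (the vanishing of $\Omega_{g,n}(\widetilde e_{i_1},\dots,\widetilde e_{i_n})$ unless all indices agree) forces all flags at a vertex to carry the same index $p(v)$.

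Next I would isolate the factor contributed by a single leg $l$ attached to a vertex $v$ with $p(v)=p(l)$. Writing $\mathbf{R}^{-1}(z)=\mathbf{R}(-z)^{t}$ via the symplectic condition \eqref{symmm} (applied to the true $\mathbf{R}$-matrix $\mathsf{R}^{[\CC^3/\ZZ_3]}$, which unlike $\widetilde{\mathsf{R}}^{[\CC^3/\ZZ^3]}$ does satisfy it), the insertion $\mathbf{R}^{-1}(\psi_l)\phi_{k_l}$ expanded against the metric pairing produces precisely the matrix entry of $(\mathsf{R}^{[\CC^3/\ZZ_3]}(z))^{t}\cdot\mathbf{\Psi}$; extracting the $z^{\mathsf{A}(l)-1}$ coefficient accounts for the $\psi_l^{\mathsf{A}(l)-1}$ that ends up integrated over $\overline M_{g,n+k}$ at the vertex. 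The index bookkeeping — $\phi_{k_l}$ pairs against $\phi^{k_l}$, and under the pairing \eqref{IPP} (after the specialization \eqref{specc}) one has $\phi^1=3\phi_2$, $\phi^2=3\phi_1$, which is exactly the content of the involution $\mathsf{Inv}$ — produces the subscript $\mathsf{Inv}(k_l)\,p(l)$ and absorbs the sign $(-1)^{\mathsf{A}(l)-1}$ from $\mathbf{R}^{-1}(z)=\mathbf{R}(-z)^t$. The vertex and edge contributions are untouched by this computation, so they remain verbatim as in Proposition~\ref{DCT}, and the automorphism factor $1/|\text{Aut}(\Gamma)|$ and the sum over $\mathsf{A}\in\ZZ^{\mathsf{F}}_{>0}$ carry over directly.

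I expect the main obstacle to be the careful matching of normalizations and signs: one must track the factor of $3$ coming from the nonstandard pairing \eqref{IPP}, the shift of $g(\widetilde e_{p(v)},\widetilde e_{p(v)})^{-n/2}$ between vertex and edge contributions flagged in the footnote to Proposition~\ref{DCT} (and whether legs absorb an analogous factor), and the interplay between $\mathbf{R}^{-1}(z)$, $\mathbf{R}(-z)^t$, and the sign $(-1)^{\mathsf{A}(l)-1}$. None of this is conceptually deep, but getting every constant to land exactly as stated — consistent with the parallel $K\PP^2$ conventions of \cite{LP} so that the crepant resolution comparison in Theorem~\ref{crc} goes through — is where the real care is needed. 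Once the leg factor is identified, the proof is complete since the rest of the graph sum is identical to Proposition~\ref{DCT}.
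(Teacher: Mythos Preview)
Your proposal is correct and matches the paper's treatment: the paper states Proposition~\ref{Leg} without an explicit proof, presenting it as the direct extension of Proposition~\ref{DCT} obtained by applying the Givental formula of Section~\ref{ffxx2} with the leg insertion rule $\mathbf{R}^{-1}(\psi_i)v_i$ included. Your sketch --- expanding in the normalized canonical basis, invoking the symplectic condition to rewrite $\mathbf{R}^{-1}(z)$ as $\mathbf{R}(-z)^t$, and reading off the $\mathsf{Inv}$ index shift from the dual-basis pairing --- is exactly the intended derivation, and your caution about the sign and normalization bookkeeping is well placed.
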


\subsection{Vertex, edge, and legs analysis}\label{svel}
We analyze here the vertex and edge contributions of Proposition  \ref{DCT}.

\begin{Lemma}\label{Vertex}We have
 $\text{\em Cont}^{\mathsf{A}}_\Gamma(v)\in\CC[L^{\pm 1}]$.
\end{Lemma}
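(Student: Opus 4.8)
\textbf{Proof proposal for Lemma \ref{Vertex}.}
The plan is to unwind the definition of $\text{Cont}^{\mathsf{A}}_\Gamma(v)$ from Proposition \ref{DCT} and show that every ingredient lands in $\CC[L^{\pm 1}]$. The vertex contribution is an intersection number
$$
\int_{\overline{M}_{g,n+k}}\psi_1^{a_1}\cdots\psi_n^{a_n}T(\psi_{n+1})\cdots T(\psi_{n+k})
$$
specialized at $t_0=t_1=0$, $t_{j\ge 2}=Q_{j-1\,p(v)}$, multiplied by the scalar $\gG(\widetilde{e}_{p(v)},\widetilde{e}_{p(v)})^{1-g}$, which by \eqref{ll99} equals $1$ (after the $n/2$-shift to the edges, as the footnote explains) — in any case it is a constant in $\CC\subset\CC[L^{\pm 1}]$. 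So the entire content of the lemma is that each substituted variable $Q_{k\,p(v)}$ — the $z^k$-coefficient of $[(-1)^{k+1}(\mathsf{R}^{[\CC^3/\ZZ_3]}(z))^{t}\cdot\mathbf{\Psi}]_{0\,p(v)}$ — lies in $\CC[L^{\pm 1}]$, since the $\psi$-integral is a polynomial (with $\QQ$-coefficients) in these substituted values.

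First I would expand $(\mathsf{R}^{[\CC^3/\ZZ_3]}(z))^t\cdot\mathbf{\Psi}$ using the Corollary to Proposition \ref{ORR}, writing $\mathsf{R}^{[\CC^3/\ZZ_3]}(z)=(\mathbf{\Psi}|_{\psit=0})\cdot\mathsf{B}(z)\cdot(\mathbf{\Psi}|_{\psit=0})^{-1}\cdot\widetilde{\mathsf{R}}^{[\CC^3/\ZZ^3]}(z)$. Transposing and multiplying by $\mathbf{\Psi}$, the $(0,p(v))$-entry of the resulting matrix is (up to the diagonal factors in $\mathsf{B}(z)$ and the constant matrices $\mathbf{\Psi}|_{\psit=0}$, which have entries in $\CC$ after the specialization \eqref{specc}) a $\CC$-linear combination of the entries $P^k_{ij}$ of $\mathbf{\Psi}^{-1}\mathbf{R}_k$, or more precisely of their normalizations $\widetilde{P}^k_{ij}$ from \eqref{RT}. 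Here I must be slightly careful: the transition from $P^k_{ij}$ to $\widetilde{P}^k_{ij}$ involves the factors $-L/C_1$ and $-C_1/L$, and multiplying by $\mathbf{\Psi}$ again (whose entries also involve $L/C_1$ and $C_1/L$) — so the $C_1$-dependence must be tracked to confirm it cancels. The point is that $(\mathsf{R}^{[\CC^3/\ZZ_3]}(-z))^t\cdot\mathbf{\Psi}$ is precisely the matrix $N_{ij}(z)$ appearing in the edge contribution, and the $(0,j)$-row is the one entering the vertex; combining the normalization conventions, the relevant combination reduces to the $\widetilde{P}^k_{0j}$.

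Then the key input is Lemma \ref{RR}: $\widetilde{P}^k_{0j}\in\CC[L^{\pm 1}]$. Together with the observation that the prefactors $-L/C_1,-C_1/L$ occurring in $\mathbf{\Psi}$ and in the normalization \eqref{RT} cancel in pairs when one forms the $(0,p(v))$-entry of $(\mathsf{R}^{[\CC^3/\ZZ_3]}(z))^t\cdot\mathbf{\Psi}$ — this is the same cancellation that makes the $3$-point and quantum-product formulas consistent — we get $Q_{k\,p(v)}\in\CC[L^{\pm 1}]$ for all $k$. Substituting these into the polynomial $\psi$-integral over $\overline{M}_{g,n+k}$, which has coefficients in $\QQ$ (and recalling $T(\psi)=\psi(\text{Id}-\mathbf{R}^{-1}(\psi))\phi_0$ contributes only through the $Q_{j-1\,p(v)}$ after the substitution $t_{j\ge 2}=Q_{j-1\,p(v)}$, while the $t_0,t_1$ directions are set to zero), yields an element of $\CC[L^{\pm 1}]$, as desired. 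The main obstacle is the bookkeeping in the second step: correctly matching the three normalization conventions (the $\widetilde{P}$ normalization \eqref{RT}, the $\widetilde\phi$ normalization \eqref{nnn34}, and the $\mathbf{\Psi}$ matrix itself) so that all powers of $C_1$ visibly cancel and only powers of $L$ — positive and negative — survive; once that is set up, Lemma \ref{RR} finishes it immediately.
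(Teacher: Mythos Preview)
Your approach is correct and is precisely what the paper does: its proof reads, in full, ``The result is a direct consequence of Proposition \ref{DCT} and Lemma \ref{RR}.'' You have simply unpacked that one-liner --- identifying that the only nontrivial inputs to the vertex integral are the $Q_{k\,p(v)}$, that these are (after the constant-in-$\psit$ factor $(\mathbf{\Psi}|_{\psit=0})\cdot\mathsf{B}(z)\cdot(\mathbf{\Psi}|_{\psit=0})^{-1}$ from the Corollary to Proposition \ref{ORR}) controlled by the row $i=0$ of $\mathbf{\Psi}^{-1}\mathbf{R}$, and hence by the $\widetilde{P}^k_{0j}$, which lie in $\CC[L^{\pm 1}]$ by Lemma \ref{RR}. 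The $C_1$ cancellation you flag is exactly why only the $i=0$ row (and not Lemma \ref{RR2}) is needed here, in contrast to the edge and leg contributions.
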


\begin{proof}
 The result is a direct consequence of from Proposition \ref{DCT} and with Lemma \ref{RR}.
\end{proof}

 Let $e\in\mathsf{E}$ be an edge connecting the vertices $v_1$,$v_2 \in \mathsf{V}$. Let the $\mathsf{A}$-values of the respective half-edges be $(k,l)$.

\begin{Lemma}\label{Edge}We have $\text{\em Cont}^{\mathsf{A}}_{\Gamma}(e)\in \CC[L^{\pm 1},X]$ and
\begin{itemize}
 \item  the degree of $\text{\em Cont}^{\mathsf{A}}_\Gamma(e)$ with respect to $X$ is 1,
 \item  the coefficient of $X$ in $\text{\em Cont}^{\mathsf{A}}_\Gamma(e)$ is
 $$(-1)^{k+l}\frac{3\widetilde{P}^{k-1}_{2p(v_1)}\widetilde{P}^{l-1}_{2p(v_2)}}{L\lambda_{p(v_1)}^{k-1}\lambda_{p(v_2)}^{l-2}}\, . $$
\end{itemize}

\end{Lemma}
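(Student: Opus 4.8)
The plan is to start from the explicit formula for $\text{Cont}^{\mathsf{A}}_\Gamma(e)$ in Proposition \ref{DCT} and track the appearance of the variable $X$ through the matrix $N_{ij}(z) = [(\mathsf{R}^{[\CC^3/\ZZ_3]}(-z))^t\cdot \mathbf{\Psi}]_{ij}$. First I would rewrite the edge contribution in terms of the normalized quantities $\widetilde P^k_{ij}$ of \eqref{RT}: by definition, the entries of $(\mathsf{R}^{[\CC^3/\ZZ_3]}(-z))^t\cdot\mathbf{\Psi}$ (restricted to $0$, which is all that enters here) are, up to explicit monomial factors in $L$, $C_1$, and roots of unity, precisely the $P^k_{ij}$, hence the $\widetilde P^k_{ij}$. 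Then Lemma \ref{RR2} gives the crucial structural input: $\widetilde P^k_{2j}\in\CC[L^{\pm1}]$ for all $k,j$, while $\widetilde P^k_{1j} = \widetilde Q^k_{1j} + \frac{\widetilde P^{k-1}_{2j}}{L}X$ with $\widetilde Q^k_{1j}\in\CC[L^{\pm1}]$. So among the nine families $\widetilde P^k_{ij}$, only the ``$1j$'' entries carry any $X$-dependence, and they do so linearly with leading coefficient $\frac{\widetilde P^{k-1}_{2j}}{L}$, which lies in $\CC[L^{\pm1}]$.

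Next I would substitute these into the bracketed bilinear expression
$$\frac{N_{0p(v_1)}(z)N_{0p(v_2)}(w)+N_{1p(v_1)}(z)N_{2p(v_2)}(w)+N_{2p(v_1)}(z)N_{1p(v_2)}(w)}{z+w}-\frac{1}{z+w}$$
and expand. Each product of two $N$-entries is at most linear in $X$ coming from each factor, so a priori one might worry about an $X^2$ term; but an $X^2$ term can only arise from the product $N_{1\cdot}(z)\cdot N_{1\cdot}(w)$, and that product does not occur in the above sum — the middle terms pair a ``$1$''-index against a ``$2$''-index, and $N_{2\cdot}$ is $X$-free by Lemma \ref{RR2}. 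Hence the whole bracket is of degree at most $1$ in $X$, with the $X^0$ part lying in $\CC[L^{\pm1}]$ (after extracting the relevant coefficient of $z^{b_1-1}w^{b_2-1}$, which is a finite $\CC[L^{\pm1}]$-linear operation). This establishes $\text{Cont}^{\mathsf{A}}_\Gamma(e)\in\CC[L^{\pm1}][X]$ with $X$-degree $\le 1$; the degree is exactly $1$ generically because the coefficient, computed below, is nonzero.

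Finally, for the explicit coefficient of $X$: the linear-in-$X$ part of the bracket comes only from the two middle terms, and in each the $X$ is supplied by the $N_{1\cdot}$ factor with coefficient $\frac{\widetilde P^{k-1}_{2p(v_\bullet)}}{L}$ (times the monomial renormalization factors relating $N$ to $\widetilde P$), while the accompanying $N_{2\cdot}$ factor contributes its $X$-free value $\widetilde P^{l-1}_{2p(v_\bullet)}$ (again up to monomials). Collecting the sign $(-1)^{k+l}$ from the $(-z)$ substitution and the overall factor $3$ from Proposition \ref{DCT}, and bookkeeping the powers of $\lambda_{p(v_1)}$, $\lambda_{p(v_2)}$ coming from \eqref{RT} and the normalization \eqref{nnn34}–\eqref{ll99}, one lands on
$$(-1)^{k+l}\,\frac{3\,\widetilde P^{k-1}_{2p(v_1)}\widetilde P^{l-1}_{2p(v_2)}}{L\,\lambda_{p(v_1)}^{k-1}\lambda_{p(v_2)}^{l-2}}\,.$$
The main obstacle I anticipate is purely organizational rather than conceptual: correctly matching the various normalization conventions — the $\zeta^{kj}$ and $\frac{L}{C_1}$-type factors in \eqref{RT}, the factor $3$ from the pairing \eqref{IPP}, and the asymmetry in the exponents of $\lambda_{p(v_1)}^{k-1}$ versus $\lambda_{p(v_2)}^{l-2}$ — so that the coefficient comes out exactly as stated; the degree bound itself is immediate once Lemma \ref{RR2} is in hand.
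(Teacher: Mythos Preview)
Your proposal is correct and follows essentially the same approach as the paper: the paper's proof is the single sentence ``The claims follow from Proposition \ref{DCT} together with Lemmas \ref{RR} and \ref{RR2},'' and you have unpacked exactly that, with the key structural observation that only the $N_{1\cdot}$ entries carry $X$ (linearly, via Lemma \ref{RR2}) and they pair against $X$-free $N_{2\cdot}$ entries so no $X^2$ term can arise. Your caveat about the bookkeeping of $\zeta$- and $C_1/L$-factors is apt but not a gap; those factors cancel in the $N_{1\cdot}N_{2\cdot}$ products precisely because the normalizations in \eqref{RT} for $i=1$ and $i=2$ are reciprocal in $C_1/L$.
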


\begin{proof}
 The claims follow from Proposition \ref{DCT} together with Lemmas \ref{RR} and \ref{RR2}.
\end{proof}

Similarly, using the contribution formula of Proposition \ref{Leg}, we obtain the following result.

\begin{Lemma}
 The leg contributions satisfy: 
 \begin{itemize}
     \item when the insertion at the marking $l$ is $\phi_0$,
     $$\text{\em Cont}^{\mathsf{A}}_\Gamma(l)\in \CC(\lambda_0,\lambda_1,\lambda_2)[L^{\pm 1}]\,,$$
     \item when the insertion at the marking $l$ is $\phi_1$,
     $$C_1\cdot\text{\em Cont}^{\mathsf{A}}_\Gamma(l)\in \CC(\lambda_0,\lambda_1,\lambda_2)[L^{\pm 1}]\,,$$
     \item when the insertion at the marking $l$ is $\phi_2$,
     $$\frac{1}{C_1}\cdot\text{\em Cont}^{\mathsf{A}}_\Gamma(l)\in \CC(\lambda_0,\lambda_1,\lambda_2)[L^{\pm 1},X]\,.$$
 \end{itemize}
\end{Lemma}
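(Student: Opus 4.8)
The plan is to argue directly from the leg contribution formula of Proposition \ref{Leg}, which expresses $\text{Cont}^{\mathsf{A}}_\Gamma(l)$ as the $z^{\mathsf{A}(l)-1}$-coefficient of $[(-1)^{\mathsf{A}(l)-1}(\mathsf{R}^{[\CC^3/\ZZ_3]}(z))^t\cdot\mathbf{\Psi}]_{\mathsf{Inv}(k_l)\,p(l)}$, and to track how the normalizations \eqref{RT} convert the entries $P^k_{ij}$ into the $\widetilde{P}^k_{ij}$, for which we already have the structural results in Lemmas \ref{RR} and \ref{RR2}. First I would write $\mathsf{R}^{[\CC^3/\ZZ_3]}(z)$ in terms of $\widetilde{\mathsf{R}}^{[\CC^3/\ZZ^3]}(z)$ and the constant Bernoulli factor $(\mathbf{\Psi}|_{\psit=0})\cdot\mathsf{B}(z)\cdot(\mathbf{\Psi}|_{\psit=0})^{-1}$ via the Corollary following Proposition \ref{ORR}; the constant prefactor contributes only elements of $\CC$ (after the specialization) and hence does not affect which ring the coefficients lie in, so it suffices to analyze $(\widetilde{\mathsf{R}}^{[\CC^3/\ZZ^3]}(z))^t\cdot\mathbf{\Psi}$, equivalently the matrix $\mathbf{\Psi}^{-1}\widetilde{\mathsf{R}}^{[\CC^3/\ZZ^3]}$ whose entries are exactly the $P^k_{ij}$.

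Next I would split into the three cases according to the insertion $\phi_{k_l}$, which via $\mathsf{Inv}$ selects the row index $i=\mathsf{Inv}(k_l)$ of $(\mathsf{R}^{[\CC^3/\ZZ_3]})^t\cdot\mathbf{\Psi}$, i.e.\ the index $i$ of $P^k_{ij}$. For $\phi_0$ we have $\mathsf{Inv}(0)=0$, so the relevant entries are governed by $\widetilde{P}^k_{0j}$; undoing the normalization \eqref{RT} (which for the $0$-row is just multiplication by the constant $\zeta^{-kj}$) and using Lemma \ref{RR} gives membership in $\CC(\lambda_0,\lambda_1,\lambda_2)[L^{\pm1}]$ — the weights appearing only through the $\mathbf{\Psi}$ entries and the constant Bernoulli factor before specialization. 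For $\phi_1$, $\mathsf{Inv}(1)=2$, so the relevant entries involve $\widetilde{P}^k_{2j}$; by \eqref{RT} we have $P^k_{2j}= -\frac{L\zeta^{j}}{C_1}\zeta^{-kj}\widetilde{P}^k_{2j}$, and Lemma \ref{RR2} gives $\widetilde{P}^k_{2j}\in\CC[L^{\pm1}]$, so $C_1\cdot P^k_{2j}\in\CC[L^{\pm1}]$ and hence $C_1\cdot\text{Cont}^{\mathsf{A}}_\Gamma(l)\in\CC(\lambda_0,\lambda_1,\lambda_2)[L^{\pm1}]$. For $\phi_2$, $\mathsf{Inv}(2)=1$, so the relevant entries involve $\widetilde{P}^k_{1j}$; by \eqref{RT} we have $P^k_{1j}= -\frac{C_1\zeta^{2j}}{L}\zeta^{-kj}\widetilde{P}^k_{1j}$, and Lemma \ref{RR2} writes $\widetilde{P}^k_{1j}=\widetilde{Q}^k_{1j}+\frac{\widetilde{P}^{k-1}_{2j}}{L}X$ with $\widetilde{Q}^k_{1j},\widetilde{P}^{k-1}_{2j}\in\CC[L^{\pm1}]$, so $\frac{1}{C_1}\cdot P^k_{1j}= -\frac{\zeta^{2j}}{L}\zeta^{-kj}(\widetilde{Q}^k_{1j}+\frac{\widetilde{P}^{k-1}_{2j}}{L}X)\in\CC[L^{\pm1},X]$, whence $\frac{1}{C_1}\cdot\text{Cont}^{\mathsf{A}}_\Gamma(l)\in\CC(\lambda_0,\lambda_1,\lambda_2)[L^{\pm1},X]$.

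The routine bookkeeping — keeping track of the various $\zeta$-powers, the transpose, and absorbing the constant Bernoulli factor — I would carry out in the same style as the proof of Lemma \ref{Edge}, citing Proposition \ref{Leg} together with Lemmas \ref{RR} and \ref{RR2} exactly as that proof cites Proposition \ref{DCT}. The one point requiring slight care, and what I expect to be the main (though still minor) obstacle, is the dependence on the equivariant weights $\lambda_0,\lambda_1,\lambda_2$: unlike the vertex and edge contributions of Proposition \ref{DCT}, the leg contributions are not claimed to lie in $\CC[L^{\pm1}]$ or $\CC[L^{\pm1},X]$ but rather over the field $\CC(\lambda_0,\lambda_1,\lambda_2)$, reflecting that the insertions $\phi_0,\phi_1,\phi_2$ are honest orbifold cohomology classes and the relevant $\mathbf{\Psi}$ entries — and the dual basis rescalings $\phi^i$ — carry weight dependence that only collapses to constants after the specialization \eqref{specc}. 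So I would be careful to state the result before specialization with coefficients in $\CC(\lambda_0,\lambda_1,\lambda_2)$, exactly as written, and note that after \eqref{specc} these become plain constants, consistent with the usage in the sequel.
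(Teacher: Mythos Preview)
Your proposal is correct and follows essentially the same approach as the paper. The paper's own argument is a single sentence --- ``Similarly, using the contribution formula of Proposition \ref{Leg}, we obtain the following result'' --- and your plan does precisely this: you invoke Proposition \ref{Leg} for the leg contribution, unwind the normalizations \eqref{RT} to pass from $P^k_{ij}$ to $\widetilde{P}^k_{ij}$, and apply Lemmas \ref{RR} and \ref{RR2}, exactly mirroring the proof of Lemma \ref{Edge}.
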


\subsection{Proof of Theorem \ref{ooo}} By definition, we have
\begin{align}\label{ffww}
    A_2(\psit)=\frac{1}{L^3}\left(3X-3-\frac{L^3}{18}\right)\, .
\end{align}
Hence, claim (i) of Theorem \ref{ooo},
$$\mathcal{F}^{[\CC^3/\ZZ_3]}_g(\psit)\in \CC[L^{\pm 1}][A_2]\,,$$
follows from Proposition \ref{DCT} and Lemma \ref{Vertex}-\ref{Edge}.  Claim (ii), $${\text{$\mathcal{F}^{[\CC^3/\ZZ_3]}_g$ has at most degree $3g-3$ with respect to $A_2$}}\,,$$
holds since a stable graph of genus $g$ has at most $3g-3$ edges. Since
$$\frac{\partial}{\partial T}=\frac{\psit}{C_1}\frac{\partial}{\partial \psit}\,,$$
claim (iii),
\begin{align}\label{DC1}
    \frac{\partial^k \mathcal{F}^{[\CC^3/\ZZ_3]}_g}{\partial T^k}(\psit)\in\CC[L^{\pm 1}][A_2][C_1^{-1}]\,,
\end{align}
follows since the ring 
$$\CC[L^{\pm 1}][A_2]=\CC[L^{\pm 1}][X]$$
is closed under the action of the differential operator
$$\mathsf{D}=\psit\frac{\partial}{\partial \psit}$$
by Lemma \ref{Relation}. The degree of $C_1^{-1}$ in \eqref{DC1} is $1$ which yields claim (iv).  \qed

The same argument can also be applied to potentials with insertions to immediately
yield the parallel result for part (i).

\vspace{6pt}
\noindent{\bf Theorem $\mathbf{1'}$.}
{\em After the change of variables given by the inverse of mirror map,  
$$\mathcal{F}_{g,n}^{[\CC^3/\ZZ_3]}(\phi_{a_1},\ldots,\phi_{a_n}) (\psit) \in \mathbb{C}[L^{\pm1}][A_2, C_1^{\pm}]$$
for  $2g-2+n> 0$.}

\subsection{Proof of Theorem 2.}

Let $\Gamma \in \mathsf{G}^{\text{Dec}}_{g,0}(3)$ be a decorated graph. Let us fix an edge $f\in\mathsf{E}(\Gamma)$:
\begin{itemize}
 
 \item [$\bullet$]if $\Gamma$ is connected after deleting $f$, denote the resulting graph by
 $$\Gamma^0_f\in \mathsf{G}^{\text{Dec}}_{g-1,2}(3),$$
 
 \item [$\bullet\bullet$]if $\Gamma$ is disconnected after deleting $f$, denote the resulting two graphs by
 $$\Gamma^1_f\in\mathsf{G}^{\text{Dec}}_{g_1,1}(3)\ \ \  \text{and} \ \ \ \Gamma^2_f\in\mathsf{G}^{\text{Dec}}_{g_2,1}(3)$$
where $g=g_1+g_2$.

\end{itemize}

There is no canonical order for the 2 new markings. We will always sum over the 2 labellings. So more precisely, the graph $\Gamma^0_f$ in case $\bullet$ should be viewed as sum of 2 graphs
$$\Gamma^0_{f,(1,2)}+\Gamma^0_{f,(2,1)}\,.$$
Similarly, in case $\bullet\bullet$, we will sum over the ordering of $g_1$ and $g_2$. As usually, the summation will be later compensated by a factor of $\frac{1}{2}$ in the formulas.

 By Proposition \ref{DCT}, we have the following formula for the contribution of the graph $\Gamma$ to the Gromov-Witten theory of $[\CC^3/\ZZ_3]$,
  $$\text{Cont}_\Gamma=\frac{1}{\text{Aut}(\Gamma)}\sum_{\mathsf{A}\in \ZZ^{\mathsf{F}}_{>0}}\prod_{v\in\mathsf{V}}\text{Cont}^{\mathsf{A}}_{\Gamma}(v)\prod_{e\in\mathsf{E}}\text{Cont}^{\mathsf{A}}_\Gamma(e)\, .  $$

Let $f$ connect the vertices $v_1,v_2\in \mathsf{V}(\Gamma)$. Let the $\mathsf{A}$-values of the respective half-edges be $(k,l)$. By Lemma \ref{Edge}, we have
\begin{align}\label{EA}
    \frac{\partial \text{Cont}^{\mathsf{A}}_\Gamma(f)}{\partial X}=(-1)^{k+l}\frac{3\widetilde{P}^{k-1}_{2 p(v_1)}\widetilde{P}^{l-1}_{2 p (v_2)}}{L\lambda_{p(v_1)}^{k-2}\lambda_{p(v_2)}^{l-2}}\, .
\end{align}

\begin{itemize}
 \item If $\Gamma$ is connected after deleting $f$, we have
 \begin{align*}
     \frac{1}{|\text{Aut}(\Gamma)|}\sum_{\mathsf{A}\in \ZZ^{\mathsf{F}_{\ge 0}}}\left(\frac{L^3}{3C_1^2}\right)\frac{\partial \text{Cont}^{\mathsf{A}_\Gamma(f)}}{\partial X}\prod_{v\in\mathsf{V}}\text{Cont}^{\mathsf{A}_\Gamma(v)}\prod_{e\in\mathsf{E}}\text{Cont}^{\mathsf{A}}_\Gamma(e)\\
     =\frac{1}{2}\text{Cont}_{\Gamma^0_f}(\phi_1,\phi_1)\, .
 \end{align*}
\end{itemize} 
The derivation is simply by applying \eqref{EA} on the left and Proposition \ref{Leg} on the right.
\begin{itemize} 
 \item If $\Gamma$ is disconnected after deleting $f$, we obtain
 \begin{align*}
     \frac{1}{|\text{Aut}(\Gamma)|}\sum_{\mathsf{A}\in \ZZ^{\mathsf{F}_{\ge 0}}}\left(\frac{L^3}{3C_1^2}\right)\frac{\partial \text{Cont}^{\mathsf{A}_\Gamma(f)}}{\partial X}\prod_{v\in\mathsf{V}}\text{Cont}^{\mathsf{A}_\Gamma(v)}\prod_{e\in\mathsf{E}}\text{Cont}^{\mathsf{A}}_\Gamma(e)\\
     =\frac{1}{2}\text{Cont}_{\Gamma^1_f}(\phi_1)\text{Cont}_{\Gamma^2_f}(\phi_1)
 \end{align*}

\end{itemize}
by the same method.
 By combining the above two equations for all the edges of all the graphs $\Gamma \in \mathsf{G}^{\text{Dec}}_{g}(3)$ and using the vanishing
 $$\frac{\partial \text{Cont}^{\mathsf{A}}_\Gamma(v)}{\partial X}=0 $$ of Lemma \eqref{Vertex}, we obtain
 \begin{multline}\label{EIP}
     \left(\frac{L^3}{3C_1^2}\right) \frac{\partial}{\partial X}\lan\lan\ran\ran^{[\CC^3/\ZZ_3]}_{g}=\\
     \frac{1}{2}\sum_{i=1}^{g-1}\lan\lan\phi_1\ran\ran^{[\CC^3/\ZZ_3]}_{g-i,1}\lan\lan\phi_1\ran\ran^{[\CC^3/\ZZ_3]}_{i,1}+\frac{1}{2}\lan\lan\phi_1,\phi_1\ran\ran^{[\CC^3/\ZZ_3]}_{g-1,2}.
 \end{multline}
We have followed here the notation of Section \ref{vvv234}. The equality \eqref{EIP} holds in the ring $\CC[L^{\pm 1}][A_2,C_1^{-1}]$.

Since $A_2=\frac{1}{L^3}\left(3X-3-\frac{L^3}{18}\right)$ and $\lan \lan\,\ran \ran^{[\CC^3/\ZZ_3]}_g=\mathcal{F}^{[\CC^3/\ZZ_3]}_g$, the left side of \eqref{EIP} is, by the chain rule,
$$\frac{1}{C_2}\frac{\partial \mathcal{F}^{[\CC^3/\ZZ_3]}_g}{\partial A_2}\in\CC[L^{\pm 1}][A_2,C_1^{-1}]\,.$$
On the right side of \eqref{EIP}, we have
$$\lann \phi_1\rann^{[\CC^3/\ZZ_3]}_{g-i,1}=\frac{\partial \mathcal{F}^{[\CC^3/\ZZ_3]}_{g-i}}{\partial T}\in \CC[[\psit]]\,.$$
Similarly, we obtain
\begin{eqnarray*}
    \lann \phi_1\rann^{[\CC^3/\ZZ_3]}_{i,1}&=&\frac{\partial \mathcal{F}^{[\CC^3/\ZZ_3]}_i}{\partial T}\in \CC[[\psit]]\,,\\
    \lann \phi_1,\phi_1\rann^{[\CC^3/\ZZ_3]}_{g-1,2}&=&\frac{\partial^2 \mathcal{F}^{[\CC^3/\ZZ_3]}_{g-1}}{\partial T^2}\in \CC[[\psit]]\,.
\end{eqnarray*}
Together, the above equations transform \eqref{EIP} into exactly the holomorphic anomaly equation of Theorem \ref{ttt},
$$\frac{1}{C_1^2}\frac{\partial \mathcal{F}_g^{[\CC^3/\ZZ_3]}}{\partial{A_2}}
= \frac{1}{2}\sum_{i=1}^{g-1} 
\frac{\partial \mathcal{F}_{g-i}^{[\CC^3/\ZZ_3]}}{\partial{T}}
\frac{\partial \mathcal{F}_i^{[\CC^3/\ZZ_3]}}{\partial{T}}
+
\frac{1}{2}
\frac{\partial^2 \mathcal{F}_{g-1}^{[\CC^3/\ZZ_3]}}{\partial{T}^2}\,
,$$
as an equality in $\CC[[\psit]]$.

The series $L$ and $A_2$ are expected to be algebraically independent. Since we do not have a proof of the independence, to lift holomorphic anomaly equation to the equality
$$\frac{1}{C_1^2}\frac{\partial \mathcal{F}_g^{[\CC^3/\ZZ_3]}}{\partial{A_2}}
= \frac{1}{2}\sum_{i=1}^{g-1} 
\frac{\partial \mathcal{F}_{g-i}^{[\CC^3/\ZZ_3]}}{\partial{T}}
\frac{\partial \mathcal{F}_i^{[\CC^3/\ZZ_3]}}{\partial{T}}
+
\frac{1}{2}
\frac{\partial^2 \mathcal{F}_{g-1}^{[\CC^3/\ZZ_3]}}{\partial{T}^2}$$
in the ring $\CC[L^{\pm 1}][A_2,C_1^{-1}]$, we must prove the equalities
\begin{equation*}
    \lann \phi_1\rann^{[\CC^3/\ZZ_3]}_{g-i,1}=\frac{\partial \mathcal{F}^{[\CC^3/\ZZ_3]}_{g-i}}{\partial T}\,,\,\,\,\, \, \lann \phi_1\rann^{[\CC^3/\ZZ_3]}_{i,1}=\frac{\partial \mathcal{F}^{[\CC^3/\ZZ_3]}_i}{\partial T}\in \CC[[\psit]]\,,
\end{equation*}
$$   \lann \phi_1,\phi_1\rann^{[\CC^3/\ZZ_3]}_{g-1,2}=\frac{\partial^2 \mathcal{F}^{[\CC^3/\ZZ_3]}_{g-1}}{\partial T^2}\in \CC[[\psit]]$$
hold in the ring $\CC[L^{\pm 1}][A_2,C_1^{-1}]$. The lifting follow from the argument in Section 7.3 in \cite{KL}.

 We do not study the genus $1$ unpointed series $\mathcal{F}^{[\CC^3/\ZZ_3]}$ in the paper, so we take
 \begin{align*}
     \lann \phi_1\rann^{[\CC^3/\ZZ_3]}_{g-i,1}&=\frac{\partial \mathcal{F}^{[\CC^3/\ZZ_3]}_{g-i}}{\partial T}\,,\\
     \lann \phi_1\rann^{[\CC^3/\ZZ_3]}_{i,1}&=\frac{\partial \mathcal{F}^{[\CC^3/\ZZ_3]}_i}{\partial T}\in \CC[[\psit]]
 \end{align*}
as definitions of the right side in the genus $1$ case. There is no difficulty in calculating these series explicitly using Proposition \ref{Leg},
\begin{align*}
    \frac{\partial \mathcal{F}^{[\CC^3/\ZZ_3]}_1}{\partial T}&=\frac{1}{18C_1}L^3A_2\,,\\
    \frac{\partial^2 \mathcal{F}^{[\CC^3/\ZZ_3]}_1}{\partial T^2}&=\frac{1}{C_1}\mathsf{D}\left(\frac{1}{18C_1}L^3A_2\right)\,.
\end{align*}

\subsection{Bounding the degree}

 The degrees in $L$ of the terms of
 $$\mathcal{F}^{[\CC^3/\ZZ_3]}_{g}\in \mathbb{C}[L^{\pm1}][A_2]$$ for $[\CC^3/\ZZ_3]$ 
 always fall in the range
 \begin{equation}\label{ds33}
 [9-9g, 6g-6]\, .
 \end{equation}
In particular, the constant (in $A_2$) term of $\mathcal{F}^{[\CC^3/\ZZ_3]}_{g}$
missed by the
holomorphic anomaly equation for $[\CC^3/\ZZ_3]$ is a
Laurent polynomial in $L$ with degrees in the range
\eqref{ds33}.
The bound \eqref{ds33} is a consequence of Proposition \ref{DCT}, 
 the vertex and edge analysis of Section \ref{svel}, and the
 following result.
 
\begin{Lemma}
 The degrees in $L$ of $\widetilde{P}^k_{ij}$ fall in the range
 \begin{align*}
     [-i,2k]\, .
 \end{align*}
\end{Lemma}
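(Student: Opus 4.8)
The plan is to prove the degree bound $[-i, 2k]$ for $\widetilde{P}^k_{ij}$ by induction on $k$, using the recursion of Lemma \ref{aass} together with the relation $\DD L = L(\tfrac{L^3}{27}+1)$ of Lemma \ref{Relation}. The base case is $k=0$: by the initial conditions \eqref{RTI} and the displayed formula $\widetilde{P}^0_{0j}=1$, together with the $k=0$ instances of Lemma \ref{aass} (or direct computation from \eqref{RT}), one checks that $\widetilde{P}^0_{0j}$, $\widetilde{P}^0_{1j}$, $\widetilde{P}^0_{2j}$ all lie in $\CC[L^{\pm 1}]$ with degrees in $[0,0]$, $[-1,0]$, $[-2,0]$ respectively, which is consistent with the claimed range (reading $i$ as the first subscript). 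Before running the induction it is worth recording two elementary facts: first, $\DD$ raises the $L$-degree of a monomial $L^a$ by at most $3$ (since $\DD L^a = aL^a(\tfrac{L^3}{27}+1)$ has degrees in $\{a, a+3\}$), and second, the operator $\DD$ does not lower the top degree in any controlled way — so the upper bound is the delicate side and the lower bound is comparatively easy.

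Next I would carry out the inductive step. Assume the degree bounds hold for all $\widetilde{P}^k_{ij}$; I want them for $k+1$. The three equations of Lemma \ref{aass} express $\widetilde{P}^{k+1}_{2j}$, $\widetilde{P}^{k+1}_{1j}$, $\widetilde{P}^{k+1}_{0j}$ in terms of $\widetilde{P}^{k+1}_{0j}$ (for the first), lower-$k$ data, and the quantities $\tfrac{\DD L}{L^2}$, $\tfrac{X}{L}$. Since $X \in \CC[L^{\pm1}][X]$ is a free variable, I first need to know that each $\widetilde{P}^k_{ij}$ actually lies in $\CC[L^{\pm 1}]$ (no $X$) — this is Lemmas \ref{RR} and \ref{RR2}, which I may assume. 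Given that, $\tfrac{\DD L}{L^2} = L^{-1}(\tfrac{L^3}{27}+1)$ has degrees in $[-1,2]$, and the term $\tfrac{X}{L}\widetilde{P}^k_{2j}$ — wait, here one must be careful: after substituting the explicit value of $X$ in terms of $L$ and $A_2$, or rather after using that $\widetilde{P}^k_{ij}\in\CC[L^{\pm1}]$ forces the $X$-contributions to cancel in the recursion (as in Lemma \ref{RR2}), the surviving $L$-only expressions are what we track. Concretely: $\widetilde{P}^{k+1}_{2j} = \widetilde{P}^{k+1}_{0j} - L^{-1}\DD\widetilde{P}^k_{0j}$; by the inductive hypothesis $\DD\widetilde{P}^k_{0j}$ has degree $\le 2k+3$, so $L^{-1}\DD\widetilde{P}^k_{0j}$ has degree $\le 2k+2 = 2(k+1)$, and similarly its lowest degree is $\ge -1 = -0 - 1$, consistent with first subscript $2$ giving range $[-2, 2(k+1)]$ once we also bound $\widetilde{P}^{k+1}_{0j}$. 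So the three equations must be solved as a coupled system, propagating the bound $2(k+1)$ for the top degree and $-i$ for the bottom through all three simultaneously; the lowest-degree claim $[-i,\cdot]$ follows because multiplication by $L^{-1}$ and by $\tfrac{\DD L}{L^2}$ (degrees $\ge -1$) only ever decreases the minimum degree by the amount already accounted for in the index shift between rows $0,1,2$.

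The main obstacle I anticipate is \emph{closing the loop in the coupled recursion for the upper bound}: the first equation of Lemma \ref{aass} gives $\widetilde{P}^{k+1}_{2j}$ in terms of $\widetilde{P}^{k+1}_{0j}$, the second gives $\widetilde{P}^{k+1}_{1j}$ in terms of $\widetilde{P}^{k+1}_{2j}$, and the third gives $\widetilde{P}^{k+1}_{0j}$ in terms of $\widetilde{P}^{k+1}_{1j}$ — so naively the bound for $\widetilde{P}^{k+1}_{0j}$ depends on itself. The resolution is the initial condition \eqref{RTI}: the recursion \eqref{R}, read as a first-order ODE in $\psit$ with the vanishing boundary condition at $\psit=0$, determines $\widetilde{P}^{k+1}_{ij}$ uniquely by integration, and one tracks the $L$-degree through that integration (using $\DD L/L = \tfrac{L^3}{27}+1$ to see that $\int (\cdot)\, \tfrac{d\psit}{\psit}$ of an expression of $L$-degree $d$ has $L$-degree at most $d$, since integrating $L^a \tfrac{d\psit}{\psit} = L^a \tfrac{dL}{L(L^3/27+1)}$ against the partial-fraction decomposition in $L$ does not raise the degree). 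This is exactly the mechanism of \cite[Section 1]{ZaZi} already invoked for Lemma \ref{RR}, and I would cite it in the same form. With that in hand, the bound $2(k+1)$ propagates through one full cycle of the three equations and the induction closes; the degree range $[9-9g,6g-6]$ for $\mathcal{F}^{[\CC^3/\ZZ_3]}_g$ then follows by feeding these bounds into the vertex contribution (Lemma \ref{Vertex}, built from $\widetilde{P}^k_{0j}\in\CC[L^{\pm1}]$), the edge contribution (Lemma \ref{Edge}, whose $X$-coefficient involves $\widetilde{P}^{k-1}_{2p(v)}/L\cdot(\cdots)$), and counting that a genus-$g$ stable graph has at most $3g-3$ edges and the Euler-characteristic bookkeeping in the $\gG(\widetilde e,\widetilde e)^{1-g}$ vertex factors.
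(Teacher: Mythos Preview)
Your approach is essentially the paper's, but you have made the organization harder than necessary and introduced a genuine confusion about the role of $X$.

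The paper does \emph{not} run a simultaneous induction on $k$ over all three rows. Instead it first establishes the bound $[0,2k]$ for $\widetilde{P}^k_{0j}$ for \emph{all} $k$ at once, by the Zagier--Zinger argument (which you correctly identify as the key mechanism). Once the $i=0$ row is known in full, the circularity you worry about disappears: the first equation of Lemma~\ref{aass}, $\widetilde{P}^{k}_{2j}=\widetilde{P}^{k}_{0j}-L^{-1}\DD\widetilde{P}^{k-1}_{0j}$, gives the $i=2$ bound directly from the $i=0$ bound (since $\DD$ raises top $L$-degree by at most $3$ and dividing by $L$ drops it by $1$), and then the second equation gives the $i=1$ bound from the $i=2$ bound. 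This is what the paper means when it says the cases $i=1,2$ ``follow from Lemma~\ref{RR2}''. Your detour through a coupled inductive system, followed by a return to Zagier--Zinger to close the loop, arrives at the same place but obscures the logic.

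Separately, your assertion that ``each $\widetilde{P}^k_{ij}$ actually lies in $\CC[L^{\pm1}]$ (no $X$)'' is false for $i=1$: Lemma~\ref{RR2} says explicitly $\widetilde{P}^k_{1j}=\widetilde{Q}^k_{1j}+\tfrac{\widetilde{P}^{k-1}_{2j}}{L}X$ with $\widetilde{Q}^k_{1j}\in\CC[L^{\pm1}]$, so there \emph{is} an $X$-term and it does not cancel. The lemma's degree claim is about $L$-degrees in $\CC[L^{\pm1}][X]$, so you must bound the $L$-degrees of both $\widetilde{Q}^k_{1j}$ and the $X$-coefficient $L^{-1}\widetilde{P}^{k-1}_{2j}$ separately; both fall out immediately from the $i=0$ and $i=2$ bounds already established. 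Your final paragraph about the range $[9-9g,6g-6]$ is the application of the lemma, not part of its proof.
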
 

\begin{proof}
 The proof for the functions $\widetilde{P}^k_{0j}$ follows from the arguments of \cite{ZaZi}. The proof for $\widetilde{P}^k_{1j}$ and $\widetilde{P}^k_{2j}$ follows  from Lemma \ref{RR2}.
\end{proof}

For $\mathcal{F}^{[\CC^3/\ZZ_3]}_{2}$ (resp. $\mathcal{F}^{[\CC^3/\ZZ_3]}_{3}$),  the $L$ degrees can be seen to vary between
 $0$ and $6$ (resp. $0$ and $12$) in the formula in Section \ref{EC} when
 rewritten in terms of $A_2$ using \eqref{ffww}.
 The sharper range
\begin{equation*}
 [0, 6g-6]\, 
 \end{equation*}
 proposed in \cite{ASYZ} 
for the $L$ degrees of $\mathcal{F}^{[\CC^3/\ZZ_3]}_{g}$
is found in examples. How to derive the sharper bound
from properties of the functions $\widetilde{P}^k_{ij}$ is
an interesting question.

\section{Crepant resolution correspondence}

\subsection{$\mathbf{R}$-matrix of $K\PP^2$}
For Gromov-Witten theories
in the torus equivariant setting, Givental proved a reconstruction result in the semisimple case
using the localization of the virtual class \cite{GrP}. We have applied the method to the stable quotient theory of local $\PP^2$ in \cite{LP}. The results are summarized here.

Let  $H$ be the hyperplane class in $H_{\mathsf{T}}^*(\PP^2)$, and let
\begin{equation}\label{dpp2}
\{1, H, H^2\}\in H_\mathsf{T}^*(\PP^2) 
\end{equation}
be a basis.
The inner product $\mathbf{g}^{K\PP^2}$ in the basis 
\eqref{dpp2} is given by:
  \begin{align}\label{IP2}
   \mathbf{g}^{K\PP^2}=-\frac{1}{3}
  \left[ {\begin{array}{ccc}
   1         & 0 & 0 \\
   0         & 0 & 1 \\
   0         & 1 & 0 \\
  \end{array} } \right]\,. 
\end{align} 


For $\gamma\in H^*_\mathsf{T}(K\PP^2)$, we define a $q$-series $\overline{\mathds{S}}_i$ using quasimap invariants of $K\PP^2$ by
\begin{align*}
    \overline{\mathds{S}}
    _i(\gamma)=e^{K\PP^2}_i\left \lan \frac{\phi^{K\PP^2}_i}{z-\psi},\gamma\right \ran^{K\PP^2}_{0,2}\,.
\end{align*}
We follow here the notation{\footnote{In particular, the formulas
hold after the specialization \eqref{specc} of equivariant
parameters.}} of \cite[Section 3]{LP} 
where $$e^{K\PP^2}_i=-3(1-\lambda_i)(1-\lambda_i^2)=-9$$ 
is the equivariant Euler class of the tangent space of $K\PP^2$ at the 
fixed point $p_i$, and $\phi^{K\PP^2}_i$ is the canonical basis element
$$\phi^{K\PP^2}_i=\frac{-3\lambda_i\prod_{j\ne i}(H-\lambda_j)}{e^{K\PP^2}_i}\,.$$

The following asymptotic form{\footnote{The notation $\widetilde{P}^{k,K\PP^2}_{i0}$ differs slightly from $R_{ik}$ in \cite{LP}. More precisely, the correspondence of the index $i$ between $\widetilde{P}^{k,K\PP^2}_{i0}$ and $R_{ik}$ in \cite{LP} is $\{0,1,2\}\rightarrow\{0,2,1\}$. The difference will not have any effect in the higher genus formula.}} 
 of the series $\overline{\mathds{S}}_j(H^i)$ plays
a crucial role
\cite[Section 3.4]{LP}:
\begin{align}\label{ASKP2}
    \nonumber\overline{\mathds{S}}_j(1)=e^{\frac{\mu \zeta_j}{z}} \left( \widetilde{P}_{00}^{0,K\PP^2}+\widetilde{P}_{00}^{1,K\PP^2}(\frac{z}{\zeta^j})+\widetilde{P}_{00}^{2,K\PP^2}(\frac{z}{\zeta^j})^2+\dots\right)\, ,\\
    \overline{\mathds{S}}_j(H)=e^{\frac{\mu \zeta_j}{z}} \frac{L^{K\PP^2} \zeta^{j}}{C^{K\PP^2}_1}\left( \widetilde{P}_{20}^{0,K\PP^2}+\widetilde{P}_{20}^{1,K\PP^2}(\frac{z}{\zeta^j})+\widetilde{P}_{20}^{2,K\PP^2}(\frac{z}{\zeta^j})^2+\dots\right)\, ,\\
    \nonumber\overline{\mathds{S}}_j(H^2)=e^{\frac{\mu \zeta_j}{z}} \frac{(L^{K\PP^2})^2 \zeta^{2j}}{C^{K\PP^2}_1 C^{K\PP^2}_2}\left( \widetilde{P}_{10}^{0,K\PP^2}+\widetilde{P}_{10}^{1,K\PP^2}(\frac{z}{\zeta^j})+\widetilde{P}_{10}^{2,K\PP^2}(\frac{z}{\zeta^j})^2+\dots\right)\, ,
\end{align}
for $0\leq j\leq 2$.
Here, $\mu(q)=\int_{0}^q (L^{K\PP^2}(x)-1)\frac{dx}{x}$.
Define
$$X^{K\PP^2}=\frac{\mathsf{D}^{K\PP^2}C^{K\PP^2}_1}{C^{K\PP^2}_1}\,,$$
where $\mathsf{D}^{K\PP^2}=q\frac{d}{dq}\,.$
The series $\widetilde{P}_{ij}^{k,K\PP^2}$ satisfy following system of equations for $j=0, 1, 2$:
\begin{align}\label{RP2}
    \nonumber \widetilde{P}_{2j}^{k+1,K\PP^2}&=\widetilde{P}_{0j}^{k+1,K\PP^2}+\frac{\mathsf{D}^{K\PP^2}\widetilde{P}_{0j}^{k,K\PP^2}}{L^{K\PP^2}}\,,\\
    \widetilde{P}_{1j}^{k+1,K\PP^2}&=\widetilde{P}_{2j}^{k+1,K\PP^2}+\frac{\mathsf{D}^{K\PP^2}\widetilde{P}_{2j}^{k,K\PP^2}}{L^{K\PP^2}}+\left(\frac{\mathsf{D}^{K\PP^2}L^{K\PP^2}}{(L^{K\PP^2})^2}-\frac{X^{K\PP^2}}{L^{K\PP^2}}\right)\widetilde{P}_{2j}^{k,K\PP^2}\,,\\
    \nonumber \widetilde{P}_{0j}^{k+1,K\PP^2}&=\widetilde{P}_{1j}^{k+1,K\PP^2}+\frac{\mathsf{D}^{K\PP^2}\widetilde{P}_{1j}^{k,K\PP^2}}{L^{K\PP^2}}-\left(\frac{\mathsf{D}^{K\PP^2}L^{K\PP^2}}{(L^{K\PP^2})^2}-\frac{X^{K\PP^2}}{L^{K\PP^2}}\right)\widetilde{P}_{1j}^{k,K\PP^2}\,,
\end{align}
with the initial conditions
\begin{align*}
    &\widetilde{P}_{ij}^{0,K\PP^2}|_{q=0}=1\,,\\
    &\widetilde{P}_{ij}^{k,K\PP^2}|_{q=0}=0\,\,\,\,\text{for}\,\,k \ge 1\,.
\end{align*}

Denote by $\mathsf{Q}(z)$ the matrix with the coefficient of $z^k$ in $(i,j)$
component $P^{k,K\PP2}_{ij}$ are defined by the following equations for $k\ge 0$ and $j=0,1,2$:
\begin{align*}
    \nonumber\widetilde{P}^{k,K\PP^2}_{00} & = P^{k,K\PP2}_{0j} \zeta^{kj}\\
    \widetilde{P}^{k,K\PP^2}_{10} & = \frac{L}{C_1 \zeta^{2j}}P^{k,K\PP^2}_{1j} \zeta^{kj}\\
    \nonumber\widetilde{P}^{k,K\PP2}_{20} & = \frac{C_1}{L \zeta^{j}} P^{k,K\PP2}_{2j}\zeta^{kj}\, .
\end{align*} 
Define a new matrix $\widetilde{\mathsf{R}}^{K\PP^2}(z)$ by
$$\widetilde{\mathsf{R}}^{K\PP^2}(z)=\mathbf{\Psi}\cdot\mathsf{Q}(z),$$
where
$$\mathbf{\Psi}^{K\PP^2}=\frac{-i}{3}
  \left[ {\begin{array}{ccc}
    1 & \frac{L^{K\PP^2}}{C^{K\PP^2}_1}        & \frac{C^{K\PP^2}_1}{L^{K\PP^2}} \\
    1 & \zeta \frac{L^{K\PP^2}}{C^{K\PP^2}_1}  & \zeta^2\frac{C^{K\PP^2}_1}{L^{K\PP^2}} \\
    1 & \zeta^2\frac{L^{K\PP^2}}{C^{K\PP^2}_1} & \zeta\frac{C^{K\PP^2}_1}{L^{K\PP^2}} \\
  \end{array} } \right]\,.$$

Define a new diagonal matrix
$$\mathsf{B}^{K\PP^2}(z)= \text{Diag}\, \big[B^{K\PP^2}_0(z),B^{K\PP^2}_1(z),B^{K\PP^2}_2(z)\big]$$
where
$$B_j^{K\PP^2}(z)={\rm Exp}\left(-\sum_{k=1}^\infty \frac{N_{2k-1,j}}{2k-1}\frac{B_{2k}(0)}{2k} \left(\frac{z}{\zeta^i}\right)^{2k-1}\right)\,$$
and
$N_{k,j}=(-\frac{1}{3\zeta^j})^k+\sum_{l=1}^2 (\frac{1}{\zeta^{j}-\zeta^{j+l}})^k$.

The $\mathbf{R}$-matrix $\mathsf{R}^{K\PP^2}$ in the normalized canonical basis for $K\PP^2$ is given by the following result, see \cite{SS, LP}.

\begin{Prop}\label{RM1} We have 
\begin{align*}
    \left[\mathsf{R}^{K\PP^2}(z)\right]_{ij}=\left[\widetilde{\mathsf{R}}^{K\PP^2}(z)\right]_{ij}\cdot \mathsf{B}^{K\PP^2}(z)\,.
\end{align*}

\end{Prop}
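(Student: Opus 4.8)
The statement to be proven, Proposition \ref{RM1}, asserts that the $\mathbf{R}$-matrix for $K\PP^2$ in the normalized canonical basis factors as the product of the matrix $\widetilde{\mathsf{R}}^{K\PP^2}(z) = \mathbf{\Psi}^{K\PP^2}\cdot\mathsf{Q}(z)$ with the diagonal correction matrix $\mathsf{B}^{K\PP^2}(z)$. This mirrors exactly the structure already established on the orbifold side in Proposition \ref{ORR} and its Corollary, so the plan is to follow that template, substituting the relevant ingredients for $K\PP^2$.

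\textbf{Plan of proof.} First I would recall from \cite{LP} (summarized in the excerpt) that the genus-$0$ quasimap theory of $K\PP^2$ determines a semisimple Frobenius manifold, and that the $\mathbf{R}$-matrix is characterized — via Givental's equivariant reconstruction using virtual localization \cite{GrP} — by a flatness equation analogous to \eqref{FSM}/\eqref{E1} together with normalization at the fixed points. The asymptotic expansions \eqref{ASKP2} of the fundamental solutions $\overline{\mathds{S}}_j(H^i)$ encode precisely the entries of the $\mathbf{R}$-matrix along the one-dimensional slice, with the exponential prefactors $e^{\mu\zeta_j/z}$ carrying the canonical-coordinate data. The key point is that the power-series parts $\widetilde{P}^{k,K\PP^2}_{ij}$ satisfy the differential system \eqref{RP2}, which is the $K\PP^2$ analogue of \eqref{R}; hence the matrix $\mathsf{Q}(z)$, and therefore $\widetilde{\mathsf{R}}^{K\PP^2}(z) = \mathbf{\Psi}^{K\PP^2}\cdot\mathsf{Q}(z)$, is a solution of the flatness equation \eqref{E1} (in the $K\PP^2$ variables) with initial condition $\mathrm{Id}$ at $q=0$.

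\textbf{Main steps, in order.} (1) Verify that $\widetilde{\mathsf{R}}^{K\PP^2}(z)$ satisfies the flatness recursion coming from the $t_1$-directional component of \eqref{FSM}, using \eqref{RP2} together with the explicit form of $\mathbf{\Psi}^{K\PP^2}$ and $\mathsf{D}^{K\PP^2}\mathbf{U}^{K\PP^2}$ (the latter computed from the canonical coordinates of $K\PP^2$, i.e. from $L^{K\PP^2}$ exactly as Lemma \ref{CAF} gives $\mathsf{D}\mathbf{U}$ in terms of $L$ on the orbifold side). (2) Invoke the uniqueness clause of the Givental--Teleman classification: the true $\mathbf{R}$-matrix $\mathsf{R}^{K\PP^2}(z)$ also satisfies this flatness equation, so $\mathsf{R}^{K\PP^2}(z)$ and $\widetilde{\mathsf{R}}^{K\PP^2}(z)$ differ by right multiplication by a matrix that is constant in $q$; the ambiguity is exactly a diagonal matrix of the form $\exp(\sum_{k\ge 1}\mathbf{a}_{2k-1}z^{2k-1})$ pinned down by the symplectic condition \eqref{symmm}. (3) Identify this constant diagonal matrix with $\mathsf{B}^{K\PP^2}(z)$. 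For this I would evaluate both sides at a fixed point — equivalently, read off the asymptotics \eqref{ASKP2} at $q=0$ — and match against the quasimap/orbifold quantum Riemann--Roch computation of \cite[Section 4.2]{Ts} (as used for the orbifold side in Proposition \ref{ORR}), which produces precisely the Bernoulli-number exponential $B^{K\PP^2}_j(z)$ with the stated coefficients $N_{k,j}=(-\tfrac{1}{3\zeta^j})^k+\sum_{l=1}^2(\tfrac{1}{\zeta^j-\zeta^{j+l}})^k$. These $N_{k,j}$ are the weight-combinations entering the localization contributions at $p_j$, so the exponential is the standard $\Gamma$-class / quantum Riemann--Roch factor.

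\textbf{Expected main obstacle.} The routine parts — checking \eqref{RP2} implies the flatness recursion, and invoking uniqueness — are formal once the dictionary with the orbifold side is set up. The delicate step is pinning down the diagonal factor $\mathsf{B}^{K\PP^2}(z)$ exactly, including the precise combinatorial form of the exponents $N_{2k-1,j}$ and the placement of the Bernoulli numbers $B_{2k}(0)$: this requires correctly tracking the equivariant Euler classes $e^{K\PP^2}_j=-9$ of the tangent spaces, the hyperplane-class normalization of $\phi^{K\PP^2}_j$, and the sign/root-of-unity bookkeeping in $\mathbf{\Psi}^{K\PP^2}$ (note the overall factor $-i/3$, whose square reproduces the inner product \eqref{IP2}). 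Getting these normalizations consistent with \cite{LP,Ts} — and in particular ensuring the symplectic condition \eqref{symmm} holds for $\mathsf{R}^{K\PP^2}$ but crucially \emph{not} for $\widetilde{\mathsf{R}}^{K\PP^2}$, exactly as flagged for the orbifold analogue — is where the real care is needed; everything else is a transcription of the argument already carried out in Section \ref{haes} for $[\CC^3/\ZZ_3]$.
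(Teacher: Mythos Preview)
Your plan is sound and mirrors the argument structure of the Corollary to Proposition~\ref{ORR}: both $\mathsf{R}^{K\PP^2}$ and $\widetilde{\mathsf{R}}^{K\PP^2}$ satisfy the flatness recursion \eqref{E1} (in the $K\PP^2$ variables), hence differ by a $q$-constant right factor, and that factor is fixed by evaluating at $q=0$. The paper itself does not supply a proof of Proposition~\ref{RM1}; it simply cites \cite{SS,LP}, and your outline is precisely the argument carried out there.

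One correction of attribution in your step~(3): the diagonal correction $\mathsf{B}^{K\PP^2}(z)$ for $K\PP^2$ is \emph{not} obtained from Tseng's orbifold quantum Riemann--Roch \cite{Ts}. Rather, it comes from Givental's original equivariant reconstruction \cite{SS} via virtual localization on the (non-orbifold) space $K\PP^2$: the Bernoulli-number exponentials arise from the Stirling asymptotics of the $\Gamma$-functions appearing in the Hodge-integral vertex contributions at each $\mathsf{T}$-fixed point $p_j$, and the coefficients $N_{k,j}$ record exactly the tangent and normal weights of $K\PP^2$ at $p_j$ (the term $(-\tfrac{1}{3\zeta^j})^k$ from the fiber direction of the canonical bundle, the sum over $l$ from the $\PP^2$ tangent directions). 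This is the computation in \cite{LP} and is the non-orbifold precursor of what \cite{Ts} does for $[\CC^3/\ZZ_3]$ in Proposition~\ref{ORR}. With that reference swapped in, your argument goes through.
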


\subsection{Proof of Theorem \ref{crc}}
The $\mathbf{R}$-matrix approach to Theorem \ref{crc} will establish
a more general results for potentials with insertions.
Let
\begin{equation*} 
\mathcal{F}^{K\PP^2}_{g,n}(H^{a_1},\ldots,H^{a_n}) = 
  \sum_{d=0}^\infty {q^d} \int_{[\overline{M}_{g,n}(K\proj^2,d)]^{vir}}\prod_{k=1}^n{\text{ev}}_i^*(H^{a_k})
  \,,
 \end{equation*}
 be the Gromov-Witten potential for $K\PP^2$.
Define 
the map 
$$\iota :(H^*_{\mathsf{T},\text{orb}}([\CC^3/\ZZ_3]),\mathbf{g}) \rightarrow (H^*_{\mathsf{T}}(K\PP^2),\mathbf{g}^{K\PP^2})$$
by the rule
\begin{align*}
    \iota(\phi_0)=1\,,\,\,\, \iota(\phi_1)=H\,,\,\,\,\iota(\phi_2)=H^2\,.
\end{align*}
By Theorem $1'$ and \cite[Theorem 1]{LP} respectively, we have 
\begin{eqnarray*}
\mathcal{F}^{[\CC^3/\ZZ_3]}_{g,n}(\phi_{a_1},\ldots,\phi_{a_n}) & \in &
\mathbb{C}[L^{\pm1}][X, C_1^{\pm1}]\, ,
\\
\mathcal{F}^{K\PP^2}_{g,n}(H^{a_1},\ldots,H^{a_n}) 
&\in &\mathbb{C}[(L^{K\PP^2})^{\pm 1}][X^{K\PP^2}, (C_1^{K\PP^2})^{\pm 1}]\, .
\end{eqnarray*}
The following result specializes to Theorem \ref{crc}
in case there are no insertions.

\vspace{6pt}\noindent {\bf Theorem $\mathbf{4'}$.}
{\em For $g$ and $n$ in the stable range, the crepant resolution
 correspondence 
 \begin{align*}
 \mathcal{F}^{[\CC^3/\ZZ_3]}_{g,n}(\phi_{i_1},\ldots,\phi_{i_n})=(-1)^{2g-2+n}\cdot
     \mathsf{P}\left(\mathcal{F}^{K\PP^2}_{g,n}(\iota(\phi_{i_1}),\ldots,\iota(\phi_{i_n}))\right)
     \,
 \end{align*}
    holds with the ring homomorphism
    $$\mathsf{P}:\mathbb{C}[(L^{K\PP^2})^{\pm 1}][X^{K\PP^2}, (C_1^{K\PP^2})^{\pm 1}]\rightarrow
    \mathbb{C}[L^{\pm1}][X, C_1^{\pm1}]$$
    defined by
    $$
\mathsf{P}(L^{K\PP^2})=-\frac{L}{3}\,,\ \
    \mathsf{P}(X^{K\PP^2})=-\frac{X}{3}\,, \ \ 
        \mathsf{P}(C_1^{K\PP^2}) = \frac{1}{3} C_1\, .
$$}
\vspace{6pt}

\begin{proof}
The first step is to prove that the map $\iota$ matches the 
CohFT 
structures in genus 0 (up to sign). Certainly, $\iota$
preserves pairings up to a sign,
\begin{align}\label{SC}
    \mathbf{g}=-\mathbf{g}^{K\PP^2}\,.
\end{align}
Using the result of \cite[Section 5]{LP}, we easily obtain the following genus 0 results for $K\PP^2$:
\begin{align*}
    &\lan 1,1,1\ran_{0,3}^{K\PP^2}=-\frac{1}{3}\, ,   
    &\lan1,H,H^2\ran_{0,3}^{K\PP^2}=-\frac{1}{3}\, ,\\
    & \lan H,H,H\ran_{0,3}^{K\PP^2}=-\frac{1}{3}\frac{(L^{K\PP^2})^3}{(C_1^{K\PP^2})^3}\, ,   & \lan H^2,H^2,H^2\ran_{0,3}^{K\PP^2}=-\frac{1}{3}\frac{(C_1^{K\PP^2})^3}{(L^{K\PP^2})^3}\, .
\end{align*}
For other choices of insertions, the $3$-point functions
in genus 0 vanish.
The genus $0$ invariants for $K\PP^2$ match Lemma \ref{g0C3} via the ring homomorphism $\mathsf{P}$ and the map $\iota$ up to the sign $(-1)^{2g-2+n}$.


The second step is to match the $\mathbf{R}$-matrices of the two CohFTs. 
The two system of equations \eqref{RC3} and \eqref{RP2} are equivalent via the transformation $\mathsf{P}$ defined by \eqref{RH} up to another sign change. The effect
of the 
latter sign change cancels the former 
sign change \eqref{SC} in the higher genus formula. More precisely, the sign changes will contribute the
global factors $(-1)^{1-g}$ and $(-1)^{3g-3+n}$ respectively in the genus $g$, $n$-marked Gromov-Witten potential function.

Therefore, to prove Theorem \ref{crc}, we must only  match the constant terms of the $\mathbf{R}$-matrices.
%
We 
apply the result of \cite{ZaZi} to \eqref{RP2}, to conclude
\begin{align*}
    &\widetilde{P}_{i0}^{k,K\PP^2}\in\CC[L^{K\PP^2}]\,\,\,\text{for}\,\, i=0,2\,, \\
    &\widetilde{P}_{10}^{k,K\PP^2}\in\CC[(L^{K\PP^2})^{\pm 1},X^{K\PP^2}]\,.
\end{align*}
Denote by $a_{ik}$ the constant term in the Laurent series of $\widetilde{P}_{i0}^{k,K\PP^2}$ in $L^{K\PP^2}$. From \eqref{RP2}, we can prove $a_{ik}$ is independent of $X^{K\PP^2}$. Therefore, we have $$a_{ik}\in\QQ\, 
\  \text{for}\ i=0,1,2 \, .$$  
The last step in the proof of Theorem \ref{crc} is
the following Lemma proven in the Appendix by T. Coates
and H. Iritani.

\begin{Lemma}\label{CI} The equality
of power series in $z$,
 \begin{multline*}
    {\rm Exp}\left(-\sum_{k=1}^\infty \frac{N_{2k-1,0}}{2k-1}\frac{B_{2k}(0)}{2k} z^{2k-1}\right)\sum_{k=0}^\infty a_{\mathsf{Inv}(i)k} z^k =\\ {\rm Exp}\left(3\sum_{k=1}^\infty(-1)^{k+1} \frac{B_{3k+1}(i/3)}{3k+1}\frac{z^{3k}}{3k}\right)\,,
\end{multline*}
holds for $i=0,1,2$.
\end{Lemma}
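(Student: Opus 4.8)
The plan is to identify both sides of the claimed identity with the asymptotic expansion, at the orbifold point, of one fixed object: the (quasimap) $\mathds S$-matrix of $K\PP^2$ restricted to a torus-fixed point and analytically continued from the large-volume region $q=0$ to a neighborhood of the orbifold point $q=\infty$. By the definitions of $\widetilde P^{k,K\PP^2}_{i0}$ through \eqref{ASKP2} and of $\mathsf R^{K\PP^2}$ through Proposition \ref{RM1}, the left-hand side $\mathsf B^{K\PP^2}_0(z)\sum_k a_{\mathsf{Inv}(i)k}z^k$ is precisely the value in the orbifold limit $L^{K\PP^2}=0$ (equivalently $q=\infty$, equivalently $\psit=0$ under the change of variables $\mathsf P$) of the corrected $K\PP^2$ $\mathbf R$-matrix entry, while the right-hand side is the orbifold quantum Riemann--Roch factor $\mathsf B^{[\CC^3/\ZZ_3]}_i(z)$ of Proposition \ref{ORR}. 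Since the bare orbifold $\mathbf R$-matrix $\widetilde{\mathsf R}^{[\CC^3/\ZZ_3]}$ equals the identity at $\psit=0$ by \eqref{Ic}, the lemma is exactly the statement that the analytic continuation of the $K\PP^2$ $\mathbf R$-matrix to the orbifold point agrees with the $[\CC^3/\ZZ_3]$ $\mathbf R$-matrix there.

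To carry this out concretely, I would start from the hypergeometric small $I$-function of $K\PP^2$, with $d$-th term $q^d\prod_{m=1}^{3d}(-3H-mz)/\prod_{m=1}^d(H+mz)^3$, and restrict it to the fixed point $p_i$, where $H\mapsto\lambda_i=\zeta^i$. Writing the $d$-term as a ratio of Gamma functions and applying the Gauss triplication formula $\Gamma(3w)=(2\pi)^{-1}3^{3w-1/2}\Gamma(w)\Gamma(w+\tfrac13)\Gamma(w+\tfrac23)$ introduces the shifts by $\tfrac13$ and $\tfrac23$ that will become the fractional Bernoulli arguments. Representing the resulting $q$-series by a Mellin--Barnes integral and closing the contour on the other side expresses its analytic continuation to $q=\infty$ as a sum of residues, an expansion in the orbifold coordinate $\psit$ (a cube root of $1/q$ to leading order); the coefficient of $\psit^0$, up to the explicit prefactors in \eqref{ASKP2}, is the generating series $\sum_k a_{\mathsf{Inv}(i)k}z^k$, and it is a ratio of products of Gamma functions of the form $\prod\Gamma(\text{affine in }z)$. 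I would then feed each factor through Stirling's expansion
$$\log\Gamma(x)\ \sim\ \Big(x-\tfrac12\Big)\log x-x+\tfrac12\log 2\pi+\sum_{k\ge1}\frac{B_{2k}(0)}{2k(2k-1)}\,x^{1-2k}\,.$$
The polynomial-in-$\log$ pieces cancel, and the Stirling tails split in two: the tails attached to the integer shifts in $\prod_{m=1}^d(H+mz)^3$ assemble (the relevant weight sums are exactly the $N_{2k-1,0}$) into $\mathsf B^{K\PP^2}_0(z)^{-1}$, while the tails attached to the fractional shifts $\tfrac13,\tfrac23$ produced by the triplication assemble, after using $\zeta(1-n,x)=-B_n(x)/n$ to pass from Hurwitz-zeta tails to $B_{2k}(i/3)$ and after the reindexing forced by the ``$3k+1$'' pattern and the overall factor $3$, into $\mathsf B^{[\CC^3/\ZZ_3]}_i(z)$. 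Equating the two computations of this one $\psit^0$-coefficient gives the lemma. Conceptually this is the specialization to fixed-point/twisted-sector components of the fact that the $\widehat{\Gamma}$-integral structures of $K\PP^2$ and $[\CC^3/\ZZ_3]$ correspond under the crepant transformation.

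The involution $\mathsf{Inv}$ is pure bookkeeping: the labelling of the fixed points $p_i$ of $K\PP^2$ differs from the labelling of the twisted sectors $\omega^i$ of $[\CC^3/\ZZ_3]$ by complex conjugation $\zeta^i\leftrightarrow\zeta^{-i}$, i.e. the interchange $\tfrac13\leftrightarrow\tfrac23$ of the shifts in the triplication formula --- this is the shift $\{0,1,2\}\to\{0,2,1\}$ already flagged in the footnote comparing $\widetilde P^{k,K\PP^2}_{i0}$ with $R_{ik}$ of \cite{LP}. One also uses, as in the main text, that $a_{\mathsf{Inv}(i)k}\in\QQ$ is independent of $X^{K\PP^2}$; in the present computation this is automatic, since the $\psit^0$-coefficient is a pure Gamma-function expression in $z$ alone with no room for $X^{K\PP^2}$.

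The main obstacle is controlling the analytic continuation rigorously: choosing the Mellin--Barnes contour so the series genuinely converges and continues, tracking which residues feed the $\psit^0$ coefficient, and bookkeeping the $\log q$ and $\log z$ terms so they cancel and leave a clean equality of formal power series in $z$ with no leftover exponential-of-logarithm. A secondary but unavoidable difficulty is matching normalizations --- the factors of $3$, the $z\mapsto z/\zeta^j$ rescalings in \eqref{ASKP2}, the $-i/3$ in $\mathbf{\Psi}^{K\PP^2}$, and the overall $e^{\mu\zeta_j/z}$ --- so that the Gamma-ratio really does factor as $\mathsf B^{K\PP^2}_0(z)^{-1}$ times $\mathsf B^{[\CC^3/\ZZ_3]}_i(z)$ with nothing left over.
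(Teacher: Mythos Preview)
Your strategy is sound and would succeed, but the paper's Appendix takes a different and cleaner route. Rather than manipulating the hypergeometric $I$-series directly via Gamma-ratio rewriting, the triplication formula, and a Mellin--Barnes continuation, Coates and Iritani pass to Givental's equivariant Landau--Ginzburg mirror: the oscillatory integral $\int_\Gamma e^{F_\lambda/z}g(w)\,\omega$ on $Y_q=\{w_0w_1w_2=qw_3^3\}$ with $F_\lambda=\sum_{i=0}^3 w_i+\sum_{i=0}^2\lambda_i\log w_i$. By \cite[Proposition~6.9]{CCIT2} the saddle-point asymptotics of this integral reproduce the $I$-function data, giving the left-hand side of the lemma (their Corollary~\ref{Asm}). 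The payoff comes at $q=\infty$: there the $w_3$ term drops out and the integral \emph{factorizes} into three independent one-variable integrals $\int e^{(w_i+\lambda_i\log w_i)/z}\,dw_i/w_i=\Gamma(\lambda_i/z)(-z)^{\lambda_i/z}$, with the insertion $w_3^j$ contributing an extra $(w_0w_1w_2)^{j/3}$ that simply shifts each argument by $j/3$. Stirling on each factor, together with $\sum_{i=0}^2\lambda_i^{-k}=3\,\delta_{3\mid k}$, then produces the right-hand side in one line. Your route would recover the same Gamma products, but only after the triplication identity and a contour-deformation argument; the mirror oscillatory integral gets the three-fold factorization and the fractional shifts for free from the geometry of $Y_q$, so the bookkeeping obstacles you correctly flag (contour control, log-term cancellation, normalization matching) largely evaporate.
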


The left side of equality of Lemma \ref{CI} for $i=0$ 
computes the constant term 
with respect to $L^{K\PP^2}$ 
of the coefficients  of the first row 
of the $\mathbf{\Psi}^{-1}\mathbf{R}$-matrix of $K\PP^2$
by Proposition \ref{RM1}.
For $i=1$ and $i=2$, the left side of Lemma \ref{CI}
computes the constant terms in $L^{K\PP^2}$  of the coefficients
of the second row and the third rows (after multiplication{\footnote{Both
$\frac{L^{K\PP^2}}{C_1^{K\PP^2}}$ and $\frac{C_1^{K\PP^2}}{L^{K\PP^2}}$ 
have constant term in $q$ equal to 1.}}
by $\frac{L^{K\PP^2}}{C_1^{K\PP^2}}$ and $\frac{C_1^{K\PP^2}}{L^{K\PP^2}}$ 
respectively). 

Similarly, the right side of Lemma \ref{CI} for $i=0$ 
computes the constant term  with respect to $L$ of the coefficients of
the first row 
of the $\mathbf{\Psi}^{-1}\mathbf{R}$-matrix of 
$[\CC^3/\ZZ_3]$ by Proposition \ref{ORR}. 
For $i=1$ and $i=2$, the right side of Lemma \ref{CI}
computes the constant terms in $L$  of the coefficients
of the second row and the third rows (after multiplication{\footnote{Both
$-\frac{L}{C_1}$ and $-\frac{C_1}{L}$ 
have constant term in $\psit$ equal to 1.}}
by $-\frac{L}{C_1}$ and -$\frac{C_1}{L}$ 
respectively). 

Since the
constant terms match by Lemma \ref{CI}, the $\mathbf{R}$-matrix (or, equivalently,
the $\mathbf{\Psi}^{-1}\mathbf{R}$-matrix) of $[\CC^3/\ZZ_3]$ exactly equals the $\mathbf{R}$-matrix (or, equivalently, the $\mathbf{\Psi}^{-1}\mathbf{R}$-matrix) of $K\PP^2$ via the transformation $\mathsf{P}$. 
\end{proof}

\section{Calculations in low genus}\label{EC}
 We present here the
 formula for the potential function in  genus 2 and 3
 for  $[\CC^3/\ZZ_3]$  and $K\PP^2$ obtained via the $\mathbf{R}$-matrix 
 method of Section \ref{haes}. In genus 2, we have
  \begin{eqnarray*}
     \mathcal{F}^{[\CC^3/\ZZ^3]}_2&=& \frac{-291600 - 25893 L^3 - 784 L^6 - 8 L^9}{ 466560 L^3}\\ & & + \left(\frac{1}{9} + \frac{15}{8 L^3} + \frac{13 L^3}{7776}\right) X \\
     & &+ \left(-\frac{1}{18} - \frac{15}{
    8 L^3}\right) X^2 + \frac{5 X^3}{8 L^3}\,,
\end{eqnarray*}
 \begin{eqnarray*}
     \mathcal{F}^{K\PP^2}_2&=&\frac{400-959\tilde{L}^3+784\tilde{L}^6-216\tilde{L}^9}{17280\tilde{L}^3}\\
     &&+\left(-\frac{1}{3}+\frac{5}{24\tilde{L}^3}+\frac{13\tilde{L}^3}{96}\right)\tilde{X}\\
     & & +\left(-\frac{1}{2}+\frac{5}{8\tilde{L}^3}\right)\tilde{X}^2+\frac{5}{8\tilde{L}^3}\tilde{X}^3.
     \end{eqnarray*}
To simplify the formulas, we have used the notation
$\tilde{L}=L^{K\PP^2}$ and  $\tilde{X}=X^{K\PP^2}$ for $K\PP^2$.

\pagebreak

The formula for $\mathcal{F}^{[\CC^3/\ZZ^3]}_3$  is
much more complicated:
{\footnotesize
{\begin{align*}
     &\frac{26784626400 + 7043364720 L^3 + 767774781 L^6 + 44032896 L^9 + 
  1398288 L^{12} + 23328 L^{15} + 160 L^{18}}{
 9523422720 L^6}\\ \\
     &+ \frac{(-318864600 - 66331710 L^3 - 5521446 L^6 - 
    228393 L^9 - 4681 L^{12} - 38 L^{15}) X}{
 18895680 L^6} \\ \\
     &+ \frac{(531441000 + 83980800 L^3 + 4996566 L^6 + 
    132147 L^9 + 1307 L^{12}) X^2}{
 12597120 L^6} \\ \\
     &+ \frac{(-47239200 - 5318784 L^3 - 200772 L^6 - 
    2539 L^9) X^3}{
 839808 L^6} \\ \\
     &+ \left(\frac{35}{648} + \frac{675}{16 L^6} + \frac{289}{96 L^3}
     \right) X^4 
     - \frac{
 5 (324 + 11 L^3) X^5}{96 L^6} + \frac{45 X^6}{16 L^6}.
 \end{align*}}}
For $\mathcal{F}^{K\PP^2}_3$,  we
have:
{\footnotesize{
 \begin{align*}
&     \frac{16800 - 119280 \tilde{L}^3 + 351063 \tilde{L}^6 - 543616 \tilde{L}^9 + 466096 \tilde{L}^{12} - 
  209952 \tilde{L}^{15} + 38880 \tilde{L}^{18}}{
 4354560 \tilde{L}^6}\\ \\
     &+ \frac{(600 - 3370 \tilde{L}^3 + 7574 \tilde{L}^6 - 8459 \tilde{L}^9 + 4681 \tilde{L}^{12} - 
    1026 \tilde{L}^{15}) \tilde{X}}{
 8640 \tilde{L}^6}\\ \\ 
     &+ \frac{(3000 - 12800 \tilde{L}^3 + 20562 \tilde{L}^6 - 14683 \tilde{L}^9 + 
    3921 \tilde{L}^{12}) \tilde{X}^2}{
 5760 \tilde{L}^6} \\ \\
     &+ \frac{(2400 - 7296 \tilde{L}^3 + 7436 \tilde{L}^6 - 2539 \tilde{L}^9) \tilde{X}^3}{
 1152 \tilde{L}^6}\\ \\
     &+ \left(\frac{35}{8} + \frac{75}{16 \tilde{L}^6} - \frac{289}{32 \tilde{L}^3}\right) \tilde{X}^4 - \frac{
 15 (-12 + 11 \tilde{L}^3) \tilde{X}^5}{32 \tilde{L}^6} + \frac{45 \tilde{X}^6}{16 \tilde{L}^6}\, .
 \end{align*}}}
 
\noindent As stated in Theorem \ref{crc}, the above potentials match after
the ring homomorphism $\mathsf{P}$,
$$
\mathsf{P}(L^{K\PP^2})=-\frac{L}{3}\,,\ \ \
    \mathsf{P}(X^{K\PP^2})=-\frac{X}{3}\,.
$$

In \cite{BoCa}, the constant terms of $\mathcal{F}_2^{[\CC^3/\ZZ^3]}$
and $\mathcal{F}_3^{[\CC^3/\ZZ^3]}$ were computed directly
by studying the geometry of the moduli space of curves. Our calculations
agree with their results.

\pagebreak
\pagestyle{plain}

\appendix

\section{The $\mathbf{R}$-matrix identity}
\vspace{8pt}
\begin{center}
    {\em by Tom Coates and Hiroshi Iritani
    {\let\thefootnote\relax\footnote{{H.I. thanks Atsushi Kanazawa for inviting Hyenho Lho to Kyoto and for 
providing an occasion to discuss the identity in the Appendix and 
the Crepant Resolution Conjecture.}}}
    }
\end{center}
\vspace{8pt}

\subsection{Overview}
We will prove Lemma \ref{CI} by analyzing the oscillatory integrals occurring in Givental's equivariant mirror \cite{Gequiv}. We briefly recall the so-called {\em saddle point method} for finding their asymptotic behaviour, see \cite[Section 6.2]{CCIT2}. Let $f(t),\,g(t)$ be holomorphic functions on $\CC^n$ and consider the oscillatory integral
$$\int_{\Gamma} e^{f(t)/z}g(t)dt^1\dots dt^n\, ,$$
where the real $n$-dimensional cycle $\Gamma\subset\CC^n$ is chosen so that the integral converges. Let $t_0$ be a non-degenerate critical point of $f$ and choose $\Gamma$ to be the stable manifold of the Morse function 
$$t\rightarrow \mathscr{R}(f(t))$$ associated with $t_0$ 
(the union of downward gradient trajectories converging to $t_0$). Here we assume $z<0$ and study the asymptotic behaviour of the integral as $z$ approaches zero from the negative real axis. The asymptotic behaviour as $z\rightarrow 0$ is determined only by the integrand around the critical point $t_0$. We expand the integrand $e^{f(t)/z}g(t)$ in Taylor series at $t_0$ and perform termwise integration with respect to the Gaussian measure
$$e^{\frac{1}{2z}\sum_{i,j}h_{i,j}(t^i-t^i_0)(t^j-t^j_0)}dt^1\dots dt^n$$
where $h_{i,j}=\partial_i\partial_jf(t_0)$ is the Hessian matrix and $\partial=\frac{\partial}{\partial t^i}$. We then
obtain
$$\int_{\Gamma}e^{f(t)/z}g(t)dt^1\dots dt^n \sim (-2\pi z)^{n/2}e^{f(t_0)/z}\sum_{k=0}^\infty c_k z^k\,\,\,\text{as}\,\,z\rightarrow 0$$
with
\begin{align}\label{AAsym}
    \sum_{k=0}^\infty c_kz^k=\frac{1}{\sqrt{\text{det}(h_{i,j})}}\left[e^{-\frac{z}{2}\sum_{i,j}h^{i,j}\partial_i\partial_j}e^{f_{\ge 3}/z}g(t)\right]_{t=t_0}
\end{align}
where $f_{\ge 3}(t)=f(t)-f(t_0)-\frac{1}{2}\sum_{i,j}h_{i,j}(t-t^i_0)(t^j-t^j_0)$ and $(h^{i,j})$ are the coefficients of the matrix inverse to $(h_{i,j})$.

\begin{Def}
 For a non-degenerate critical point $t_0$ of $f(t)$, we define the {\em formal asymptotic expansion}
 $$ \text{\em Asym}_{t_0}(e^{f(t)/z}g(t)dt)\in\CC[[z]] $$
to be the right-hand side of \eqref{AAsym}. Since the definition only involves the Taylor expansion at $t_0$, this is well-defined for germs $f(t),\,g(t)$ at $t_0$.
\end{Def}

\subsection{Givental's equivariant mirror for $K\PP^2$.}
The equivariant mirror for local $\PP^2$ was introduced by Givental, which is given by the (multivalued) Landau-Ginzburg potential
$$F=w_0+w_1+w_2+w_3+\sum_{i=0}^2 \lambda_i \text{log} w_i$$
defined on the family of affine varieties
$$Y_q=\{(w_0,w_1,w_2,w_3)\in\CC^4 : w_0w_1w_2=qw_3^3\}\,.$$

The associated oscillatory integral is of the form
\begin{align}\label{OI}
    \mathcal{I}=\int_{\Gamma\subset Y_q} e^{F/z}g(w) \omega
\end{align}
where $\omega$ is the (meromorphic) volume form on $Y_q$:
$$\omega=\frac{d \, \text{log} \,w_0 \wedge d\, \text{log}\,w_1 \wedge d\,\text{log} \,w_2 \wedge d \,\text{log}\,w_3}{d\, \text{log}\, q}\,.$$

Using the coordinate system $(w_0,w_1,w_2)$ on $Y_q$, we have

\begin{align*}
    \mathcal{I}=\int_{\Gamma\subset (\CC^*)^3}e^{(w_0+w_1+w_2+q^{-1/3}(w_0w_1w_2)^{1/3}+\sum_{i=0}^2\lambda_i \text{log}\,w_i)/z}g(w)\frac{1}{3}\frac{dw_0dw_1dw_2}{w_0w_1w_2}\,.
\end{align*}

\subsection{Formal asymptotic expansion.}
The proof of Lemma \ref{CI} is based on the computation of the formal asymptotic expansion of the integral $\mathcal{I}$ for $g(w)=1,w_3,w_3^2$.

We will use the specialization
$$\lambda_i=\zeta^i\,,$$
where $\zeta$ is the primitive third root of unity. With this specialization, the critical points of $F_{\lambda}$ are easy to calculate:
\begin{align}\label{cp}
    w_i=\left\{\begin{array}{rl}  
     L^{K\PP^2}-\zeta^i & \text{for}\,\,0\le i\le 2\, ,  \\
     -3L^{K\PP^2}       & \text{for}\,\,i=3\, ,   \end{array}\right.
\end{align}
where $L^{K\PP^2}=(1+27q)^{-\frac{1}{3}}$ as before. The three choices for the branch of $L^{K\PP^2}$ give rise to three critical points. For the sake of clarity, let us assume $q>0$ and choose the critical point corresponding to a real positive $L^{K\PP^2}$ in the following discussion. The critical value is given by
\begin{align*}
    F_\lambda(\text{cr})=\sum_{i=0}^2 \zeta^i\text{log}\,(L^{K\PP^2}-\zeta^i)
\end{align*}
where cr means the critical point \eqref{cp}. It can be decomposed as
\begin{align*}
    F_\lambda(\text{cr})=\text{log}\,(-9q)+\zeta\text{log}\,(1-\zeta)+\zeta^2\text{log}\,(1-\zeta^2)+\mu
\end{align*}
with $$\mu = \int_0^q (L^{K\PP^2}-1)\frac{dq}{q}\in q \CC[[q]]\,.$$The Hessian of $F_\lambda$ at the critical point with respect to logarithmic coordinates $(\text{log}\,w_0,\text{log}\,w_1,\text{log}\,w_2)$ is given by

$$\text{det}\left(\frac{\partial^2 F_\lambda(\text{cr})}{\partial \text{log}\,w_i\partial\text{log}\,w_j}\right)_{0\le i,j\le2}=-1\,.$$

The $I$-function of $K\PP^2$ was defined in \cite{LP} to be $H^*_{\mathsf{T}}(\PP^2)$-valued power series:
\begin{align*}
    I^{K\PP^2}(q,z)=\sum^{\infty}_{d=0}q^d\frac{\prod_{k=0}^{3d-1}(-3H-kz)}{\prod_{i=0}^2\prod_{k=1}^d(H-\lambda_i+kz)}\,.
\end{align*}

In \cite[Proposition 6.9]{CCIT2}, a relationship between the formal asymptotic expansion of the mirror oscillatory integral \eqref{OI} and the equivariant $I$-function was established for toric Deligne-Mumford stacks. Applying the result to $K\PP^2$, we obtain:
\begin{Prop}\label{AI}
We have
\begin{align*}
    e^{\mu/z} \text{\em Asym}_{\text{\em cr}}(e^{\frac{F_\lambda}{z}}\omega)=&I(q,z)|_{p_0}\cdot\frac{1}{3\sqrt{-1}}\text{\em Exp}\left(-\sum_{k=1}^\infty \frac{B_{k+1}(0)}{k(k+1)}N_{k,0} z^k\right)\,,\\
    e^{\mu/z} \text{\em Asym}_{\text{\em cr}}(e^{\frac{F_\lambda}{z}}w_3\omega)=&(-3z\mathsf{D}^{K\PP^2}-3H)I(q,z)|_{p_0}\\
    &\cdot\frac{1}{3\sqrt{-1}}\text{\em Exp}\left(-\sum_{k=1}^\infty \frac{B_{k+1}(0)}{k(k+1)}N_{k,0} z^k\right)\,,\\
    e^{\mu/z} \text{\em Asym}_{\text{\em cr}}(e^{\frac{F_\lambda}{z}}w_3^2\omega)=&(3z\mathsf{D}^{K\PP^2}+3H+z)(3z\mathsf{D}^{K\PP^2}+3H)I(q,z)|_{p_0}\\
    &\cdot\frac{1}{3\sqrt{-1}}\text{\em Exp}\left(-\sum_{k=1}^\infty \frac{B_{k+1}(0)}{k(k+1)}N_{k,0} z^k\right)\,, 
\end{align*}
where the $I$-function in the right-hand side should be expanded in Laurent series at $z=0$.
\end{Prop}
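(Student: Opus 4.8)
The plan is to invoke \cite[Proposition 6.9]{CCIT2}, which identifies the formal asymptotic expansion of the mirror oscillatory integral of a toric Deligne--Mumford stack at a torus-fixed critical point with the corresponding component of its equivariant $I$-function, twisted by an explicit ``classifying'' factor built from Bernoulli polynomials, and then to deduce the $w_3$ and $w_3^2$ cases from the $g=1$ case by differentiating in $q$. First I would record the mirror data already assembled above for $K\PP^2$: the three critical points of $F_\lambda$ on $Y_q$ indexed by the branches of $L^{K\PP^2}=(1+27q)^{-1/3}$ (selecting the branch with $L^{K\PP^2}>0$ to define the critical point $\mathrm{cr}$ corresponding to the fixed point $p_0$), the Hessian determinant $-1$ in logarithmic coordinates, and the decomposition $F_\lambda(\mathrm{cr})=\log(-9q)+\zeta\log(1-\zeta)+\zeta^2\log(1-\zeta^2)+\mu$, together with the identities $\mathsf{D}^{K\PP^2}\mu=L^{K\PP^2}-1$ and hence $\mathsf{D}^{K\PP^2}F_\lambda(\mathrm{cr})=L^{K\PP^2}$.

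Step 1, the case $g=1$: apply \cite[Proposition 6.9]{CCIT2} to $K\PP^2$. Dividing the asymptotic relation $\mathcal{I}\sim(-2\pi z)^{3/2}e^{F_\lambda(\mathrm{cr})/z}\,\mathrm{Asym}_{\mathrm{cr}}(e^{F_\lambda/z}\omega)$ by the universal prefactor and pulling the factor $e^{\mu/z}$ out of $e^{F_\lambda(\mathrm{cr})/z}$, the cited proposition yields the first stated identity. The two things to check here are that the constant $\tfrac1{3\sqrt{-1}}$ is exactly the $\tfrac13$ from the normalization of $\omega$ as $\tfrac13\,d\log w_0\wedge d\log w_1\wedge d\log w_2$ times the factor $1/\sqrt{\det(\mathrm{Hessian})}=1/\sqrt{-1}$, and that the Bernoulli-type factor appearing in \cite{CCIT2}, which is governed by the torus weights of the tangent directions at $p_0$, specializes under $\lambda_i=\zeta^i$ to $\mathrm{Exp}\!\big(-\sum_{k\geq1}\tfrac{B_{k+1}(0)}{k(k+1)}N_{k,0}z^k\big)$ with $N_{k,0}$ as defined in the text.

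Step 2, the cases $g=w_3,w_3^2$: in the chart $(w_0,w_1,w_2)$ one has $w_3=q^{-1/3}(w_0w_1w_2)^{1/3}$, so $q\partial_q F_\lambda=-\tfrac13 w_3$, $q\partial_q(w_3^{\,j})=-\tfrac j3 w_3^{\,j}$ and $q\partial_q\omega=0$; differentiating $\mathcal{I}_j=\int_\Gamma e^{F_\lambda/z}w_3^{\,j}\omega$ then gives $\mathcal{I}_{j+1}=-3z\big(\mathsf{D}^{K\PP^2}+\tfrac j3\big)\mathcal{I}_j$. Translating this into the asymptotic expansions — using $\mathsf{D}^{K\PP^2}F_\lambda(\mathrm{cr})=L^{K\PP^2}$ to differentiate the prefactor, $\mathsf{D}^{K\PP^2}\mu=L^{K\PP^2}-1$ to commute $\mathsf{D}^{K\PP^2}$ past $e^{\mu/z}$, and the $q$-independence of $N_{k,0}$ so that $\mathsf{D}^{K\PP^2}$ annihilates the classifying factor — one obtains the recursion $e^{\mu/z}\mathrm{Asym}_{\mathrm{cr}}(e^{F_\lambda/z}w_3^{\,j+1}\omega)=(-3-jz-3z\mathsf{D}^{K\PP^2})\big(e^{\mu/z}\mathrm{Asym}_{\mathrm{cr}}(e^{F_\lambda/z}w_3^{\,j}\omega)\big)$. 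Iterating this from Step 1 with $j=0$ and then $j=1$, and using that restriction to $p_0$ is a ring homomorphism sending $H\mapsto\lambda_0=1$ and commuting with $\mathsf{D}^{K\PP^2}$, so that the scalar operators $-3-3z\mathsf{D}^{K\PP^2}$ and $z-3-3z\mathsf{D}^{K\PP^2}$ acting on $I^{K\PP^2}|_{p_0}$ come from $-3H-3z\mathsf{D}^{K\PP^2}$ and $z-3H-3z\mathsf{D}^{K\PP^2}$ acting on the $H^*_{\mathsf{T}}(\PP^2)$-valued $I^{K\PP^2}$ before restriction (consistent with the numerator shifts $\prod_{k=0}^{3d-1}(-3H-kz)\mapsto\prod_{k=0}^{3d}(-3H-kz)\mapsto\prod_{k=0}^{3d+1}(-3H-kz)$), yields the second and third identities.

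The main obstacle is making Step 2 rigorous: $\mathrm{Asym}_{\mathrm{cr}}$ is built from the Taylor expansion at a critical point that moves with $q$, so one must justify that $q\partial_q$ commutes with $\mathrm{Asym}_{\mathrm{cr}}$, i.e. that the Lefschetz thimble $\Gamma$ can be chosen to vary smoothly in $q$ with no boundary contributions, and one must check that the precise hypotheses of \cite[Proposition 6.9]{CCIT2} (the conventions for the non-proper target, the choice of thimble, and convergence of the integral) genuinely apply to $K\PP^2$ after the equivariant specialization $\lambda_i=\zeta^i$. Once this is settled, the remaining steps are routine bookkeeping of the $e^{\mu/z}$ conjugation, the constant $\tfrac1{3\sqrt{-1}}$, and the passage between the scalar operator on $I^{K\PP^2}|_{p_0}$ and the cohomology-valued operator $-3H-3z\mathsf{D}^{K\PP^2}$ on $I^{K\PP^2}$.
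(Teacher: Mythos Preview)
Your proposal is correct and aligns with the paper's approach: the paper does not give a detailed proof but simply invokes \cite[Proposition~6.9]{CCIT2} for all three identities at once, whereas you apply that result for $g=1$ and then derive the $w_3$ and $w_3^{\,2}$ cases by the $q$-differentiation recursion $\mathcal{I}_{j+1}=-(3z\mathsf{D}^{K\PP^2}+jz)\mathcal{I}_j$. Your recursion, the commutation of $\mathsf{D}^{K\PP^2}$ past $e^{\mu/z}$, and the identification $H|_{p_0}=1$ are all checked correctly and produce exactly the stated operators $(-3H-3z\mathsf{D}^{K\PP^2})$ and $(3H+z+3z\mathsf{D}^{K\PP^2})(3H+3z\mathsf{D}^{K\PP^2})$; the concern you flag about differentiating through the moving critical point is handled by the purely algebraic definition of $\mathrm{Asym}_{\mathrm{cr}}$ in \eqref{AAsym} (the extra terms from the motion of $\mathrm{cr}$ vanish by the critical-point equations), so it is not a genuine obstacle.
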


\noindent Here $\mathsf{D}^{K\PP^2}=q\frac{d}{dq}$. The definition of $\widetilde{P}_{i0}^{k,K\PP^2}$ immediately yields: 
\begin{Cor}\label{Asm}
We have
\begin{align*}
    3\sqrt{-1} \text{\em Asym}_{\text{\em cr}}(e^{\frac{F_\lambda}{z}}\omega)=\left(\sum_{k=0}^\infty \widetilde{P}_{00}^{k,K\PP^2} z^k\right) \text{\em Exp}\left(-\sum_{k=1}^\infty\frac{B_{k+1}(0)}{k(k+1)}N_{k,0} z^k\right)\,,\\
    \frac{-\sqrt{-1}}{L} \text{\em Asym}_{\text{\em cr}}(e^{\frac{F_\lambda}{z}}w_3\omega)=\left(\sum_{k=0}^\infty \widetilde{P}_{20}^{k,K\PP^2} z^k\right) \text{\em Exp}\left(-\sum_{k=1}^\infty\frac{B_{k+1}(0)}{k(k+1)}N_{k,0} z^k\right)\,,\\
    \frac{\sqrt{-1}}{3L^2} \text{\em Asym}_{\text{\em cr}}(e^{\frac{F_\lambda}{z}}w_3^2\omega)=\left(\sum_{k=0}^\infty \widetilde{P}_{10}^{k,K\PP^2} z^k\right) \text{\em Exp}\left(-\sum_{k=1}^\infty\frac{B_{k+1}(0)}{k(k+1)}N_{k,0} z^k\right)\,.
\end{align*}
\end{Cor}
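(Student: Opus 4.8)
\noindent\emph{Proof proposal.} The plan is to obtain the three identities of Corollary \ref{Asm} directly from Proposition \ref{AI}, using only the definition of the series $\widetilde{P}_{i0}^{k,K\PP^2}$ via the asymptotic forms \eqref{ASKP2} and the structure of the stable quotient theory of $K\PP^2$ recorded in \cite[Section 3]{LP}. The starting point is that in this theory there is no mirror map, so the restriction $I(q,z)|_{p_0}$ of the $I$-function to the fixed point $p_0$ is identified with the fundamental solution series $\overline{\mathds{S}}_0(1)$; the first line of \eqref{ASKP2} then says precisely that the expansion of $I(q,z)|_{p_0}$ as $z\to 0$ equals $e^{\mu/z}\sum_{k\ge 0}\widetilde{P}_{00}^{k,K\PP^2}z^k$. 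First I would substitute this into the first equality of Proposition \ref{AI}: multiplying through by $3\sqrt{-1}$ cancels the constant $\tfrac{1}{3\sqrt{-1}}$ together with the factor $e^{\mu/z}$, and the first identity of the Corollary falls out at once.

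For the two remaining identities I would first rewrite the differential operators $(-3z\mathsf{D}^{K\PP^2}-3H)$ and $(3z\mathsf{D}^{K\PP^2}+3H+z)(3z\mathsf{D}^{K\PP^2}+3H)$ of Proposition \ref{AI}, applied to $I(q,z)|_{p_0}$, in terms of $\overline{\mathds{S}}_0(H)$ and $\overline{\mathds{S}}_0(H^2)$. This is the $K\PP^2$ analogue of the Birkhoff-type relations \eqref{BF}: the series attached to $1$, $H$, $H^2$ pass into one another by applying $z\mathsf{D}^{K\PP^2}$ (shifted by multiplication by $H$, which restricts to $\lambda_0=1$ at $p_0$ after the specialization \eqref{specc}) and dividing by $C_1^{K\PP^2}$ and $C_2^{K\PP^2}$ — the precise relations being exactly those encoded in the recursion \eqref{RP2} satisfied by the $\widetilde{P}_{i0}^{k,K\PP^2}$. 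Matching against the second and third lines of \eqref{ASKP2}, with their prefactors $\tfrac{L^{K\PP^2}}{C_1^{K\PP^2}}$ and $\tfrac{(L^{K\PP^2})^2}{C_1^{K\PP^2}C_2^{K\PP^2}}$, one then verifies that the constants $\tfrac{-\sqrt{-1}}{L}$ and $\tfrac{\sqrt{-1}}{3L^2}$ appearing on the left-hand sides of the Corollary are exactly what is needed to clear $\tfrac{1}{3\sqrt{-1}}$, the numerical coefficients of the operators, and the ratios $C_i^{K\PP^2}/L^{K\PP^2}$ — using $(C_1^{K\PP^2})^2C_2^{K\PP^2}=-(L^{K\PP^2})^3$, the $K\PP^2$ counterpart of $C_1^2C_2=-L^3$ from \cite{ZaZi}, for the last step of the bookkeeping — while the exponentials $e^{\mu/z}$ again cancel.

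Everything past this point is a formal manipulation of power series in $z$, so I do not expect a genuine obstacle; the one place where I would take care — and which I expect to be the main difficulty — is tracking the factor $e^{\mu/z}$ arising from the decomposition $F_\lambda(\mathrm{cr})=\log(-9q)+\zeta\log(1-\zeta)+\zeta^2\log(1-\zeta^2)+\mu$ of the critical value, together with the commutation of $z\mathsf{D}^{K\PP^2}$ with restriction to $p_0$ and with multiplication by $H$, so that the operators of Proposition \ref{AI} line up term by term with \eqref{RP2}. Once these normalizations are aligned, the three identities of Corollary \ref{Asm} follow.
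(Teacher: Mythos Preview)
Your strategy is exactly the paper's for the first two identities: the equality $\overline{\mathds{S}}_0(1)=I^{K\PP^2}|_{p_0}$ together with \eqref{ASKP2} gives the first line, and the Birkhoff relation $C_1^{K\PP^2}\overline{\mathds{S}}_0(H)=(H+z\mathsf{D}^{K\PP^2})I^{K\PP^2}|_{p_0}$ matches $(-3z\mathsf{D}^{K\PP^2}-3H)I^{K\PP^2}|_{p_0}$ directly, so the second line follows after the bookkeeping you describe.

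The gap is in the third identity. The operator in Proposition~\ref{AI} is $9(H+z\mathsf{D}^{K\PP^2}+\tfrac{z}{3})(H+z\mathsf{D}^{K\PP^2})$, whereas iterating the Birkhoff relations gives
\[
C_1^{K\PP^2}C_2^{K\PP^2}\,\overline{\mathds{S}}_0(H^2)
=(H+z\mathsf{D}^{K\PP^2})^2 I^{K\PP^2}|_{p_0}
-zC_1^{K\PP^2}X^{K\PP^2}\,\overline{\mathds{S}}_0(H)\, ,
\]
with $X^{K\PP^2}=\dfrac{\mathsf{D}^{K\PP^2}C_1^{K\PP^2}}{C_1^{K\PP^2}}$. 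Comparing, the two expressions differ by an extra term $9zC_1^{K\PP^2}\!\left(X^{K\PP^2}+\tfrac{1}{3}\right)\overline{\mathds{S}}_0(H)$, which does \emph{not} vanish for general $q$; no amount of commuting $H$ with $z\mathsf{D}^{K\PP^2}$ or invoking $(C_1^{K\PP^2})^2C_2^{K\PP^2}=-(L^{K\PP^2})^3$ removes it. So the straightforward bookkeeping you anticipate does not close up.

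The paper resolves this by observing that the Corollary is only used at $q=\infty$ (to extract the constant terms $a_{ik}$ in Lemma~\ref{CI}), and at that point the extra term \emph{does} vanish: the analytic continuation of the hypergeometric series $C_1^{K\PP^2}=\sum_{d\ge 0}\tfrac{(3d)!}{(d!)^3}(-q)^d$ behaves like a constant times $q^{-1/3}$ as $q\to\infty$, whence $X^{K\PP^2}\to-\tfrac{1}{3}$ there. This analytic continuation step is the missing ingredient in your proposal.
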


\begin{proof}
In \cite{LP}, the evaluation of $\overline{\mathds{S}}_j(H^i)$ was obtained from $I^{K\PP^2}$ via Birkhoff factorization:
\begin{align}\label{SKP2}
    \nonumber\overline{\mathds{S}}_j(1)&= I^{K\PP^2}|_{p_j}\,,\\
    \overline{\mathds{S}}_j(H)&=\frac{(H+z\mathsf{D}^{K\PP^2})\overline{\mathds{S}}_j(1)}{C^{K\PP^2}_1}\,,\\
    \nonumber\overline{\mathds{S}}_j(H^2)&=\frac{(H+z\mathsf{D}^{K\PP^2})\overline{\mathds{S}}_j(H)}{C^{K\PP^2}_2}\,.
\end{align}
First two equations in the Corollary follow immediately from Proposition \ref{AI} using \eqref{ASKP2} and \eqref{SKP2}. The last equation in the Corollary requires further explanation.
\begin{eqnarray}\label{KP2H2}
    C_2^{K\PP^2}\overline{\mathds{S}}_j(H^2)&=&(H+z\mathsf{D}^{K\PP^2})\overline{\mathds{S}}(H)\\
    \nonumber
    &=&(H+z\mathsf{D}^{K\PP^2})(C_1^{K\PP^2})^{-1}(H+z\mathsf{D}^{K\PP^2})I^{K\PP^2}|_{p_j}\,.
\end{eqnarray}
In particular,
$$C_2^{K\PP^2}\overline{\mathds{S}}_j(H^2)=-z\frac{\mathsf{D}^{K\PP^2} C_1^{K\PP^2}}{(C_1^{K\PP^2})^2}(H+z\mathsf{D}^{K\PP^2})I^{K\PP^2}+\frac{1}{C_1^{K\PP^2}}(H+z\mathsf{D}^{K\PP^2})^2I^{K\PP^2}\Big|_{p_j}\, .$$
The analytic continuation of the hypergeometric series

$$C_1^{K\PP^2}=\sum_{d=0}^\infty\frac{(3d)!}{(d!)^3}(-q)^d$$
gives (see Appendix A of \cite{Ho})

$$C_1^{K\PP^2}=\frac{1}{3}\sum_{d=0}^\infty\frac{(-1)^d}{d!}\frac{\Gamma(\frac{1}{3}+\frac{n}{3})}{\Gamma(\frac{2}{3}-\frac{n}{3})^2}q^{-\frac{1}{3}-\frac{n}{3}}=\frac{1}{3}\frac{\Gamma(\frac{1}{3})}{\Gamma(\frac{2}{3})^2}q^{-\frac{1}{3}}+\mathcal{O}(q^{-\frac{1}{3}})\,\,\,\,\text{as}\,\,q\rightarrow \infty\,.$$

Since we are interested in the asymptotic expansion of $C_2^{K\PP^2}\overline{\mathds{S}}_j(H^2)$ at $q=\infty$, we can replace
$$\frac{\mathsf{D}^{K\PP^2}C_1^{K\PP2}}{C_1^{K\PP2}}\,\,\,\text{with}\,\,-\frac{1}{3}$$
and consider instead of \eqref{KP2H2} the function
$$\frac{1}{C_1^{K\PP^2}}(H+z\mathsf{D}^{K\PP^2}+\frac{z}{3})(H+z\mathsf{D}^{K\PP^2})I^{K\PP^2}\Big|_{p_j}\,.$$
The last equation of Corollary then follows from Proposition \ref{AI} using \eqref{ASKP2}.
\end{proof}

\subsection{Proof of Lemma \ref{CI}.}

We will obtain the identities of Lemma \ref{CI} by computing the analytic continuation of $$\text{Asym}_{\text{cr}}(e^{\frac{F_\lambda}{z}} g\omega)\ \ \text{as}\ q\rightarrow \infty\, .$$ Note that $L=0$ at $q=\infty$. The Landau-Ginzburg potential $F_\lambda$ near $q=\infty$ is mirror to $[\CC^3/\ZZ_3]$. Thus, $\text{Asym}_{\text{cr}}(e^{\frac{F_\lambda}{z}} g\omega)$ near $q=\infty$ can be computed in terms of the equivariant $I$-function of $[\CC^3/\ZZ^3]$, again by \cite[Proposition 6.9]{CCIT2}.

It is instructive to evaluate directly the oscillatory integral at $q=\infty$. The key ingredient is the Stirling approximation for the $\Gamma$-function:
$$\text{log}\, \Gamma(h+x) \sim \left(x+h-\frac{1}{2}\right) \text{log}\,x-x+\frac{1}{2}\text{log}\,(2\pi)+\sum_{k=1}^\infty \frac{(-1)^{k+1}B_{k+1}(h)}{k(k+1)x^k}\,.$$
We have
\begin{eqnarray*}
    \int_\Gamma e^{F_\lambda/z}\omega \big|_{q=\infty}&=&\int_\Gamma e^{\sum_{i=0}^2(w_i+\lambda_i \text{log}\,w_i)/z}\frac{1}{3}\frac{dw_0dw_1dw_2}{w_0w_1w_2}
    \\
    &=&\frac{1}{3}\prod_{i=0}^2\Gamma\left(\frac{\lambda_i}{z}\right)(-z)^{\lambda_i/z}\,.
\end{eqnarray*}
Using the Stirling approximation, we obtain
\begin{align*}
    \text{Asym}_{\text{cr}}(e^{F_\lambda/z}\omega)\big|_{q=\infty}&=\frac{1}{3\sqrt{-\lambda_0\lambda_1\lambda_2}}\text{Exp}\,\left(\sum_{i=0}^2\sum_{k=1}^\infty\frac{(-1)^{k+1}B_k(0)}{k(k+1)\lambda_i^k}z^k\right)\\
    &=\frac{1}{3\sqrt{-1}}\text{Exp}\,\left(3\sum_{k=1}^\infty \frac{(-1)^{3k+1}B_{3k}(0)}{3k(3k+1)}z^{3k}\right)\,.
\end{align*}
This, together with Corollary \ref{Asm}, immediately gives the first identity in Lemma \ref{CI}. We also have
\begin{align*}
    \int_{\Gamma}e^{F_\lambda/z}\frac{w_3}{L} \omega\big|_{q=0}&=\int_\Gamma e^{\sum_{i=0}^2(w_i+\lambda_i \text{log}\,w_i)/z}(w_0w_1w_2)^{1/3}\frac{dw_0dw_1dw_2}{w_0w_1w_2}\\
    &=-z\prod_{i=0}^2\Gamma\left(\frac{1}{3}+\frac{\lambda_i}{z}\right)(-z)^{\lambda_i/z}\,.
\end{align*}
Again by the Stirling approximation, we obtain
\begin{align*}
    \text{Asym}_{\text{cr}}\left(e^{F_\lambda/z}\frac{w_3}{L}\omega\right)\big|_{q=\infty}&=\frac{-(\lambda_0\lambda_1\lambda_2)^{1/3}}{\sqrt{-\lambda_0\lambda_1\lambda_2}}\text{Exp}\,\left(\sum_{i=0}^2\sum_{k=1}^\infty \frac{(-1)^{k+1}B_{k+1}(\frac{1}{3})}{k(k+1)\lambda_i^k}z^k\right)\\
    &=\frac{-1}{\sqrt{-1}}\text{Exp}\left(3\sum_{k=1}^\infty \frac{(-1)^{3k+1}B_{3k+1}(\frac{1}{3})}{3k(k+1)}z^{3k}\right)
\end{align*}
which, together with Corollary \ref{Asm}, gives the second identity in Lemma \ref{CI}. Similarly, we also have
\begin{align*}
    \text{Asym}_{\text{cr}}\left(e^{F_\lambda/z}\frac{w_3^2}{L^2}\omega\right)\big|_{q=\infty}
    &=\frac{3}{\sqrt{-1}}\text{Exp}\left(3\sum_{k=1}^\infty \frac{(-1)^{3k+1}B_{3k+1}(\frac{2}{3})}{3k(k+1)}z^{3k}\right)
\end{align*}
which, together with Corollary \ref{Asm}, gives the third identity in Lemma \ref{CI}. \qed

\end{document}